\newtheorem{theorem}{Theorem}[section]
\newtheorem{corollary}[theorem]{Corollary}
\newtheorem{lemma}[theorem]{Lemma}
\newtheorem{proposition}[theorem]{Proposition}
\theoremstyle{definition}
\newtheorem{remark}[theorem]{Remark}
\newcommand{\R}{\mathbb{R}}
\newcommand{\Z}{\mathbb{Z}}
\newcommand{\N}{\mathbb{N}}
\newcommand{\C}{\mathbb{C}}
\author[Akansha Sanwal]{Akansha Sanwal}
\address{Institut f\"ur Mathematik, Leopold--Franzens Universit\"at Innsbruck,
Technikerstrasse 13, 6020 Innsbruck, Austria}
\email{akansha.sanwal@uibk.ac.at}
\author{Robert Schippa}
\address{Korea Institute of Advanced Study, Hoegi-ro 85, Dongdaemun-gu, 02455 Seoul, Republic of Korea}
\email{rschippa@kias.re.kr}
\begin{document}
		\title[Fractional KP-I equations]{Low regularity well-posedness for KP-I equations: the dispersion-generalized case}
		\maketitle

		\begin{abstract}
			 We prove new well-posedness results for dispersion-generalized Kadomtsev--Petviashvili I equations in $\R^2$, which family links the classical KP-I equation with the fifth order KP-I equation. For strong enough dispersion, we show global well-posedness in $L^2(\R^2)$. To this end, we combine resonance and transversality considerations with Strichartz estimates and a nonlinear Loomis--Whitney inequality. Moreover, we prove that for small dispersion, the equations cannot be solved via Picard iteration. In this case, we use an additional frequency dependent time localization.
		\end{abstract}
	
\section{Introduction and main results}	
We consider the Cauchy problem for the fractional Kadomtsev--Petviashvili I (fKP-I) equation
		\begin{equation}
			\label{eq:fKPI}
			\left\{ \begin{array}{cl}
				\partial_t u - D^{\alpha}_x\partial_{x} u - \partial_{x}^{-1} \partial_y^2 u &= u \partial_x u, \quad (t,x,y) \in \R\times \R \times \R, \\ 
				u(0) &= u_0 \in H^{s_1,s_2}(\R^2),
			\end{array} \right.
		\end{equation}
	where $2<\alpha<4$, and the operator $D_x^{\alpha}$ is given by $(D_x^{\alpha}f)^{\wedge}(\xi) = |\xi|^{\alpha}\hat{f}(\xi)$. For $2 < \alpha \leqslant \frac{5}{2}$, we only consider real-valued solutions; for $\alpha > \frac{5}{2}$ we also treat complex-valued solutions. Note that the solution stays real-valued provided that the initial data is real-valued. In this paper, we consider initial data from anisotropic Sobolev spaces $H^{s_1,s_2}(\R^2)$, which are defined by
	\begin{equation*}
	\begin{split}
		H^{s_1,s_2}(\R^2)&:=\{\phi \in L^2(\R^2): \|\phi\|_{H^{s_1,s_2}(\R^2)} < \infty \}, \\
		 \|\phi\|_{H^{s_1,s_2}(\R^2)}  &= \| \hat{\phi}(\xi,\eta)(1+|\xi|^2)^{\frac{s_1}{2}} (1+|\eta|^2)^{\frac{s_2}{2}} \|_{L_{\xi,\eta}^2}.
	\end{split}
	\end{equation*}
The following quantities are conserved for real-valued solutions:
\begin{align}
\label{eq:MassConservation}
		M(u)(t) &= \int_{\R^2} u(x,y)^2 dxdy, \\
\label{eq:EnergyConservation}
	E_{\alpha}(u)(t) &=\int_{\R^2} \Big ( \frac{1}{2} |D_x^{\frac{\alpha}{2}} u|^2 + \frac{1}{2} |\partial_x^{-1}\partial_y u|^2 + \frac{1}{6}u^3\Big)dxdy.
\end{align}
Hence, the natural energy space is given by
\begin{equation*}
\mathbf{E}^{\alpha}(\R^2)= \{\phi \in L^2(\R^2): \|\phi\|_{E^{\alpha}(\R^2)}:= \|p(\xi,\eta)\hat{\phi}(\xi,\eta)\|_{L^2_{\xi,\eta}} <\infty \},
\end{equation*}
where 
\begin{equation*}
	p(\xi,\eta):= 1+ |\xi|^{\frac{\alpha}{2}} +\frac{|\eta|}{|\xi|}.
\end{equation*}	
We prefer to study the solutions in the scale of anisotropic Sobolev spaces. We believe that adapting the present analysis will yield global well-posedness in the energy space, which is a smaller space, as well. Here we focus on the much larger anisotropic Sobolev spaces. For further remarks on the connection between Sobolev spaces and the energy spaces, we refer to \cite{Tom1996}.

Moreover, if $u$ solves the problem \eqref{eq:fKPI} with initial data $\phi$, then $u_{\lambda}$ given by
\begin{equation*}
	u_{\lambda}(t,x,y) = \lambda^{-\alpha} u(\lambda^{-(\alpha+1)}t, \lambda^{-1} x, \lambda^{-\frac{\alpha+2}{2}} y )
\end{equation*}
also solves the same with scaled initial data
\begin{equation}\label{eq:ScalingfKPInitialData}
	\phi_{\lambda} = \lambda^{-\alpha}\phi(\lambda^{-1} x, \lambda^{-\frac{\alpha+2}{2}} y).
	\end{equation}
We have
\begin{equation}\label{eq:SubcriticalInitialDataFKP}
	\|\phi_{\lambda}\|_{\dot{H}^{s_1,s_2}(\R^2)} = \lambda^{-\frac{3\alpha}{4}+1-s_1-(\frac{\alpha}{2}+1)s_2} \|\phi\|_{\dot{H}^{s_1,s_2}(\R^2)}.
\end{equation}
This shows that for $\alpha=\frac{4}{3}$, \eqref{eq:fKPI} is $L^2$-critical. For $\alpha > 2$, which is considered presently, \eqref{eq:fKPI} is $L^2$-subcritical.

 By local well-posedness, we refer to existence, uniqueness, and continuity of the data-to-solution mapping locally in time.
	
The range of dispersion considered in this paper starts with the classical KP-I equation
\begin{equation}
\label{eq:KPI}
\left\{ \begin{array}{cl}
\partial_t u - D^{2}_x\partial_{x} u - \partial_{x}^{-1} \partial_y^2 u &= u \partial_x u, \quad (t,x,y) \in \R\times \R \times \R, \\ 
				u(0) &= u_0 \in H^{s_1,s_2}(\R^2),
\end{array} \right.
\end{equation}	
which has been extensively studied (see \cite{MolinetSautTzvetkov2002,IKT,Guoetal} and references therein). Ionescu--Kenig--Tataru \cite{IKT} proved global well-posedness in the energy space, and Z. Guo \emph{et al.} \cite{Guoetal} showed improved local well-posedness in the anisotropic Sobolev space $H^{1,0}(\R^2)$. The derivative loss in case of unfavorable resonance  makes the equation \emph{quasilinear}. This means it is not amenable to Picard iteration in standard Sobolev spaces as observed by Molinet--Saut--Tzvetkov \cite{MolinetSautTzvetkov2002}. In the works \cite{IKT,Guoetal}, short-time Fourier restriction was used to overcome the derivative loss in the nonlinearity. We refer to the PhD thesis of the second author for an overview of short-time Fourier restriction \cite{Schippa2019PhDThesis}. Since short-time Fourier restriction also involves energy estimates, the results in \cite{IKT,Guoetal} require real-valued solutions. Likewise, the results we prove for small dispersion require real-valued initial data:
\begin{theorem}
\label{thm:QuasilinearLWP}
	Let $2<\alpha\leqslant \frac{5}{2}$. Then, \eqref{eq:fKPI} is locally well-posed in $H^{s,0}(\R^2)$ for $s>5-2\alpha$ and real-valued initial data.
	\end{theorem}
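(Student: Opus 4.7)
My plan is to adapt the short-time Fourier restriction framework used for classical KP-I by Ionescu--Kenig--Tataru \cite{IKT} and by Z.~Guo \emph{et al.} \cite{Guoetal}, with an additional frequency-dependent time localization as signalled in the abstract. Since Picard iteration fails in the quasilinear regime $2 < \alpha \leqslant 5/2$, I would set up three families of dyadically localized spaces: a solution space $F^s$, a nonlinearity space $N^s$, and an energy space $E^s$. The pieces $F_N$, $N_N$ at frequency $|\xi| \sim N$ would be $X^{s,b}$-type (or $U^p$/$V^p$-type) spaces adapted to the fKP-I propagator but restricted to time intervals of length $T(N) = N^{-\beta(\alpha)}$, with $\beta(\alpha) > 0$ chosen so that the modulation produced by a binary interaction at frequency $N$ is effectively constant on these intervals.

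The first batch of estimates would be the standard short-time trio: an embedding $F^s \hookrightarrow C([-T,T]; H^{s,0})$; a solvability bound $\|u\|_{F^s} \lesssim \|u\|_{E^s} + \|(\partial_t - L) u\|_{N^s}$ with $L = D_x^\alpha \partial_x + \partial_x^{-1}\partial_y^2$; and an energy estimate of the shape $\|u\|_{E^s}^2 \lesssim \|u(0)\|_{H^{s,0}}^2 + \int_0^T \langle u\partial_x u, u\rangle_{H^{s,0}}\, dt$. The dispersive input comes from $L^4_{t,x,y}$ Strichartz estimates for the fKP-I group at frequency $N$ on intervals of length $T(N)$ (derived from stationary phase on the oscillatory integral) together with a nonlinear Loomis--Whitney inequality for three packets supported transversally on the characteristic surface $\tau = -|\xi|^\alpha \xi - \eta^2/\xi$.

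The core of the proof is the trilinear estimate $\|\partial_x(uv)\|_{N^s} \lesssim \|u\|_{F^s}\|v\|_{F^s}$, treated by dyadic case analysis in the $x$-frequencies $(N_1,N_2,N_3)$ and the $y$-frequencies $(L_1,L_2,L_3)$. In transversal regimes the bilinear Strichartz/Loomis--Whitney bound gives a smoothing gain; in near-resonant regimes the gain comes from the short time length $T(N)$. The worst high-high-low contribution to $\int (P_N u)^2 \partial_x P_N u$ in the energy estimate is tamed by a symmetrization/commutator argument that crucially uses real-valuedness of $u$ (and hence the failure of the analysis for complex data in this range). Closing the three estimates by a bootstrap and passing from smooth to rough data by a Bona--Smith approximation then yields local existence, uniqueness, and continuous dependence in $H^{s,0}$.

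The main obstacle is the sharp resonance and transversality bookkeeping that produces the threshold $s > 5 - 2\alpha$. As $\alpha \searrow 2$, the dispersion weakens, shrinking the resonance function and the transversality between interacting wave packets; this forces $T(N)$ to be smaller and introduces derivative losses when reassembling the dyadic pieces. One has to verify that the gain from the nonlinear Loomis--Whitney inequality in the transversal case, combined with the cancellation available in the high-high-low energy estimate, exactly compensates these losses whenever $s > 5 - 2\alpha$, and to choose $\beta(\alpha)$ so that all trilinear and energy bounds close simultaneously in $F^s \cap E^s$.
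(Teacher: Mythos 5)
Your plan follows essentially the same route as the paper: short-time Fourier restriction spaces $F^{s,0}(T)$, $\mathcal{N}^{s,0}(T)$, $E^{s,0}(T)$ with frequency-dependent localization $T(N)=N^{-(5-2\alpha)-\varepsilon}$, the linear/embedding estimates, dyadic bilinear estimates driven by bilinear Strichartz and the nonlinear Loomis--Whitney inequality in the resonant regime, energy estimates with a commutator argument for the problematic interaction (using real-valuedness), and passage to rough data by approximation, which is exactly the structure of Sections 4 and 5. The only slips are cosmetic (the parameters $L_i$ are modulations, not $y$-frequencies, and the threshold in the energy estimate is $6-\tfrac{11\alpha}{4}$ rather than $5-2\alpha$, the latter arising from the difference estimate), so the proposal is a correct outline of the paper's argument.
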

	We give a technically more detailed version of the above theorem in Section \ref{section:Quasilinear}.\\
	
	However, the data-to-solution mapping constructed in the proof of Theorem \ref{thm:QuasilinearLWP} is not analytic. Indeed, we show that for $\alpha<\frac{7}{3}$, the data-to-solution mapping cannot be of class $C^2$. Previously, Molinet--Saut--Tzvetkov \cite{MolinetSautTzvetkov2002} showed that the data-to-solution mapping cannot be $C^2$ for the KP-I equation (see also \cite{KochTzvetkov2008}). This result was generalized by Linares--Pilod--Saut \cite{LPS} for $\alpha < 2$. It turns out that the argument extends to $\alpha < \frac{7}{3}$:
\begin{theorem}\label{thm:IllPosed}
	Let $\alpha <\frac{7}{3}$,  $(s_1,s_2) \in \R^2$. Then, there exists no $T>0$ such that there is a function space $X_T \hookrightarrow C([-T,T];H^{s_1,s_2}(\R^2))$, in which \eqref{eq:fKPI} admits a unique local solution such that the flow-map for \eqref{eq:fKPI} given by
	\begin{equation*}
		\Gamma_t: u_0 \mapsto u(t), ~~~ t\in [-T,T],
	\end{equation*}
	is $C^2$-differentiable at zero from $H^{s_1,s_2}(\R^2)$ to $H^{s_1,s_2}(\R^2)$. 
\end{theorem}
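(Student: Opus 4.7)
The plan is to carry out the standard Bourgain-type argument for failure of $C^2$-regularity at zero, following \cite{MolinetSautTzvetkov2002,KochTzvetkov2008,LPS}. Suppose for contradiction that, for some $T>0$, the flow map $\Gamma_T$ is $C^2$-differentiable at $0$ from $H^{s_1,s_2}(\R^2)$ to itself. Expanding the Duhamel representation
$$u(t) = S(t)u_0 - \tfrac{1}{2}\int_0^t S(t-t')\partial_x\bigl(u(t')\bigr)^2\,dt'$$
to second order in $u_0$---where $S(t)$ is the linear propagator of \eqref{eq:fKPI}, with Fourier symbol $e^{it\omega(\xi,\eta)}$, $\omega(\xi,\eta) = \xi|\xi|^\alpha+\eta^2/\xi$---forces the bilinear estimate
$$\Big\|\int_0^T S(T-t')\,\partial_x\bigl(S(t')u_0\bigr)^2\,dt'\Big\|_{H^{s_1,s_2}} \lesssim_T \|u_0\|_{H^{s_1,s_2}}^2.$$
I will exhibit a family of frequency-localized initial data $u_0^{(N)}$ for which this estimate fails as $N\to\infty$.

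On the Fourier side, writing $\xi_2 = \xi-\xi_1$ and $\eta_2 = \eta-\eta_1$, the estimate is controlled by the resonance function
$$\Omega = \bigl[(\xi_1+\xi_2)|\xi_1+\xi_2|^\alpha - \xi_1|\xi_1|^\alpha - \xi_2|\xi_2|^\alpha\bigr] - \frac{(\xi_2\eta_1-\xi_1\eta_2)^2}{\xi_1\xi_2(\xi_1+\xi_2)}.$$
For KP-I these two pieces can take opposite signs (e.g.\ for same-sign $\xi_i$), so the transverse frequencies $\eta_1,\eta_2$ can be tuned to make $\Omega$ vanish on a codimension-one resonant set at any dispersion scale. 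For $N\gg 1$, I take $\hat u_0^{(N)} = \mathbf{1}_{A_1}+\mathbf{1}_{A_2}$ with $A_1,A_2$ narrow boxes of sides $a\sim N^{-\alpha}$ and $b\sim N^{-\alpha/2}$, centered at $(N,\eta^{(1)})$ and $(N,\eta^{(2)})$ with $|\eta^{(1)}-\eta^{(2)}|\sim N^{(\alpha+2)/2}$ solving $\Omega=0$. The derivative bounds $|\partial_{\xi_1}\Omega|\sim N^\alpha$ and $|\partial_{\eta_1}\Omega|\sim N^{\alpha/2}$ ensure that $|T\Omega|\lesssim 1$ throughout $A_1\times A_2$, so the time kernel $(e^{-iT\Omega}-1)/(-i\Omega)$ remains of order $T$ and $|\widehat{u_2(T)}|\sim NT\cdot ab$ on a set of measure $\sim ab$ near output frequency $(2N,\eta^{(1)}+\eta^{(2)})$.

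Computing both sides, the ratio $\|u_2(T)\|_{H^{s_1,s_2}}/\|u_0^{(N)}\|_{H^{s_1,s_2}}^2$ scales as $N^{\gamma(s_1,s_2,\alpha)}$ for an explicit exponent $\gamma$. I use the free parameter $M=\eta^{(1)}+\eta^{(2)}$ to optimize the input-to-output weight ratio, complement with a high-high-to-low configuration ($\xi_1\approx N$, $\xi_2\approx -N+\varepsilon$, $\eta_i$ tuned to a smaller transverse scale) to cover the remaining range of $(s_1,s_2)$, and invoke the scaling symmetry \eqref{eq:ScalingfKPInitialData} to normalize the regularity. The upshot is $\gamma>0$ for all $(s_1,s_2)\in\R^2$ precisely when $\alpha<7/3$, contradicting the bilinear estimate. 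The principal technical obstacle is this last step---ensuring that finitely many resonant configurations yield a $\gamma$ that is positive for every $(s_1,s_2)\in\R^2$; the threshold $\alpha=7/3$ then emerges from balancing the $\partial_x$ loss $+1$, the dispersive gain $-3\alpha/4$ from the volume of the resonant region, and the anisotropic weight mismatch dictated by the KP-I scaling.
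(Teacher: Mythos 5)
Your overall framework --- reducing $C^2$-differentiability at zero to a bilinear bound for the second Picard iterate and disproving it with near-resonant, frequency-localized data --- is the same as the paper's, but the specific configurations you choose cannot give the stated result, and the threshold $7/3$ does not come out of your accounting. In your main (high--high$\to$high) configuration the condition $|T\Omega|\lesssim 1$ forces, exactly as you say, boxes of sides $a\sim N^{-\alpha}$, $b\sim N^{-\alpha/2}$, hence of measure $ab\sim N^{-3\alpha/2}$. But then, already at $s_1=s_2=0$,
\begin{equation*}
\frac{\|u_2(T)\|_{L^2}}{\|u_0^{(N)}\|_{L^2}^2}\;\sim\;\frac{N\,T\,(ab)^{3/2}}{ab}\;\sim\;T\,N^{1-\frac{3\alpha}{4}},
\end{equation*}
which tends to zero for every $\alpha>4/3$; the high--high$\to$low configuration is no better (the derivative loss $N$ is replaced by the low output frequency). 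So in the relevant range $2<\alpha<7/3$ your families show nothing even at zero regularity: the balance ``$+1$ versus $-3\alpha/4$'' vanishes at $\alpha=4/3$, not $7/3$. The missing $+3/4$ in the exponent comes from a much larger resonant product set, which is only available in a high$\times$(very low) interaction. This is what the paper uses: $\hat\phi_1=\gamma^{-3/2}\mathbf 1_{D_1}$ on the low block $[\gamma/2,\gamma]\times[-\sqrt{1+\alpha}\,\gamma^2,\sqrt{1+\alpha}\,\gamma^2]$ and $\hat\phi_2$ on a block of the same measure near $(N,\sqrt{1+\alpha}\,N^{\frac{\alpha+2}{2}})$, cf. \eqref{eq:DefinitionSets}. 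There both $\Omega_\alpha^1$ and $\Omega_\alpha^2$ equal $(\alpha+1)N^\alpha\xi_1$ to leading order (this is the point of the factor $\sqrt{1+\alpha}$, which also makes $\partial_{\xi_1}\Omega_\alpha$ nearly cancel), so $|\Omega_\alpha|\sim N^{\alpha-1}\gamma^2\ll 1$ holds on the entire product set of measure $\gamma^3\gg N^{-3\alpha/2}$ with $\gamma=N^{-\frac{\alpha-1}{2}-\theta}$, and the ratio becomes $N\gamma^{3/2}=N^{\frac74-\frac{3\alpha}{4}-\frac{3\theta}{2}}$, which blows up precisely for $\alpha<7/3$.

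There is a second gap in the claim of covering all $(s_1,s_2)\in\R^2$. Any configuration with both inputs at $|\xi|\sim N$ leaves an uncancelled weight factor in the ratio ($N^{-s_1}$ for high--high$\to$high, $N^{-2s_1}$ up to the output weight for high--high$\to$low, and analogous $s_2$-factors after optimizing $M$), so for $s_1$ (or $s_2$) large and positive the ratio decays no matter what power the resonance analysis produces; and the scaling \eqref{eq:ScalingfKPInitialData} only rescales data and time, so it cannot convert boundedness at a fixed regularity into unboundedness. The paper avoids this with a single weight-neutral configuration: the low factor's anisotropic weight is $O(1)$ for every $(s_1,s_2)$ because its frequencies tend to zero, while the high input and the output sit at essentially the same frequency, so their weights cancel identically (this is the role of the normalization $N^{-s_1-(1+\frac{\alpha}{2})s_2}$ in $\hat\phi_2$). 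Without a high$\times$low building block of this type your argument reaches neither all $(s_1,s_2)$ nor the threshold $\alpha=7/3$.
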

The problematic nonlinear interaction is a resonant $High \times Low$-interaction in which a free solution with high $x$ frequencies interacts with a solution at low $x$ frequencies. With the dispersion relation for the fractional KP-I equation given by
\begin{equation}
	\label{eq:DispersionRelation}
	\omega_\alpha(\xi,\eta) = |\xi|^{\alpha}\xi + \frac{\eta^2}{\xi}, \quad \xi \in \R \backslash \{0\}, \; \eta \in \R,
\end{equation}
we find the resonance function to be
	\begin{equation*}
		\Omega_{\alpha}(\xi_1,\eta_1,\xi_2,\eta_2) = |\xi_1+\xi_2|^{\alpha}(\xi_1+\xi_2)-|\xi_1|^{\alpha}\xi_1 -|\xi_2|^{\alpha}\xi_2 -\frac{(\eta_1\xi_2 - \eta_2\xi_1)^2}{\xi_1 \xi_2 (\xi_1+\xi_2) }.
	\end{equation*}
Due to opposite signs of the terms $|\xi_1+\xi_2|^{\alpha}(\xi_1+\xi_2)-|\xi_1|^{\alpha}\xi_1 -|\xi_2|^{\alpha}\xi_2$ and $\frac{(\eta_1\xi_2 - \eta_2\xi_1)^2}{\xi_1 \xi_2 (\xi_1+\xi_2) }$, the resonance function can become much smaller than the first term, which we refer to as resonant case.
 However, we shall see that in the resonant case, we can argue that the interaction between the two nonlinear waves and the dual factor with low modulation is strongly transverse, which we quantify via a nonlinear Loomis--Whitney inequality. This transversality was already observed in \cite{IKT}, while in the proof in \cite{IKT} this is not related to nonlinear Loomis--Whitney. We believe that pointing out the connection with nonlinear Loomis--Whitney inequalities makes the proof more systematic.
 
 Nonlinear Loomis--Whitney inequalities were first investigated by Bennett--Carbe\-ry--Wright \cite{BennettCarberyWright2005} and quantitative versions suitable for application to PDEs were proved by Bejenaru--Herr--Tataru \cite{BejenaruHerrHolmerTataru2009,BejenaruHerrTataru2010}. These were all local though. We use a global version to simplify the argument, which is a result of Kinoshita and the second author \cite{KS}. We also refer to references in \cite{KS} for further discussion of nonlinear Loomis--Whitney inequalities.
 
The crucial ingredient in the resonant case of low modulation is to use the nonlinear Loomis--Whitney inequality to show a genuinely trilinear estimate, which improves on the bilinear estimate. Let $f_i \in L^2(\R^3;\R_{+})$ denote functions dyadically localized in spatial frequency in the $x$ direction around $N_i \in 2^{\Z}$ and in modulation $L_i \in 2^{\mathbb{N}_0}$ with $N_1 \sim N_2 \geqslant N_3$. With notations explained below, we have $\text{supp}(f_i) \subseteq \tilde{D}_{N_i,L_i}$. Moreover, let $L_i \leqslant N_1^\alpha N_2$. Then, we show the estimate
\begin{equation*}
\int (f_1 * f_2) \cdot f_3 \lesssim N_1^{-\frac{3 \alpha}{4} + \frac{1}{2}} N_3^{-\frac{1}{2}} \prod_{i=1}^3 L_i^{\frac{1}{2}} \| f_i \|_{L^2} .
\end{equation*}
Clearly, for $\alpha \geqslant 2$ and $N_3 \gtrsim 1$, this ameliorates the derivative loss. The observation is that for $N_3 \gtrsim N_1^{-\kappa}$ for some $\kappa > 0$, this estimate still suffices to overcome the derivative loss, whereas for $N_3 \lesssim N_1^{-\kappa}$, the bilinear Strichartz estimate gains additional powers of $N_1^{-1}$. The bilinear Strichartz estimate is another consequence of transversality in case of resonance. It reads for free solutions in the resonant case with $N_1 \gg N_2$:
\begin{equation*}
\| P_{N_1} U_{\alpha}(t) u_0 P_{N_2}U_{\alpha}(t)v_0 \|_{L^2_{t,x}} \lesssim \frac{N_2^{\frac{1}{2}}} {N_1^{\frac{\alpha}{4}}} \|u_0\|_{L^2_x} \|v_0\|_{L^2_x}
\end{equation*}
with $(U_\alpha(t) f)^{\wedge}(\xi,\eta) = e^{i t \omega_\alpha(\xi,\eta) } \hat{f}(\xi,\eta)$.
 
By combining the nonlinear Loomis--Whitney inequality and the bilinear Stri\-chartz estimate, we note that the fractional KP-I equations are semilinearly well-posed for $\alpha>\frac{5}{2}$. In this range we solve the equations by applying the contraction mapping principle in suitable function spaces. This suggests the choice for frequency dependent time localization obtained by interpolating between $(\alpha, T(N)) = (2,N^{-1})$ and $(\alpha, T(N)) = \big(\frac{5}{2}+,1\big)$, which suggests $T(N) = N^{-(2\alpha-5)-\varepsilon}$. We shall choose $\varepsilon = \varepsilon(\alpha)$.
\begin{theorem}
\label{thm:SemilinearLWP}
	Let $\frac{5}{2}<\alpha <4$. Then, \eqref{eq:fKPI} is analytically locally well-posed in $H^{s,0}(\R^2)$ for $s>\frac{5}{4}-\frac{\alpha}{2}$. 
\end{theorem}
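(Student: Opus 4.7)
The plan is to solve \eqref{eq:fKPI} by contraction mapping in short-time Fourier restriction spaces $F^s$ of $X^{s,b}$-type equipped with frequency-dependent time localization $T(N) = N^{-(2\alpha-5)-\varepsilon}$ for dyadic $N\in 2^{\mathbb{N}_0}$, together with dual nonlinearity spaces $N^s$. Writing the Duhamel formula $u(t) = U_\alpha(t)u_0 - \tfrac{1}{2}\int_0^t U_\alpha(t-s)\partial_x(u^2)(s)\,ds$, local well-posedness reduces to three ingredients: the linear embedding $\|U_\alpha(t)u_0\|_{F^s}\lesssim \|u_0\|_{H^{s,0}}$, an energy-type estimate $\|u\|_{L^\infty_t H^{s,0}} \lesssim \|u\|_{F^s} + \|\partial_x(u^2)\|_{N^s}$, and the bilinear nonlinear estimate
\begin{equation*}
\|\partial_x(u_1 u_2)\|_{N^s} \lesssim \|u_1\|_{F^s}\|u_2\|_{F^s}.
\end{equation*}
The first two follow fairly directly from the definition of the spaces; the main work lies in the third.

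For the bilinear estimate I would Littlewood--Paley decompose and, by duality, control trilinear forms $\int (P_{N_1}u_1)(P_{N_2}u_2)(P_{N_3}\psi)\,dt\,dx\,dy$ for dyadic $N_1\geqslant N_2\geqslant N_3$, then split further by the modulation scales $L_i$. In the high-modulation regime $\max L_i \gtrsim N_1^\alpha N_2$ the gain can be extracted directly from the $X^{s,b}$ weights and standard Cauchy--Schwarz/Strichartz bounds suffice. In the low-modulation regime $L_i \lesssim N_1^\alpha N_2$, genuine resonance is possible and the two transversality tools advertised in the introduction are decisive: for $N_3 \gtrsim N_1^{-\kappa}$ (with a small $\kappa>0$) the nonlinear Loomis--Whitney inequality supplies the gain $N_1^{-\frac{3\alpha}{4}+\frac{1}{2}}N_3^{-\frac{1}{2}}\prod_i L_i^{\frac{1}{2}}$, which is effective since $\alpha>\tfrac{5}{2}$; for $N_3\lesssim N_1^{-\kappa}$ the bilinear Strichartz estimate $\|P_{N_1}U_\alpha(t)u_0\,P_{N_2}U_\alpha(t)v_0\|_{L^2_{t,x,y}}\lesssim N_2^{\frac{1}{2}}N_1^{-\frac{\alpha}{4}}\|u_0\|_{L^2}\|v_0\|_{L^2}$ is strong enough to recover any power of $N_1^{-1}$ lost.

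Collecting these bounds, summation over the dyadic blocks with the bookkeeping of the time-localization factor $T(N)^\theta$ (a small positive power obtained by trading $L^\infty_t$ for $L^p_t$ on intervals of length $T(N)$) closes in $H^{s,0}$ precisely when $s>\tfrac{5}{4}-\tfrac{\alpha}{2}$. The exponent $-(2\alpha-5)-\varepsilon$ in $T(N)$ is pinned down by demanding that at this critical regularity the losses from the resonant $High\times Low$ interaction be exactly compensated, in agreement with the interpolation between $(\alpha,T(N))=(2,N^{-1})$ and $(\tfrac{5}{2}+,1)$. Once the bilinear estimate is in hand, the standard fixed-point argument yields existence, uniqueness and analytic dependence on the initial data on a short time interval.

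The principal obstacle is the detailed bookkeeping for the trilinear estimate in the resonant low-modulation $High\times Low\to High$ case: assembling the Loomis--Whitney gain, the bilinear Strichartz gain, the modulation weights, and the factor $T(N)^\theta$ so that they sum over all frequency configurations at exactly the threshold $s>\tfrac{5}{4}-\tfrac{\alpha}{2}$. Secondary tasks are verifying the hypotheses of the nonlinear Loomis--Whitney inequality for the surfaces associated to $\omega_\alpha$ (transversality of three-wave interactions), and checking that the short-time Fourier restriction scheme carries over cleanly when the KP-I choice $T(N)=N^{-1}$ is replaced by $T(N)=N^{-(2\alpha-5)-\varepsilon}$.
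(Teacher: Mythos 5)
Your plan does not match the mechanism of the theorem, and it has a genuine gap at its core. You propose to contract in short-time Fourier restriction spaces with frequency-dependent localization $T(N)=N^{-(2\alpha-5)-\varepsilon}$ and assert that the linear and energy-type ingredients ``follow fairly directly from the definition of the spaces.'' They do not. In the short-time framework the inhomogeneous linear estimate has the form $\|u\|_{F^{s,0}(T)}\lesssim \|u\|_{E^{s,0}(T)}+\|f\|_{\mathcal{N}^{s,0}(T)}$ (Lemma \ref{lemma:LinShortTime}): because the spaces only control the solution on frequency-dependent subintervals, patching the subintervals together forces the energy norm $E^{s,0}$ into the scheme, and that norm must be bounded by a separate, genuinely nonlinear energy estimate (as in Section \ref{section:Quasilinear}). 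There is no bound of the Duhamel term in $F^{s,0}$ by $\|\partial_x(u^2)\|_{\mathcal{N}^{s,0}}$ alone. Consequently your three-ingredient reduction does not close as a contraction; and once the energy estimate is brought in, the argument is no longer a Picard iteration, so it cannot deliver the \emph{analytic} dependence that is the actual content of Theorem \ref{thm:SemilinearLWP} (the energy-estimate route also requires real-valued data, whereas the semilinear result covers complex data). Note also the sign issue: interpolating between $(\alpha,T(N))=(2,N^{-1})$ and $(\tfrac52+,1)$ produces a nontrivial localization only for $\alpha\leqslant \tfrac52$; for $\alpha>\tfrac52$ a shrinking $T(N)$ is not only unnecessary but harmful, since the $High\times High\to Low$ interactions then pick up the factor $(N_{\max}/N_{\min})^{(2\alpha-5)+\varepsilon}$ counting subintervals.

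The paper's proof is structurally different and simpler in this range: for $\alpha>\tfrac52$ one works in standard Bourgain spaces $X^{s,b}$ with some $b>\tfrac12$ and no time-frequency localization at all. The decisive step is the bilinear estimate $\|\partial_x(uv)\|_{X^{s,b-1}}\lesssim \|u\|_{X^{s,b}}\|v\|_{X^{s,b}}$ for $s>\tfrac54-\tfrac{\alpha}{2}$ (Proposition \ref{prop:XsbBilinear}), proved by the dyadic case analysis you correctly anticipate: the nonlinear Loomis--Whitney trilinear bound \eqref{eq:LW} in the resonant low-modulation regime, the bilinear Strichartz estimate \eqref{eq:BilinearToProve} when the low frequency is very small, and linear $L^4$ Strichartz estimates together with \eqref{eq:GuoNonReso} in the high-modulation regime. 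With this estimate, Lemmas \ref{lemma:XsbTimeLocalisedFreeSol} and \ref{lemma:XsbEnergyEst} and the contraction mapping principle give local well-posedness for small data on $[0,1]$, analyticity being automatic from the fixed-point construction and the bilinear nonlinearity; large data are handled by the scaling \eqref{eq:ScalingfKPInitialData} and subcriticality \eqref{eq:SubcriticalInitialDataFKP}. So the correct repair of your outline is to drop the short-time apparatus entirely in this range and carry out your dyadic bookkeeping inside the $X^{s,b}$ duality estimate \eqref{eq:DualXsb}.
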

The analyticity of the data-to-solution mapping is a consequence of applying the contraction mapping principle and the analyticity of the nonlinearity.
By conservation of mass and persistence of regularity, we have the following:
\begin{corollary}
\label{thm:SemilinearGWP}
	Let $s \geqslant 0$, and $\frac{5}{2}<\alpha <4$. Then, \eqref{eq:fKPI} is globally well-posed in $H^{s,0}(\R^2)$ for real-valued initial data.
\end{corollary}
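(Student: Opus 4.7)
The plan is to deduce the corollary from Theorem \ref{thm:SemilinearLWP} by a standard two-step procedure: first establish global well-posedness in $L^2 = H^{0,0}(\R^2)$ by iterating the local theory using the conservation of mass, and then upgrade to $H^{s,0}(\R^2)$ for $s > 0$ via persistence of regularity.

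For the first step I would note that when $\frac{5}{2} < \alpha < 4$, the critical exponent $\frac{5}{4} - \frac{\alpha}{2}$ of Theorem \ref{thm:SemilinearLWP} is strictly negative, so local well-posedness in particular holds in $L^2(\R^2)$. Because Theorem \ref{thm:SemilinearLWP} is obtained by the contraction mapping principle in a resolution space whose construction is scale-invariant in the obvious way, the local time of existence $T_\ast$ for data $u_0 \in L^2(\R^2)$ depends only on $\|u_0\|_{L^2}$; this point should be extracted directly from the fixed-point argument carried out for Theorem \ref{thm:SemilinearLWP}. Given a real-valued $u_0 \in L^2(\R^2)$, I would then use the mass conservation law \eqref{eq:MassConservation}, which applies precisely because the data and hence the solution is real-valued, to conclude $\|u(t)\|_{L^2} = \|u_0\|_{L^2}$ on the interval of existence. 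Iterating the local theory on successive intervals of length $T_\ast(\|u_0\|_{L^2})$, which remains fixed along the flow, yields a global-in-time solution in $C(\R; L^2(\R^2))$.

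For $s > 0$, the plan is to appeal to persistence of regularity. Concretely, given $u_0 \in H^{s,0}(\R^2)$, I would run the local fixed-point argument from Theorem \ref{thm:SemilinearLWP} simultaneously at regularities $0$ and $s$. Standard arguments in the analytic semilinear setting show that the local existence time can be chosen to depend only on $\|u_0\|_{L^2}$, and that on this interval the $H^{s,0}$-norm satisfies a linear a priori bound of the form $\|u\|_{L^\infty_T H^{s,0}} \leqslant C(\|u_0\|_{L^2}) \|u_0\|_{H^{s,0}}$. Combined with mass conservation, this allows one to iterate the argument on successive intervals of length $T_\ast(\|u_0\|_{L^2})$ and obtain at most an exponential-in-time bound on $\|u(t)\|_{H^{s,0}}$, which is sufficient for global existence.

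The only genuine obstacle I anticipate is the bookkeeping in the previous two paragraphs: namely, verifying from the explicit function spaces constructed in Section \ref{section:Quasilinear} (and its analytic analogue used for Theorem \ref{thm:SemilinearLWP}) that (i) the local time of existence really depends only on $\|u_0\|_{L^2}$, and (ii) the bilinear/trilinear estimates underlying Theorem \ref{thm:SemilinearLWP} propagate higher regularity with a norm-linear estimate in the $H^{s,0}$-norm. Both are standard features of contraction-mapping based proofs once one inspects the multilinear estimates used, so no additional analytic input beyond Theorem \ref{thm:SemilinearLWP} and the mass conservation law \eqref{eq:MassConservation} is needed.
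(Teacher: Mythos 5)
Your proposal is correct and takes essentially the same route as the paper: the paper likewise rescales to small $L^2$ data (using $L^2$-subcriticality), iterates the local theory on unit time intervals using conservation of mass for real-valued solutions, and propagates the $H^{s,0}$ norm with an exponential-in-time bound via the mixed-regularity bilinear estimate $\|\partial_x(uv)\|_{X^{s,b-1}} \lesssim \|u\|_{X^{0,b}}\|v\|_{X^{s,b}}$ of Proposition \ref{prop:XsbBilinear}. The only cosmetic difference is that the paper applies this estimate to the difference of two solutions, so persistence of regularity and global continuous dependence are obtained in one stroke, which is precisely the bookkeeping you deferred to ``standard arguments''.
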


We remark that it was well-known that the fifth order KP-I equation
\begin{equation}
\label{eq:FifthKPI}
\left\{ \begin{array}{cl}
\partial_t u - D^{4}_x\partial_{x} u - \partial_{x}^{-1} \partial_y^2 u &= u \partial_x u, \quad (t,x,y) \in \R\times \R \times \R, \\ 
				u(0) &= u_0 \in H^{s_1,s_2}(\R^2)
\end{array} \right.
\end{equation}
can be solved via Picard iteration as pointed out by Saut--Tzvetkov \cite{SautTzvetkov1999,SautTzvetkov2000}. Their result was improved by B. Guo \emph{et al.} \cite{GuoHuoFang2017} using short-time Fourier restriction and Yan \emph{et al.} \cite{YanLiHuangDuan} (see also \cite{LX} for an earlier result) recovered the same local well-posedness result without using frequency dependent time localization.

In the limiting cases of $\alpha$ presently considered, we recover the currently best local well-posedness results in anisotropic Sobolev spaces. For $\alpha \downarrow 2$ we recover the result from \cite{Guoetal} and for $\alpha \uparrow 4$ we arrive at the result from \cite{YanLiHuangDuan}. We note that there is still a mismatch between the range of dispersion, for which we can show failure of Picard iteration and for which we actually use frequency-dependent time localization. It is unclear whether one has to improve the counterexample or the argument to show semilinear local well-posedness.

Moreover, in the companion paper \cite{HerrSanwalSchippa2022}, we consider the dispersion-generalized KP-I equation ($2<\alpha<4$) in three dimensions in non-periodic, periodic, and mixed settings.

\subsubsection{Organization}

In Section \ref{section:Notation}, we introduce the notation and function spaces. For the proof of Theorem \ref{thm:QuasilinearLWP}, we use short-time Fourier restriction spaces introduced by Ionescu--Kenig--Tataru \cite{IKT} and for the proof of Theorem \ref{thm:SemilinearLWP}, we use standard Fourier restriction spaces (cf. \cite{Bourgain1993}). We also recall linear Strichartz estimates. In Section \ref{section:Illposedness}, we show that the data-to-solution mapping fails to be $C^2$ for $\alpha< 7/3$ as stated in Theorem \ref{thm:IllPosed}. In Section \ref{section:Resonance}, we quantify the transversality in case of resonant interaction. This allows for the proof of bilinear Strichartz estimates and a trilinear estimate based on the nonlinear Loomis--Whitney inequality. In Section \ref{section:Quasilinear}, we prove Theorem \ref{thm:QuasilinearLWP} by showing short-time nonlinear estimates and energy estimates in short-time function spaces. In Section \ref{section:Semilinear}, we show Theorem \ref{thm:SemilinearLWP}.
In the Appendix, we provide details of the proof of the trilinear estimate as a consequence of the nonlinear Loomis--Whitney inequality.

\section{Notation and function spaces}
\label{section:Notation}
We use $a\pm$ to denote $a \pm\epsilon$ for $\epsilon >0$ sufficiently small. Also, we use notation $A \lesssim B$ for $A \leqslant C B$ with $C$ a harmless constant, which is allowed to change from line to line. Dyadic numbers are denoted by capital letters $N,L,\ldots \in 2^{\mathbb{Z}}$.
\subsection{Fourier transform}
Spatial variables are denoted by $(x,y) \in \R^2$, and the time variable by $t \in \R$. The corresponding Fourier variables are denoted by $(\xi,\eta) \in \R^2$ and $\tau$, respectively.
We use the following convention for the space-time Fourier transform:
\begin{equation*}
\hat{u}(\tau,\xi,\eta) = (\mathcal{F}_{t,x,y} u)(\tau,\xi,\eta) = \int_{\R^3} e^{-i(t \tau + x \xi + y \eta)} u(t,x,y) dt dx dy.
\end{equation*}
We shall also use notation $\hat{u} = \mathcal{F}_{x,y} u$ for the purely spatial Fourier transform, which should be clear from context.
The Fourier transform is inverted by
\begin{equation*}
u(t,x,y) = \frac{1}{(2 \pi)^3} \int_{\R^3} e^{i(t \tau + x \xi + y \eta)} \hat{u}(\tau,\xi,\eta) d\tau d\xi d\eta.
\end{equation*}

\subsection{Function spaces}
We introduce the short-time $X^{s,b}$ spaces now and state their properties. The proofs of the forthcoming results can be found in \cite{IKT}, and we refer to \cite[Section 2.5]{Schippa2019PhDThesis} for an overview of the properties.

Let $\phi_1 \in C^\infty_c(-2,2)$ be symmetric and decreasing on $[0,\infty)$ with $\phi_1(\xi) = 1$ for $\xi \in [-1,1]$.	For $N \in 2^{\N}$, let $\phi_N(\xi) = \phi_1(\xi/N) - \phi_1(2\xi/N)$. We have
\begin{equation*}
	\phi_1(\xi)+ \sum_{N \geqslant 2} \phi_N(\xi) \equiv 1.
\end{equation*}
Let $\N_0:=\N \cup \{0\}$. We define Littlewood-Paley projections: For $f \in \mathcal{S}'(\R^d)$ and $N \in 2^{\N_0}$, let
\begin{equation*}
	(P_{N} f)^{\wedge} (\xi,\eta) = \phi_N(\xi) \hat{f}(\xi,\eta).
\end{equation*}
For $N \in 2^{\mathbb{N}}$, let
\begin{equation*}
	\begin{split}
		A_N &= \Big\{ (\xi,\eta) \in \R^2 : \frac{N}{8} \leqslant |\xi| \leqslant 8N \Big\}, \\
	\end{split}
\end{equation*}
with the obvious modification for $A_1$. Moreover, for $N \in 2^{\Z}$, we let
\begin{equation*}
 \tilde{A}_N = \Big\{ (\xi,\eta) \in \R^2 : \frac{N}{8} \leqslant |\xi| \leqslant 8N \Big\}.
\end{equation*}
 Additionally, for $N \in 2^{\N_0}$, $L \in 2^{\N}$, we define
\begin{equation*}
\begin{split}
		D_{N,L} &= \Big\{ (\tau,\xi,\eta) \in \R \times \R\times \R : (\xi,\eta) \in A_N, \, \frac{L}{4} \leqslant |\tau - \omega_{\alpha}(\xi,\eta)| \leqslant 4L \Big\}, \\
		 D_{N,\leqslant L} &= \bigcup_{L'=1}^L D_{N,L'}, \\
		D_{N,1} &= \{ (\tau,\xi,\eta) \in \R \times \R\times \R : (\xi,\eta) \in A_N, \, |\tau - \omega_{\alpha}(\xi,\eta)| \leqslant 2 \}. 
\end{split}
\end{equation*}
For $N\in 2^{\Z}$, $L \in 2^{\mathbb{N}}$, we define
\begin{equation*}
\tilde{D}_{N,L} = \{ (\tau,\xi,\eta) \in \R \times \R\times \R : (\xi,\eta) \in \tilde{A}_N, \, \frac{L}{4} \leqslant |\tau - \omega_{\alpha}(\xi,\eta)| \leqslant 4L \}
\end{equation*}
with the obvious modification for $L=1$.

In the following we write for notational convenience, in order to distinguish modulation and spatial frequencies, $\eta_L(\tau) = \phi_L(\tau)$ for $L \in 2^{\N_0}$, and $\eta_{\leqslant L}(\tau) = \sum_{L'\in 2^{\N_0} \cap [1,L]} \eta_{L'} (\tau)$. We let
\begin{equation*}
	X_N = \{ f \in L^2(\R \times \R^2) : f \text{ is supported in } \R \times A_N, \; \| f \|_{X_N} < \infty \},
\end{equation*}
and
\begin{equation*}
	\| f \|_{X_N} = \sum_{L \in 2^{\N_0}} L^{\frac{1}{2}} \| \eta_L(\tau - \omega_{\alpha}(\xi,\eta))f \|_{L^2_\tau L^2_{\xi,\eta}}.
\end{equation*}
Note that
\begin{equation*}
	\Big\| \int_{\R} | f(\tau,\xi,\eta)| d\tau \Big\|_{L^2_{\xi,\eta}} \lesssim \| f \|_{X_{N}},
\end{equation*}
and we record the estimate
\begin{equation}\label{prop}
	\begin{split}
		&\sum_{L' \geqslant L} L'^{\frac{1}{2}} \Big\| \eta_{L'}(\tau-\omega_{\alpha}(\xi,\eta)) \int |f(\tau',\xi,\eta)| L'^{-1} (1+L'^{-1} | \tau - \tau'|)^{-4} d \tau' \Big\|_{L_{\tau,\xi,\eta}^2} \\
		&\quad + L^{\frac{1}{2}} \Big\| \eta_{\leqslant L}(\tau - \omega_{\alpha}(\xi,\eta)) \int |f(\tau',\xi,\eta)| L^{-1} (1+L^{-1} |\tau - \tau'|)^{-4} d\tau' \Big\|_{L_{\tau,\xi,\eta}^2} \lesssim \| f \|_{X_N}.
	\end{split}
\end{equation}
We find for Schwartz functions $\gamma \in \mathcal{S}(\R)$, $M,N \in 2^{\N_0}$, $t_0 \in \R$, $f \in X_N$, the estimate
\begin{equation*}
	\| \mathcal{F}_{t,x,y} [\gamma(M (t - t_0)) \mathcal{F}^{-1}_{t,x,y}(f)] \|_{X_N} \lesssim_\gamma \| f \|_{X_N}.
\end{equation*}
We define
\begin{equation*}
	E_N = \{ \phi : \R^2 \to \R \, : \, \hat{\phi} \text{ is supported in } A_N, \; \| \phi \|_{E_N} = \| \phi \|_{L^2} < \infty \}.
\end{equation*}
For $\alpha \in (2,5/2]$ and dyadic frequency $N \in 2^{\mathbb{N}_0}$, we choose the time localization as $N^{-(5-2\alpha)-\varepsilon(\alpha)}$. Next, define
\begin{equation*}
\begin{split}
	&\; F_N = \{ u_N \in C(\R;E_N) \, : \, \\
	&\qquad \quad \| u_N \|_{F_N} = \sup_{t_N \in \R} \| \mathcal{F}_{t,x,y}[u_N \cdot \eta_0(N^{(5-2\alpha)+\varepsilon}(t-t_N))] \|_{X_N} < \infty \}.
	\end{split}
\end{equation*}
The dependence on $\alpha$ is suppressed. We place the solution into these short-time function spaces after dyadic frequency localization. For the nonlinearity, we consider correspondingly
\begin{equation*}
\begin{split}
	\mathcal{N}_N &= \{ u_N \in C(\R;E_N) \, : \, \| u_N \|_{\mathcal{N}_N} = \sup_{t_N \in \R} \| (\tau- \omega_{\alpha}(\xi,\eta) + i N^{(5-2 \alpha)+\varepsilon})^{-1} \\
	&\quad \times \mathcal{F}_{t,x,y} [u_N \cdot \eta_0(N^{(5-2\alpha)+\varepsilon}(t-t_N))] \|_{X_N} < \infty\}.
\end{split}
\end{equation*}
We localize the spaces in time by the usual means: For $T \in (0,1]$, let
\begin{equation*}
	\begin{split}
		F_N(T) &= \{ u_N \in C([-T,T];E_N) \, : \, \| u_N \|_{F_N(T)} = \inf_{\substack{\tilde{u}_N = u_N \text{ in } \\
				 [-T,T] \times \R^2}} \| \tilde{u}_N \|_{F_N} < \infty \}, \\
		\mathcal{N}_N(T) &= \{ u_N \in C([-T,T];E_N) \, : \, \| u_N \|_{\mathcal{N}_N(T)} = \inf_{\substack{\tilde{u}_N = u_N \text{ in } \\  [-T,T] \times \R^2 }} \| \tilde{u}_N \|_{\mathcal{N}_N} < \infty \}.
	\end{split}
\end{equation*}
Let $H^{\infty,0}(\R^2) = \bigcap_{s\geqslant 0} H^{s,0}(\R^2)$. We assemble the spaces $F^{s,0}(T)$, $\mathcal{N}^{s,0}(T)$, and $E^{s,0}(T)$ via Littlewood-Paley decomposition:
\begin{equation*}
	\begin{split}
		F^{s,0}(T) &= \{ u \in C([-T,T];H^{\infty,0}(\R^2)) \, : \, \| u \|^2_{F^{s,0}(T)} = \sum_{N \in 2^{\N_0}} N^{2s} \| P_N  u \|^2_{F_N(T)} < \infty \}, \\
		\mathcal{N}^{s,0}(T) &= \{ u \in C([-T,T];H^{\infty,0}(\R^2)) \, : \, \| u \|^2_{\mathcal{N}^{s,0}(T)} = \sum_{N \in 2^{\N_0}} N^{2s} \| P_N u \|^2_{\mathcal{N}_N(T)} < \infty \}, \\
		E^{s,0}(T) &= \{ u \in C([-T,T];H^{\infty,0}(\R^2)) \, :\\
		&\hspace{2cm} \, \| u \|^2_{E^{s,0}(T)} = \sum_{N \in 2^{\N_0}} N^{2s} \sup_{t\in [-T,T]} \| P_N  u(t) \|^2_{E_N(T)} < \infty \}. \\
	\end{split}
\end{equation*}

We state the multiplier properties of admissible time-multiplication. For $N \in 2^{\N_0}$, we define the set $S_N$ of $N$-acceptable time multiplication factors:
\begin{equation*}
	S_N = \{m_N:\R\rightarrow \R : \|m_N\|_{S_N} = \sum_{j=0}^{10} N^{-((5-2 \alpha)+\varepsilon)j} \| \partial_j m_N\|_{L^{\infty}} < \infty\}.
\end{equation*}

We have, for any $s \geqslant 0$ and $T \in (0,1]$
\begin{equation}\label{eq:TimeMult}
	\left\{ \begin{array}{cl}
		\| \sum_{N \in 2^{\N_0}} m_N(t) P_N(u) \|_{F^{s,0}(T)} &\lesssim \big( \sup_{N \in 2^{\N_0}} \| m_N \|_{S_N} \big) \| u \|_{F^{s,0}(T)}, \\
		\| \sum_{N \in 2^{\N_0}} m_N(t) P_N(u) \|_{\mathcal{N}^{s,0}(T)} &\lesssim \big( \sup_{N \in 2^{\N_0}} \| m_N \|_{S_N} \big) \| u \|_{\mathcal{N}^{s,0}(T)}, \\
		\| \sum_{N \in 2^{\N_0}} m_N(t) P_N(u) \|_{E^{s,0}(T)} &\lesssim \big( \sup_{N \in 2^{\N_0}} \| m_N \|_{S_N} \big) \| u \|_{E^{s,0}(T)}.
	\end{array} \right.
\end{equation}

Next, recall the embedding $F^{s,0}(T) \hookrightarrow C([-T,T];H^{s,0})$ and the linear energy estimate for short-time $X^{s,b}$ spaces. The following statements were proved for the KP-I equation in \cite{IKT} with the proofs carrying over to the present setting.
\begin{lemma}[{cf.~\cite[Lemma~3.1]{IKT}}]\label{lemma:Embed}
	Let $T \in (0,1]$. If $u\in F^{s,0}(T)$, then
	\begin{equation}
		\sup_{t\in [-T,T]}\|u(t)\|_{H^{s,0} }\lesssim \|u\|_{F^{s,0}(T)}.
	\end{equation}
\end{lemma}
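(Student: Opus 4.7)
The plan is to reduce the statement to a frequency-localized version and then invoke a standard $L^\infty_t L^2_{x,y}$ embedding of the $X_N$ spaces.

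First, I would observe that by the Littlewood--Paley characterization of $H^{s,0}(\R^2)$ and positivity of each dyadic term,
\begin{equation*}
\sup_{t\in[-T,T]}\|u(t)\|_{H^{s,0}}^{2}
\;\lesssim\; \sum_{N\in 2^{\N_0}} N^{2s}\,\sup_{t\in[-T,T]}\|P_N u(t)\|_{L^2_{x,y}}^{2}.
\end{equation*}
Therefore, matching the definition of the $F^{s,0}(T)$ norm, it suffices to establish the frequency-localized estimate
\begin{equation*}
\sup_{t\in[-T,T]}\|u_N(t)\|_{L^2_{x,y}}\;\lesssim\;\|u_N\|_{F_N(T)}
\qquad\text{for all }u_N\in F_N(T),\ N\in 2^{\N_0}.
\end{equation*}

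Next, I would fix an arbitrary $t_0\in[-T,T]$ and pick an extension $\tilde{u}_N\in F_N$ with $\tilde{u}_N=u_N$ on $[-T,T]\times\R^2$. Setting $t_N=t_0$ in the definition of $\|\cdot\|_{F_N}$ and using that the cutoff $\eta_0$ satisfies $\eta_0(0)\neq 0$, we have
\begin{equation*}
u_N(t_0,x,y)\;=\;\eta_0(0)^{-1}\,\bigl[\tilde{u}_N\cdot\eta_0(N^{(5-2\alpha)+\varepsilon}(t-t_0))\bigr]\bigr|_{t=t_0}.
\end{equation*}
Hence it remains to show the generic embedding $\sup_{t}\|G(t,\cdot)\|_{L^2_{x,y}}\lesssim\|\mathcal{F}_{t,x,y}G\|_{X_N}$ whenever $\mathcal{F}_{t,x,y}G$ is supported in $\R\times A_N$; applying this with $G=\tilde{u}_N\cdot\eta_0(N^{(5-2\alpha)+\varepsilon}(t-t_0))$ and then taking the infimum over extensions yields the frequency-localized bound.

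To prove this final embedding, I would use Fourier inversion in $t$ and Plancherel in $(x,y)$ to write
\begin{equation*}
\|G(t_0,\cdot)\|_{L^2_{x,y}}
\;=\;\|(\mathcal{F}_{x,y}G)(t_0,\cdot)\|_{L^2_{\xi,\eta}}
\;\leqslant\;\tfrac{1}{2\pi}\Bigl\|\int_{\R}|\hat{G}(\tau,\xi,\eta)|\,d\tau\Bigr\|_{L^2_{\xi,\eta}}.
\end{equation*}
Decomposing dyadically in modulation, $\hat{G}=\sum_{L\in 2^{\N_0}}\eta_L(\tau-\omega_\alpha(\xi,\eta))\hat{G}$, and applying Cauchy--Schwarz in $\tau$ on the annulus of width $\sim L$ (for each fixed $(\xi,\eta)$, after the change of variables $\sigma=\tau-\omega_\alpha(\xi,\eta)$), gives
\begin{equation*}
\int_{\R}|\hat{G}_L(\tau,\xi,\eta)|\,d\tau\;\lesssim\;L^{1/2}\Bigl(\int_{\R}|\hat{G}_L(\tau,\xi,\eta)|^2\,d\tau\Bigr)^{1/2}.
\end{equation*}
Summing in $L$ and taking the $L^2_{\xi,\eta}$ norm reproduces exactly the $X_N$ norm, proving the embedding.

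The proof is essentially a routine consequence of the definitions; the only mild subtlety is the direction of Minkowski, which is why I pass through $(\mathcal{F}_{x,y}G)(t_0,\cdot)$ before applying Cauchy--Schwarz in $\tau$ pointwise in $(\xi,\eta)$, so that no unfavorable exchange of $L^1_\tau$ and $L^2_{\xi,\eta}$ is required.
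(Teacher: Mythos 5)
Your proof is correct and follows essentially the same route as the paper (which defers to the argument of Ionescu--Kenig--Tataru): reduce to fixed $N$ and fixed $t_0$, insert the time cutoff centered at $t_N=t_0$ from the definition of $F_N$, and conclude with the embedding $\big\| \int_{\R}|f(\tau,\xi,\eta)|\,d\tau \big\|_{L^2_{\xi,\eta}} \lesssim \|f\|_{X_N}$, which is exactly the estimate recorded after the definition of $X_N$ and which you reprove via modulation decomposition and Cauchy--Schwarz in $\tau$. No gaps.
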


\begin{lemma}[{cf.~\cite[Proposition~3.2]{IKT}}]\label{lemma:LinShortTime}
	Let $T \in (0,1]$, $u \in C([-T,T];H^{\infty,0})$ and 
	\begin{equation*}
		\partial_t u - D_x^{\alpha}\partial_{x} u - \partial_{x}^{-1} \partial_y^2 u = f, \quad (x,y) \in \R^2, ~~t \in (-T,T).  
	\end{equation*}
	Then, the following estimate holds:
	\begin{equation}
		\|u\|_{F^{s,0}(T)} \lesssim \|u\|_{E^{s,0}(T)} + \| f \|_{\mathcal{N}^{s,0}(T)}.
	\end{equation}
	
\end{lemma}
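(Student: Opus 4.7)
The plan is to reduce via Littlewood--Paley decomposition to the dyadic statement
\[
\|P_N u\|_{F_N(T)} \lesssim \|P_N u\|_{E_N(T)} + \|P_N f\|_{\mathcal{N}_N(T)}, \qquad N \in 2^{\mathbb{N}_0},
\]
and then take the $N^{2s}$-weighted $\ell^2$-sum (the time derivative, $D_x^\alpha \partial_x$, and $\partial_x^{-1}\partial_y^2$ all commute with $P_N$, so the equation restricts cleanly to each frequency band). Fix $N$, set $M = N^{(5-2\alpha)+\varepsilon}$, $u_N = P_N u$, $f_N = P_N f$, and choose extensions of $u_N,f_N$ from $[-T,T]$ to $\R$ almost realizing the $F_N(T)$- and $\mathcal{N}_N(T)$-infima. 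Fix $t_N \in \R$ and set $w(t) = \eta_0(M(t-t_N))\,u_N(t)$. On the support of the cutoff, $w$ solves
\[
(\partial_t - D_x^\alpha \partial_x - \partial_x^{-1}\partial_y^2)\,w = \eta_0(M(t-t_N))\,f_N(t) + M\,\eta_0'(M(t-t_N))\,u_N(t).
\]

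The second step is Duhamel:
\[
w(t) = U_\alpha(t-t_N)\,w(t_N) + \int_{t_N}^{t} U_\alpha(t-s)\,\bigl[\eta_0(M(\cdot-t_N))f_N + M\eta_0'(M(\cdot-t_N))u_N\bigr](s)\,ds.
\]
The homogeneous piece, multiplied by a slightly enlarged time cutoff at scale $M^{-1}$, has $X_N$-norm $\lesssim \|w(t_N)\|_{L^2} \leqslant \|u_N\|_{E_N(T)}$, since its space-time Fourier transform is the rescaled cutoff Fourier transform evaluated at $\tau - \omega_\alpha(\xi,\eta)$, whose $X_N$-norm can be computed directly by dyadic summation in $L$. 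For the Duhamel integrals, the key short-time lemma (cf.\ \cite{IKT}) states that for $g$ frequency-localized to $\tilde A_N$,
\[
\Bigl\|\eta_0(M(t-t_N))\!\int_{t_N}^{t} U_\alpha(t-s)\,g(s)\,ds\Bigr\|_{X_N} \lesssim \bigl\|(\tau - \omega_\alpha(\xi,\eta) + iM)^{-1}\,\mathcal{F}_{t,x,y}g\bigr\|_{X_N};
\]
this follows by decomposing the Fourier multiplier $(\tau-\omega_\alpha - i0)^{-1}$ according to whether $|\tau-\omega_\alpha|\gtrsim M$ (which simply reproduces the $X_N$ structure up to the $+iM$ regularization) or $|\tau-\omega_\alpha|\ll M$ (handled via the Taylor expansion of $e^{i(t-s)\omega_\alpha}$ against the $O(M^{-1})$-length cutoff).

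Applied with $g = \eta_0(M(\cdot-t_N)) f_N$ this yields the $\|f_N\|_{\mathcal{N}_N(T)}$-bound via the very definition of $\mathcal{N}_N$ and the $S_N$-multiplier property \eqref{eq:TimeMult}. The commutator $g = M\eta_0'(M(\cdot-t_N))u_N$ is the only subtlety: its $\mathcal{N}_N$-norm is a priori only controlled by $\|u_N\|_{F_N(T)}$ itself, which would produce the unknown $F_N$-norm on the right-hand side. The standard IKT resolution exploits that $\eta_0'$ is supported away from the centre of the window, so the commutator is genuinely a boundary term and can be removed by shifting the Duhamel base point to a time $t_N' \in [-T,T]$ with $|t_N' - t_N| \ll M^{-1}$ where $\eta_0(M(t_N'-t_N)) = 1$, so that $w(t_N') = u_N(t_N')$ and $\|w(t_N')\|_{L^2}\leqslant \|u_N\|_{E_N(T)}$; reorganizing the Duhamel identity around $t_N'$ turns the commutator into a linear evolution of an energy-controlled datum. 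Taking the supremum over $t_N$ yields the dyadic estimate, and $\ell^2$-summing in $N$ concludes. The main obstacle I expect is precisely this commutator-versus-energy trade-off, together with checking that the constants in the short-time Duhamel lemma are independent of the chosen extensions of $u_N$ and $f_N$, so that passing to the infima defining $F_N(T)$ and $\mathcal{N}_N(T)$ is legitimate.
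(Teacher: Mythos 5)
Your outline has the right skeleton -- it is the short-time energy-estimate scheme of Ionescu--Kenig--Tataru, which is precisely what the paper relies on (the paper gives no proof of Lemma \ref{lemma:LinShortTime}, citing \cite[Proposition~3.2]{IKT} and noting the argument carries over) -- but as written it has a genuine gap at the level of the extensions. You take a near-optimal extension of $u_N=P_Nu$ for the $F_N(T)$-infimum; for such an extension you know nothing about the equation it satisfies outside $[-T,T]$, so the forced equation you write for $w=\eta_0(M(t-t_N))u_N$, and with it the Duhamel identity, is unavailable on any window meeting $\R\setminus[-T,T]$. Since the supremum in the definition of $F_N$ runs over \emph{all} $t_N\in\R$, there are windows overlapping the endpoints and windows lying entirely outside $[-T,T]$; for the latter there is no $t_N'\in[-T,T]$ with $|t_N'-t_N|\ll M^{-1}$, so your base-point shift is not available, and moreover $\|u_N(t_N)\|_{L^2}\leqslant \sup_{|t|\leqslant T}\|u_N(t)\|_{L^2}$ is only known for $t_N\in[-T,T]$. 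The actual content of the IKT proof is to \emph{construct} the extension rather than take an abstract one: extend only $f_N$ near-optimally, and set $\tilde u_N(t)=U_\alpha(t\mp T)u_N(\pm T)$ for $\pm t>T$, so that $\tilde u_N$ solves the equation globally with forcing $\mathbf{1}_{[-T,T]}\tilde f_N$, its $L^2$-norm at every time is controlled by the $E_N$-data by unitarity of $U_\alpha$, and exterior windows are handled by the homogeneous bound alone; estimating the $F_N$-norm of this particular extension is what bounds $\|u_N\|_{F_N(T)}$, and no ``passing to the infimum'' is needed on the left-hand side.

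Two further points in your scheme do not work as stated. Shifting the Duhamel base point of the forced equation for $w$ does not remove the commutator $M\eta_0'(M(t-t_N))u_N$ from the forcing; the correct manipulation is to apply the cutoff to the Duhamel representation of $\tilde u_N$ itself, i.e.\ $w(t)=\eta_0(M(t-t_N))\bigl[U_\alpha(t-t_N')\tilde u_N(t_N')+\int_{t_N'}^{t}U_\alpha(t-s)h(s)\,ds\bigr]$, so that no commutator ever appears and only your two key ingredients (the homogeneous bound and the $(\tau-\omega_\alpha+iM)^{-1}$-weighted inhomogeneous bound, together with \eqref{prop} and \eqref{eq:TimeMult}) are used -- your circularity worry then disappears. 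Finally, with the correct extension the forcing carries the sharp factor $\mathbf{1}_{[-T,T]}$, so windows overlapping $\pm T$ require the separate estimate for multiplication by characteristic functions of time intervals (the analogue of the bound the paper invokes for $n\in A^c$ in the energy-estimate section); this ingredient is missing from your sketch. With these repairs your argument coincides with the cited proof.
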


\subsection{Linear Strichartz estimates} We define the linear propagator $U_{\alpha}(t)$ as a Fourier multiplier acting on functions $\phi \in \mathcal{S}(\R^2)$ whose Fourier transform is supported away from the origin
\begin{equation}\label{eq:LinearProp}
    (U_{\alpha}(t)\phi)^{\wedge}(\xi,\eta) = e^{it \omega_\alpha(\xi,\eta)} \hat{\phi}(\xi,\eta) = e^{it(|\xi|^{\alpha}\xi +\frac{\eta^2}{\xi})}\hat{\phi}(\xi,\eta).
\end{equation}
Since $U_{\alpha}(t)$ is a linear isometric mapping on $H^{s_1,s_2}$, the above extends by density. We state the linear Strichartz estimates. These enable us to handle the non-resonant interactions. Furthermore, we observe the smoothing effect pertaining to the higher dispersion for $\alpha>2$. The following Strichartz estimates are due to Hadac \cite{Hadac} for dispersion-generalized KP-II equations, but it is easy to see that the argument transfers to KP-I equations, as pointed out for $\alpha =2 $ by Saut \cite{Saut1993}.
\begin{theorem}[Linear~Strichartz~estimate,~{cf.~\cite[Theorem 3.1]{Hadac}}]
	Let $\alpha \geqslant 2$, $2<q\leqslant \infty$, and
	\begin{equation*}
		\frac{1}{q}+\frac{1}{r} = \frac{1}{2},\quad \gamma := \Big (1-\frac{2}{r}\Big) \Big(\frac{1}{2} - \frac{\alpha}{4}\Big).
	\end{equation*}
	Then, we have
	\begin{equation}\label{eq:LinStriFKP}
		\|D_x^{-\gamma} U_{\alpha}(t)u_0\|_{L_t^qL_{x,y}^r} \lesssim \|u_0\|_{L^2_{x,y}}
	\end{equation}
	with $(D_x^{-\gamma} f)^{\wedge} (\xi,\eta) = |\xi|^{-\gamma} \hat{f}(\xi,\eta)$ for $\gamma \in \R$.
\end{theorem}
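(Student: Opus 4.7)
\emph{Approach.} The plan is to run the standard Strichartz recipe: derive a frequency-localized dispersive decay estimate for the propagator $U_\alpha(t)$, interpolate it against the trivial $L^2$ isometry, and then conclude via a $TT^*$ argument combined with Hardy--Littlewood--Sobolev in time. Since $\omega_\alpha(\xi,\eta) = |\xi|^\alpha \xi + \eta^2/\xi$ differs from its dispersion-generalized KP-II analogue only by the sign of the transverse term, and this sign plays no role in the forthcoming oscillatory-integral bounds, Hadac's argument transfers essentially verbatim. The only input that is specific to the current setting is the behavior of the $\xi$-phase $|\xi|^\alpha \xi$, which governs the $\alpha$-dependence in $\gamma$.

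\emph{Kernel reduction and van der Corput.} Writing $P_N U_\alpha(t) u_0 = K_{t,N} * P_N u_0$ with
\[
K_{t,N}(x,y) = \int \phi_N(\xi)\, e^{i(x\xi + y\eta + t|\xi|^\alpha \xi + t\eta^2/\xi)}\, d\xi\, d\eta,
\]
the inner $\eta$-integral is Fresnel: completing the square (and tracking the sign of $\xi/t$) produces an explicit factor of order $|\xi/t|^{1/2}$ times $e^{-iy^2\xi/(4t)}$. What remains is a one-dimensional oscillatory integral with amplitude $\phi_N(\xi)|\xi|^{1/2}$ and phase $\Phi(\xi) = t|\xi|^\alpha \xi + (x - y^2/(4t))\xi$. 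On $|\xi|\sim N$ one has $|\Phi''(\xi)| \sim |t| N^{\alpha-1}$, so van der Corput's lemma yields $|K_{t,N}(x,y)| \lesssim t^{-1/2} \cdot N^{1/2} \cdot (|t| N^{\alpha-1})^{-1/2} = N^{(2-\alpha)/2}|t|^{-1}$, and Young's inequality promotes this to
\[
\|P_N U_\alpha(t) u_0\|_{L^\infty_{x,y}} \lesssim N^{(2-\alpha)/2} |t|^{-1}\, \|u_0\|_{L^1_{x,y}}.
\]

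\emph{Interpolation, $TT^*$, and summation.} Riesz--Thorin interpolation with the $L^2$ isometry at parameter $\theta = 1 - 2/r$ gives $\|P_N U_\alpha(t)u_0\|_{L^r_{x,y}} \lesssim N^{\theta(2-\alpha)/2} |t|^{-\theta} \|u_0\|_{L^{r'}_{x,y}}$. The usual $TT^*$ reformulation reduces the frequency-localized Strichartz estimate to the convolution bound $\|\,|t|^{-\theta} * F\,\|_{L^q_t} \lesssim \|F\|_{L^{q'}_t}$; Hardy--Littlewood--Sobolev applies precisely when $\theta = 2/q$, which is equivalent to the scaling relation $1/q + 1/r = 1/2$, and the strict inequality $q > 2$ keeps us off the HLS endpoint. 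One obtains
\[
\|P_N U_\alpha(t) u_0\|_{L^q_t L^r_{x,y}} \lesssim N^{\gamma} \|u_0\|_{L^2_{x,y}}, \quad 2\gamma = \theta\tfrac{2-\alpha}{2} = \bigl(1-\tfrac{2}{r}\bigr)\bigl(1-\tfrac{\alpha}{2}\bigr),
\]
which matches $\gamma = (1-2/r)(1/2 - \alpha/4)$. Summing in $N$ via the Littlewood--Paley square function estimate in $L^q_t L^r_{x,y}$ (valid since $q,r \geqslant 2$) on the left and Plancherel on the right then upgrades the frequency-localized inequality to \eqref{eq:LinStriFKP}. The main technical point is the kernel reduction above: the $\eta$-Fresnel integration interacts nontrivially with the sign of $\xi$ and injects the extra $|\xi|^{1/2}$ weight into the $\xi$-integral; once this is done, the remaining van der Corput / $TT^*$ machinery is routine.
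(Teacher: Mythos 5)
Your proposal is correct and is essentially the argument the paper relies on by citation: the paper does not prove this estimate itself but invokes Hadac's proof for dispersion-generalized KP-II (and Saut's observation for $\alpha=2$), which is exactly the Fresnel-in-$\eta$ reduction, van der Corput in $\xi$, interpolation, $TT^*$/Hardy--Littlewood--Sobolev scheme you carry out, with the sign of $\eta^2/\xi$ playing no role. The only (trivial) point left implicit is the endpoint $q=\infty$, $r=2$, $\gamma=0$, which is Plancherel rather than HLS.
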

We record a second linear Strichartz estimate for low $x$ frequencies whose proof is simpler:
\begin{lemma}[Strichartz estimates for low frequencies]
Let $N,K \in 2^{\Z}$, $I \subseteq \R$ be an interval of length $|I| \sim K$, and $|\xi| \sim N$ for any $\xi \in I$. Suppose that $\hat{u}_0(\xi,\eta) = 0$, if $\xi \notin I$. Then, the following estimate holds:
\begin{equation}
    \label{eq:L4StrichartzDyadic}
    \| U_\alpha(t) u_0 \|_{L_t^4([0,1]; L^4_{x,y}(\R^2))} \lesssim K^{\frac{1}{4}} N^{\frac{1}{8}} \| u_0 \|_{L^2}.
\end{equation}
\end{lemma}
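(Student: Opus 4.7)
The plan is to exploit the one-dimensional Schr\"odinger-like structure in the $y$ variable after freezing the $x$-frequency $\xi$. For each $\xi\neq 0$, write $g_\xi(y) = \int e^{iy\eta}\hat u_0(\xi,\eta)\,d\eta$ for the partial inverse Fourier transform in $\eta$, and let $S_\xi(t) = e^{-i(t/\xi)\partial_y^2}$ denote the $1$D propagator with Fourier multiplier $e^{it\eta^2/\xi}$. Then
\begin{equation*}
v(t,x,y):=(U_\alpha(t) u_0)(x,y) = \int_I e^{ix\xi + it|\xi|^{\alpha}\xi}\,(S_\xi(t) g_\xi)(y)\,d\xi,
\end{equation*}
and the unit-modulus phase $e^{it|\xi|^\alpha \xi}$ plays no role in modulus estimates.

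First, I would use the length-$K$ frequency support to run a Plancherel/convolution argument in $x$. For fixed $(t,y)$, the map $x\mapsto v(t,x,y)$ is Fourier-supported in $I$, hence $v^2(t,\cdot,y)$ is Fourier-supported in $I+I$, an interval of length $\leq 2K$. By Plancherel in $x$, Cauchy--Schwarz on the resulting convolution, and the support bound,
\begin{equation*}
\|v(t,\cdot,y)\|_{L^4_x}^4 = \|v^2(t,\cdot,y)\|_{L^2_x}^2 \lesssim K\,\Big(\int_I |(S_\xi(t)g_\xi)(y)|^2\,d\xi\Big)^2.
\end{equation*}
Integrating in $(t,y)\in[0,1]\times\R$ and applying Minkowski's integral inequality (valid since $2\leq 4$) yields
\begin{equation*}
\|v\|_{L^4_t([0,1]) L^4_{x,y}}^4 \lesssim K\,\Big(\int_I \|S_\xi(\cdot)g_\xi\|_{L^4_{t\in[0,1]}L^4_y}^2\,d\xi\Big)^2.
\end{equation*}

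The second ingredient is the single-slice $1$D Strichartz-type bound $\|S_\xi g\|_{L^4_{t\in[0,1]}L^4_y}\lesssim |\xi|^{1/8}\|g\|_{L^2_y}$. Since $(p,q)=(8,4)$ is admissible for the $1$D Schr\"odinger equation ($2/p+1/q=1/2$), one has $\|e^{-is\partial_y^2}h\|_{L^8_s L^4_y(\R\times\R)}\lesssim \|h\|_{L^2}$. Rescaling $s=t/\xi$ yields $\|S_\xi g\|_{L^8_t(\R)L^4_y}\lesssim|\xi|^{1/8}\|g\|_{L^2_y}$, and Cauchy--Schwarz in $t$ on the unit interval $[0,1]$ passes from $L^8_t$ to $L^4_t$ at no cost.

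Finally, Plancherel in $\eta$ gives $\|g_\xi\|_{L^2_y}^2=\int|\hat u_0(\xi,\eta)|^2\,d\eta$, so using $|\xi|\sim N$ on $I$,
\begin{equation*}
\int_I \|S_\xi g_\xi\|_{L^4_{t,y}}^2\,d\xi \lesssim N^{1/4}\int_I \|g_\xi\|_{L^2_y}^2\,d\xi = N^{1/4}\|u_0\|_{L^2}^2,
\end{equation*}
and combining this with the display above produces $\|v\|_{L^4_{t,x,y}}^4\lesssim K\,N^{1/2}\,\|u_0\|_{L^2}^4$, which is the claim. I expect the main technical point to be carefully tracking the rescaling $s=t/\xi$ and verifying that the $|\xi|^{1/8}$ factor indeed emerges from the $1$D Schr\"odinger Strichartz constant after restriction to the unit time interval; everything else is bookkeeping involving Plancherel, Cauchy--Schwarz, and Minkowski.
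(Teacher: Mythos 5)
Your argument is correct and follows essentially the same route as the paper's proof: a Bernstein-type gain of $K^{\frac{1}{4}}$ from the length-$K$ frequency support in $x$ (your convolution/Cauchy--Schwarz computation is equivalent to the Bernstein-plus-Plancherel step), Minkowski's inequality to reduce to a fixed-$\xi$ one-dimensional bound, and then the rescaled $L_t^8 L_y^4$ Schr\"odinger Strichartz estimate combined with H\"older in time to produce the factor $|\xi|^{\frac{1}{8}} \sim N^{\frac{1}{8}}$. The only differences (rescaling the global-in-time Strichartz estimate before restricting to $[0,1]$, rather than rescaling the restricted norm) are cosmetic.
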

\begin{proof}
    We use Bernstein's inequality in $x$, Plancherel's theorem, and Minkowski's inequality to find
    \begin{equation*}
        \begin{split}
        &\quad \Big\| \int e^{i(x\xi + y\eta + t(\xi |\xi|^\alpha + \frac{\eta^2}{\xi}))} \hat{u}_0(\xi,\eta) d\xi d\eta \Big\|_{L_t^4([0,1];L^4_{x,y}(\R^2))} \\
         &\lesssim K^{\frac{1}{4}} \Big\| \int e^{i(y\eta + t \frac{\eta^2}{\xi})} \hat{u}_0(\xi,\eta) d\eta \Big\|_{L_t^4([0,1];L_y^4 L_\xi^2)} \\
        &\lesssim K^{\frac{1}{4}} \Big( \int_I d\xi \Big\| \int e^{i(y\eta + t \frac{\eta^2}{\xi} )} \hat{u}_0(\xi,\eta) d\eta \big\|^2_{L_t^4([0,1];L^4_y(\R)} \Big)^{\frac{1}{2}}.
        \end{split}
    \end{equation*}
     Hence, it suffices to prove
    \begin{equation}
        \Big\| \int e^{i(y\eta + t \frac{\eta^2}{\xi})} \hat{u}_0(\xi,\eta) d\eta \Big\|_{L_t^4([0,1]; L^4_y(\R))} \lesssim N^{\frac{1}{8}} \| \hat{u_0}(\xi,\cdot) \|_{L^2_\eta}.
    \end{equation}
    By a change of variables supposing $\xi >0$ without loss of generality and H\"older in time, we find
    \begin{equation*}
    \begin{split}
        &\qquad \Big\| \int e^{i(y\eta + t \frac{\eta^2}{\xi})} \hat{u}_0(\xi,\eta) d\eta \big\|_{L_t^4([0,1]; L^4_y(\R))} \\
         &\lesssim N^{\frac{1}{4}} \Big\| \int e^{i(y\eta + t \eta^2)} \hat{u}_0(\xi,\eta) d\eta \Big\|_{L_t^4([0,\xi^{-1}]; L^4_y(\R))} \\
        &\lesssim N^{\frac{1}{8}} \Big\| \int e^{i(y\eta + t \eta^2)} \hat{u}_0(\xi,\eta) d\eta \Big\|_{L_t^8([0,\xi^{-1}]; L^4_y(\R))} \\
        &\lesssim N^{\frac{1}{8}} \| \hat{u}_0(\xi,\cdot) \|_{L^2_\eta}.
    \end{split}    
    \end{equation*}
    The ultimate estimate is an application of the $L_t^8 L_y^4$-Strichartz estimate for the one-dimensional Schr\"odinger equation (cf. \cite[Section~2.3]{Tao2006}).
\end{proof}

As a consequence of the transfer principle (cf. \cite[Lemma~2.9]{Tao2006}), we have the following:
\begin{corollary}
	Let $u \in X_N$ be supported in $\tilde{D}_{N,L}$. Then 
	\begin{equation}\label{eq:StriEmbed}
\| \mathcal{F}^{-1}(u)\|_{L^4_{t,x,y}} \lesssim [\max(1,N)]^{\frac{1}{4}-\frac{\alpha}{8}} L^{\frac{1}{2}}\|u\|_{L^2}.
	\end{equation}
\end{corollary}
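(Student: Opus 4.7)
My plan is to derive (\ref{eq:StriEmbed}) as a routine transfer-principle consequence of a dyadic-frequency $L^4_{t,x,y}$ Strichartz estimate for the free propagator $U_\alpha$, with the modulation localization to scale $L$ paying the standard $L^{1/2}$ factor via Cauchy--Schwarz in the modulation variable.

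The main step is the single-frequency free-solution estimate
\begin{equation*}
\|U_\alpha(t) \phi\|_{L^4_{t,x,y}} \lesssim [\max(1,N)]^{\frac{1}{4}-\frac{\alpha}{8}} \|\phi\|_{L^2}
\end{equation*}
for $\phi$ with $\hat\phi$ supported in $\tilde A_N$. For $N \geqslant 1$ this is immediate from Hadac's inequality (\ref{eq:LinStriFKP}) at $(q,r) = (4,4)$: one has $\gamma = \frac{1}{4} - \frac{\alpha}{8}$, and since $|\xi| \sim N$ on the support, $D_x^{-\gamma}$ acts as multiplication by $\sim N^{-\gamma}$, which I move to the right. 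For $N \leqslant 1$ I apply the low-frequency estimate (\ref{eq:L4StrichartzDyadic}) with $K \sim N$ (the $\xi$-length of $\tilde A_N$), yielding $N^{1/4} N^{1/8} = N^{3/8} \leqslant 1 = [\max(1,N)]^{\frac{1}{4} - \frac{\alpha}{8}}$. Given this, I would write $u(\tau,\xi,\eta) = g(\tau - \omega_\alpha(\xi,\eta), \xi, \eta)$ with $g$ supported in $|\sigma| \leqslant 4L$, set $\hat{G}_\sigma(\xi,\eta) := g(\sigma,\xi,\eta)$, observe
\begin{equation*}
\mathcal{F}^{-1}(u)(t,x,y) = \int_{|\sigma| \lesssim L} e^{it\sigma}\, (U_\alpha(t) G_\sigma)(x,y)\, d\sigma,
\end{equation*}
and combine Minkowski in $\sigma$, the free-solution Strichartz, Cauchy--Schwarz in $\sigma$, and Plancherel to obtain (\ref{eq:StriEmbed}).

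The subtlest point---not a genuine obstacle, but the place to pay attention---is the low-frequency case: (\ref{eq:L4StrichartzDyadic}) is stated only on $t \in [0,1]$, while (\ref{eq:StriEmbed}) involves the global $L^4_{t,x,y}(\R^3)$ norm. The cleanest resolution is to read both the lemma and the transfer principle in a unit-time-localized form, which is precisely how (\ref{eq:StriEmbed}) is consumed within the short-time spaces $F_N$ and $\mathcal{N}_N$; alternatively, one inspects the proof of (\ref{eq:L4StrichartzDyadic}), which after rescaling runs on $[0,\xi^{-1}]$ and invokes the global 1D Schr\"odinger $L^8_t L^4_y$ Strichartz, so the apparent time-restriction can be removed. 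I therefore do not expect this step to be substantively difficult.
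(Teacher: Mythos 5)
Your argument is essentially the paper's: the corollary is obtained exactly by transferring the two linear inputs, Hadac's estimate \eqref{eq:LinStriFKP} at $(q,r)=(4,4)$ for $N\gtrsim 1$ and the low-frequency estimate \eqref{eq:L4StrichartzDyadic} for $N\lesssim 1$, to functions with modulation $\lesssim L$ via the standard foliation/Cauchy--Schwarz argument, which is what the citation of the transfer principle amounts to. One caveat: your second, ``alternative'' resolution of the time-localization issue is not correct as stated. The proof of \eqref{eq:L4StrichartzDyadic} uses H\"older in time on the interval of length $\sim \xi^{-1}$ (coming from the unit time window), so the restriction to $[0,1]$ cannot simply be dropped; indeed, by the scaling \eqref{eq:ScalingfKPInitialData} a global-in-time $L^4$ bound for free solutions at frequency $|\xi|\sim N\ll 1$ with an $N$-independent constant is false (the sharp constant grows like $N^{\frac{1}{4}-\frac{\alpha}{8}}$, which is precisely Hadac's scale-invariant bound). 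Your first resolution is the right one, and it does yield the global statement \eqref{eq:StriEmbed} for $u\in X_N$: decompose $\R_t$ into unit intervals, apply the unit-time-localized estimate on each slab (the cutoff only smears modulation by $O(1)$), and sum using $\ell^2_k\subset \ell^4_k$ together with the fact that $\|u\|_{L^2}$ controls the $\ell^2_k$-sum of the slab $L^2$ norms; this is consistent with the fact that free solutions, for which the global bound fails, are not in $X_N$.
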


\section{$C^2$ ill-posedness}
\label{section:Illposedness}
In this section, we prove that \eqref{eq:fKPI} cannot be solved via Picard iteration for $\alpha$ close to $2$ as stated in Theorem \ref{thm:IllPosed}. This is a consequence of the derivative nonlinearity in case of resonance.

Recall that the resonance function is given by
	\begin{equation*}
		\Omega_{\alpha}(\xi_1,\xi_2,\eta_1,\eta_2) = |\xi_1+\xi_2|^{\alpha}(\xi_1+\xi_2)-|\xi_1|^{\alpha}\xi_1 -|\xi_2|^{\alpha}\xi_2 -\frac{(\eta_1\xi_2 - \eta_2\xi_1)^2}{\xi_1 \xi_2 (\xi_1+\xi_2) }.
	\end{equation*}
This will quantify the time oscillation in the Duhamel integral.	
To estimate the size of the resonance function, we separate $\Omega_{\alpha}$ as $\Omega_{\alpha} = \Omega_{\alpha}^1 - \Omega_{\alpha}^2$, where
\begin{equation*}
	\begin{split}
	\Omega_{\alpha}^1(\xi_1,\xi_2,\eta_1,\eta_2)& = |\xi_1+\xi_2|^{\alpha}(\xi_1+\xi_2)-|\xi_1|^{\alpha}\xi_1 -|\xi_2|^{\alpha}\xi_2,\\
	\Omega_{\alpha}^2(\xi_1,\xi_2,\eta_1,\eta_2)& = \frac{(\eta_1\xi_2 - \eta_2\xi_1)^2}{\xi_1 \xi_2 (\xi_1+\xi_2) }.
	\end{split}
	\end{equation*}
In the following we denote $H^{\bar{s}} = H^{s_1,s_2}$.

\begin{proof}[Proof of Theorem \ref{thm:IllPosed}]
    
We define the functions $\phi_1$ and $\phi_2$ via their Fourier transform
\begin{equation}
	\begin{split}
	\hat{\phi}_1(\xi_1,\eta_1) &= \gamma^{-\frac{3}{2}} \mathbf{1}_{D_1}(\xi_1,\eta_1), \\
	\hat{\phi}_2(\xi_2,\eta_2)&= \gamma^{-\frac{3}{2}} N^{-s_1-(1+\frac{\alpha}{2})s_2}\mathbf{1}_{D_2}(\xi_2,\eta_2),
	\end{split}
\end{equation}
where $D_i = \tilde{D_i} \cup (-\tilde{D_i})$ and $\tilde{D_i}$ are defined as follows:
\begin{equation}\label{eq:DefinitionSets}
	\begin{split}
		\tilde{D}_1&:= [\gamma/2,\gamma]\times [-\sqrt{1+\alpha}~\gamma^2,\sqrt{1+\alpha}~\gamma^2],\\
		\tilde{D}_2&:= [N,N+\gamma] \times [\sqrt{1+\alpha}~N^{\frac{\alpha+2}{2}}, \sqrt{1+\alpha}~N^{\frac{\alpha+2}{2}}+\gamma^2].
	\end{split}
\end{equation}
Here $N,\gamma>0$ are real numbers such that $N\gg 1, \gamma\ll 1$ and will be chosen later.
A simple computation gives $\|\phi_i\|_{H^{\bar{s}}(\R^2)}\sim 1, i=1,2$. We consider the initial data $u_0 = \phi_1 + \phi_2$ and to disprove that $\Gamma_t$ is $C^2$ at the origin, it suffices to show that
\begin{equation*}
\Big\| \int_0^t U_\alpha(t-s) \partial_x (U_\alpha(s) u_0 U_\alpha(s) u_0) ds \Big\|_{H^{s_1,s_2}} \to \infty
\end{equation*}
as $N \to \infty$. We show the above for the contribution, which comes from the interaction of a high with a low frequency. This is denoted by $u_2$ below. Here we are using that the Fourier support is disjoint from the Fourier support of other possible interactions like low-low- or high-high-interaction. For more details, we refer to the proof of \cite[Theorem~3.2]{LPS} for fractional KP-I equations with weaker dispersion (see also \cite{MolinetSautTzvetkov2002} for KP-I).
 We can write the Fourier transform of
 \begin{equation*}
     u_2(t) = \int_0^t U_\alpha(t-s) \partial_x (U_\alpha(s) \phi_1 U_{\alpha}(s) \phi_2) ds
 \end{equation*}
 as
\begin{equation*}
	\hat{u}_2(t,\xi,\eta) = \frac{c\xi e^{it(|\xi|^{\alpha}\xi +\frac{\eta^2}{\xi})}}{|D_1|^{\frac{1}{2}} |D_2|^{\frac{1}{2}}N^{s_1+(1+\frac{\alpha}{2})s_2}}  \int_{\substack{{(\xi_1,\eta_1)\in D_1,}\\{(\xi_2,\eta_2)\in D_2}}} \frac{e^{-it\Omega_{\alpha}(\xi_1,\xi_2,\eta_1,\eta_2)} - 1}{\Omega_{\alpha}(\xi_1,\xi_2,\eta_1,\eta_2)}d\xi_1d\eta_1.
\end{equation*}

We estimate the size of the resonance function as follows.

\begin{lemma}[Size of the resonance function]\label{lemma:ResoFuncSize}
Let $(\xi_i,\eta_i)\in D_i$, $i=1,2$, then
\begin{equation*}
	|\Omega^1_{\alpha}(\xi_1,\xi_2,\eta_1,\eta_2)| \sim N^{\alpha}\gamma.
\end{equation*}
\begin{proof}
	We carry out a case-by-case analysis:\\
	\textbf{(i)} $\xi_1>0, \xi_2>0$: Using the mean value theorem, 
	\begin{equation*}
		(\xi_1+\xi_2)^{\alpha +1}  - \xi_2^{\alpha+1} = (\alpha+1)\xi_1 \xi_{\ast}^{\alpha}, \quad \xi_{\ast}\in (\xi_2,\xi_2+\xi_1).
	\end{equation*}
This gives
\begin{equation*}
	|\Omega^1_{\alpha}| \sim |\xi_1((\alpha+1)(\xi_{\ast}^{\alpha} - \xi_1^{\alpha}))| \sim N^{\alpha}\gamma.
\end{equation*}
\textbf{(ii)} $|\xi_1| <|\xi_2|, \xi_1>0, \xi_2<0$: Define new variables by $\xi_2^{'} = -\xi_2, \; \xi_2^{'} >0$.
We have $ \xi_2^{'} = \xi_1 - (\xi_1-\xi_2^{'})$. Hence,
\begin{equation*}
	\begin{split}
		\Omega_{\alpha}^1 (\xi_1, \xi_2,\eta_1,\eta_2) &= |\xi_1+\xi_2|^{\alpha}(\xi_1+\xi_2) - |\xi_1|^{\alpha}\xi_1 - |\xi_2|^{\alpha}\xi_2\\
		&=-(\xi_2^{'}-\xi_1)^{\alpha+1} -\xi_1^{\alpha+1} + (\xi_2^{'})^{\alpha+1}\\
		&=(\xi_2^{'})^{\alpha+1} - (\xi_2^{'} -\xi_1)^{\alpha+1} - \xi_1^{\alpha+1}.
	\end{split}
\end{equation*}
This is the same form as obtained in case \textbf{(i)}. Hence, we can conclude the same for this case.\\
\end{proof}
\end{lemma}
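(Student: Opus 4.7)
My approach is to exploit the strong scale separation $|\xi_1|\sim \gamma\ll N\sim|\xi_2|$ and treat $|\xi_1+\xi_2|^\alpha(\xi_1+\xi_2)$ as a perturbation of $|\xi_2|^\alpha\xi_2$. First, I would note that $\Omega_\alpha^1$ depends only on $\xi_1,\xi_2$, so the $\eta$-variables play no role. Moreover $\Omega_\alpha^1$ is odd under $(\xi_1,\xi_2)\mapsto(-\xi_1,-\xi_2)$, so $|\Omega_\alpha^1|$ is invariant; it therefore suffices to treat $\xi_2>0$, which reduces matters to the two sub-cases $\xi_1>0$ (same sign) and $\xi_1<0$ (opposite signs).

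In the same-sign case I would apply the mean value theorem to $f(s)=s^{\alpha+1}$ on the interval $[\xi_2,\xi_1+\xi_2]$ to write
$$
\Omega_\alpha^1 = (\xi_1+\xi_2)^{\alpha+1} - \xi_2^{\alpha+1} - \xi_1^{\alpha+1} = \xi_1\bigl((\alpha+1)\xi_\ast^{\alpha} - \xi_1^{\alpha}\bigr),
$$
for some $\xi_\ast$ between $\xi_2$ and $\xi_1+\xi_2$, so that $|\xi_\ast|\sim N$. Because $\gamma\ll 1\ll N$, one has $\xi_1^\alpha\sim \gamma^\alpha \ll N^\alpha\sim (\alpha+1)\xi_\ast^\alpha$, so the bracket has definite sign and size $\sim N^\alpha$. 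Combined with $|\xi_1|\sim\gamma$, this yields $|\Omega_\alpha^1|\sim N^\alpha \gamma$ in this case.

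The main obstacle is the opposite-sign case $\xi_1>0>\xi_2$, where the absolute values obstruct a direct Taylor expansion around $\xi_2$. My plan is to change variables via $\xi_2':=-\xi_2>0$; because $\gamma\ll N$, the quantity $\xi_1+\xi_2=\xi_1-\xi_2'$ remains negative, and a direct computation unfolds the absolute values to give
$$
\Omega_\alpha^1 = (\xi_2')^{\alpha+1} - (\xi_2'-\xi_1)^{\alpha+1} - \xi_1^{\alpha+1}.
$$
This has the same structure as the same-sign case, with $\xi_2'$ playing the role of $\xi_1+\xi_2$ and $\xi_2'-\xi_1$ the role of $\xi_2$. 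Applying the mean value theorem on $[\xi_2'-\xi_1,\xi_2']$ reduces the problem to the previous computation, and the conclusion $|\Omega_\alpha^1|\sim N^\alpha\gamma$ holds uniformly over $D_1\times D_2$. The only subtlety worth emphasizing is that one must check there is no cancellation between the leading $(\alpha+1)\xi_\ast^\alpha$ term and the $\xi_1^\alpha$ correction; this is automatic from $\gamma^\alpha/N^\alpha=(\gamma/N)^\alpha\ll 1$, which also furnishes the matching upper bound.
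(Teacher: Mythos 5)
Your proposal is correct and follows essentially the same route as the paper: the mean value theorem applied to $s\mapsto s^{\alpha+1}$ in the same-sign case, and the substitution $\xi_2'=-\xi_2$ to reduce the mixed-sign case to the same expression. Your explicit remark that $\xi_1^\alpha\sim\gamma^\alpha\ll N^\alpha$ prevents cancellation in the bracket is a welcome detail that the paper leaves implicit.
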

\begin{remark}
The above argument can be used to determine the size of the resonance function in other cases.
\end{remark}

\emph{Proof of Theorem \ref{thm:IllPosed} (ctd)}. Using Taylor's theorem, we have
\begin{equation*}
	\begin{split}
	&|\Omega_{\alpha}^1(\xi_1,\xi_2,\eta_1,\eta_2)| = N^{\alpha}\gamma + O(N^{\alpha-1}\gamma^2),\\
	\text{ and } &|\Omega_{\alpha}^2(\xi_1,\xi_2,\eta_1,\eta_2)| = N^{\alpha}\gamma + O(N^{\frac{\alpha}{2}} \gamma^2).
	\end{split}
\end{equation*}
Since $\Omega_{\alpha} = \Omega_{\alpha}^1 -\Omega_{\alpha}^2$, for $\alpha > 2$, we obtain,
\begin{equation*}
	|\Omega_{\alpha}(\xi_1,\xi_2,\eta_1,\eta_2)| \sim N^{\alpha-1}\gamma^2.
\end{equation*}
We choose $\gamma = N^{-\frac{\alpha-1}{2}-\theta}, \theta>0$, which makes the resonance function small. The $H^{\bar{s}}(\R^2)$ norm of $u_2(t,\cdot,\cdot)$ is given by
\begin{equation*}
	\| u_2(t, \cdot, \cdot)\|_{H^{\bar{s}}(\R^2)} \sim N\gamma^{\frac{3}{2}}= N^{\frac{7}{4}-\frac{3\alpha}{4}-\frac{3\theta}{2}}.
\end{equation*}
For $\Gamma_t$ to be $C^2$, we require
\begin{equation*}
1 \sim \|\phi_1\|_{H^{\bar{s}}(\R^2)}  \|\phi_2\|_{H^{\bar{s}}(\R^2)} \gtrsim N^{\frac{7}{4}-\frac{3\alpha}{4}-\frac{3\theta}{2}},
 \end{equation*}
i.e., $\alpha \geqslant \frac{7}{3}$. This completes the proof.
\end{proof}

\section{Resonance, transversality, and the nonlinear Loomis--Whitney inequality}
\label{section:Resonance}
In this section, we analyze the resonance function and use it to obtain trilinear estimates via the nonlinear Loomis--Whitney inequality. Moreover, we employ transversality in the resonant case to obtain genuinely bilinear estimates.
We recall that the dispersion relation for the fKP-I equation is given by
\begin{equation*}
	\omega_{\alpha}(\xi,\eta) = |\xi|^{\alpha}\xi +\frac{\eta^2}{\xi},
\end{equation*}
and the resonance function is given by
\begin{equation*}
\Omega_{\alpha}(\xi_1,\xi_2,\eta_1,\eta_2) = |\xi_1+\xi_2|^{\alpha}(\xi_1+\xi_2) - |\xi_1|^{\alpha}\xi_1 - |\xi_2|^{\alpha}\xi_2 -\frac{(\eta_1\xi_2-\eta_2\xi_1)^2}{\xi_1\xi_2(\xi_1+\xi_2)}.
\end{equation*}
We say that we are in the resonant case, if
\begin{equation*}
|\Omega_{\alpha}| \ll ||\xi_1+\xi_2|^{\alpha}(\xi_1+\xi_2) - |\xi_1|^{\alpha}\xi_1 - |\xi_2|^{\alpha}\xi_2|.
\end{equation*}
Suppose that we have $N_{\max} \sim |\xi_1+\xi_2|\sim |\xi_1| \gtrsim |\xi_2|\sim N_{\min}$, then from the computation done in Lemma \ref{lemma:ResoFuncSize}, we get that the right-hand side in the above equation has size $N_{\max}^{\alpha}N_{\min}$. We find in the resonant case
\begin{equation*}
N_{\max}^{\alpha}N_{\min} \sim \Big| \frac{(\eta_1\xi_2-\eta_2\xi_1)^2}{\xi_1\xi_2(\xi_1+\xi_2)} \Big|. 
\end{equation*}
This can be further simplified to 
\begin{equation}\label{eq:ResofKP}
	|\eta_1\xi_2-\eta_2\xi_1| \sim N_{\max}^{\frac{\alpha}{2}+1}N_{\min}.
\end{equation}
We consider the gradient of the dispersion relation next:
\begin{equation*}
	\nabla \omega_{\alpha}(\xi,\eta) = \Big ( (\alpha + 1)|\xi|^{\alpha}-\frac{\eta^2}{\xi^2}, \frac{2\eta}{\xi}\Big).
\end{equation*}
Using \eqref{eq:ResofKP}, we have
\begin{equation}\label{eq:GradDisp}
	|\nabla \omega_{\alpha}(\xi_1,\eta_1)-\nabla\omega_{\alpha}(\xi_2,\eta_2)| \gtrsim \Big |\frac{\eta_1}{\xi_1} - \frac{\eta_2}{\xi_2}\Big| \sim N_{\max}^{\frac{\alpha}{2}}.
\end{equation}
The above relation shall be employed to obtain precise multilinear estimates via the nonlinear Loomis--Whitney inequality and bilinear Strichartz estimates.

\subsection{Nonlinear Loomis--Whitney inequality}
In this section, we state the setting and prove the trilinear estimate in the resonant case via the nonlinear Loomis--Whitney inequality from \cite{KS}. We recall the assumptions on the parametrizations.
\subsubsection{Assumption: }\label{assump:LWAssump}
For $i=1,2,3$, there exist $0<\beta\leqslant 1$, $b>0$, $A \geqslant 1$, $F_i \in C^{1,\beta}(\mathcal{U}_i)$, where $\mathcal{U}_i$ denote open and convex sets in $\R^2$ and $G_i \in O(3)$ such that
\begin{enumerate}
	\item the oriented surfaces $S_i$ are given by
	\begin{equation*}
		S_i = G_igr(F_i), \quad gr(F_i)=\{(z,x,y) \in \R^3 | ~z=F_i(x,y), (x,y)\in \mathcal{U}_i\}.
	\end{equation*}
\item the unit normal vector field $\mathbf{n}_i$ on $S_i$ satisfies the H\"older condition 
\begin{equation*}
	\sup_{\sigma,\tilde{\sigma} \in S_i} \frac{|\mathfrak{n}_i(\sigma) - \mathfrak{n}_i(\tilde{\sigma})|}{|\sigma-\tilde{\sigma}|^{\beta}} + \frac{|\mathfrak{n}_i(\sigma)(\sigma-\tilde{\sigma})|}{|\sigma-\tilde{\sigma}|^{1+\beta}} \leqslant b;
\end{equation*}
\item the matrix $N(\sigma_1,\sigma_2,\sigma_3) = (\mathfrak{n}_1(\sigma_1), \mathfrak{n}_2(\sigma_2),\mathfrak{n}_3(\sigma_3))$ satisfies the transversality condition
\begin{equation*}
	A^{-1} \leqslant \det~ N(\sigma_1,\sigma_2,\sigma_3) \leqslant 1,
\end{equation*}
for all $(\sigma_1,\sigma_2,\sigma_3) \in S_1 \times S_2 \times S_3$.
\end{enumerate}
For $\epsilon >0$, by $S_i(\epsilon)$ we denote
\begin{equation*}
    S_i(\epsilon):=G_i\{(z,x,y)\in \R \times \mathcal{U}_i : |z-F_i(x,y)| <\epsilon\}.
    \end{equation*}

\begin{theorem}\cite[Theorem 4.3]{KS}
	\label{thm:KSLoomisWhitney}
	Let $A$ be dyadic and $f_i \in L^2(S_i(\epsilon))$, $i=1,2$. Suppose that $(S_i)_{i=1}^3$ satisfies Assumption \ref{assump:LWAssump}. Then, for $\epsilon >0$, we find the following estimate to hold:
	\begin{equation}
	\label{eq:KSLoomisWhitney}
		\|f_1\ast f_2\|_{L^2(S_3(\epsilon))} \lesssim \epsilon^{\frac{3}{2}} A^{\frac{1}{2}} \|f_1\|_{L^2(S_1(\epsilon))} \|f_2\|_{L^2(S_2(\epsilon))},
	\end{equation}
where the implicit constant is independent of $\beta$ and $b$.
\end{theorem}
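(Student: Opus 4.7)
The plan is to reduce the convolution bound to a trilinear form estimate by duality, parametrize each thickened surface using its graph description $S_i = G_i \cdot \mathrm{gr}(F_i)$, and then perform a change of variables whose Jacobian is controlled by the transversality determinant. By duality the claim is equivalent to
\[
\Big| \int f_1(\sigma_1) f_2(\sigma_2) f_3(\sigma_1 + \sigma_2) \, d\sigma_1 d\sigma_2 \Big| \lesssim \epsilon^{3/2} A^{1/2} \prod_{i=1}^3 \|f_i\|_{L^2(S_i(\epsilon))}
\]
for $f_i$ supported on $S_i(\epsilon)$.

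First I would parametrize each $S_i(\epsilon)$ by three parameters $(\tilde{x}_i, \tilde{y}_i, s_i) \in \mathcal{U}_i \times (-\epsilon, \epsilon)$ via the graph representation $\sigma_i = G_i \Phi_i(\tilde{x}_i, \tilde{y}_i, s_i)$, with $|s_i| < \epsilon$ encoding the vertical offset, and pull $f_i$ back to $\tilde{f}_i$ on this domain. Substituting into the trilinear form, the convolution constraint $\sigma_3 = \sigma_1 + \sigma_2$ becomes a system that implicitly defines $(\tilde{x}_3, \tilde{y}_3, s_3)$ as a function of the other six parameters. A direct computation shows that the $3 \times 3$ Jacobian of this implicit map equals, up to $O(1)$ graph factors coming from $\nabla F_i$, the transversality matrix $N(\sigma_1, \sigma_2, \sigma_3)$; its determinant is bounded below by $A^{-1}$ by hypothesis, so the implicit function theorem applies and the inverse Jacobian is of size at most $A$.

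With this change of variables in hand, I would estimate the resulting $6$-dimensional integral by Cauchy--Schwarz in the thickness variables $s_1, s_2, s_3$ (each confined to an interval of length $\sim \epsilon$), which produces the factor $\epsilon^{3/2}$. The remaining integral in the spatial parameters $(\tilde{x}_i, \tilde{y}_i)$ is of bilinear / Loomis--Whitney type, controlled via two further applications of Cauchy--Schwarz, yielding $\prod_{i=1}^3 \|\tilde{f}_i\|_{L^2}$; the transversality factor $A^{1/2}$ emerges because the full factor $A$ from the inverse Jacobian is split under the square-root of the outer Cauchy--Schwarz. I expect the main obstacle to be the \emph{global} character of the change of variables: since the $F_i$ are only $C^{1,\beta}$, the implicit map is merely H\"older continuous, and one must verify that the Jacobian bound propagates uniformly over $\mathcal{U}_i$ without introducing any dependence on the H\"older data $\beta$ or $b$. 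The H\"older regularity thus plays only a qualitative role, justifying the implicit function construction, while the quantitative content of the estimate comes solely from the transversality lower bound and the thickness parameter $\epsilon$.
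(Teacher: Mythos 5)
This theorem is not proved in the paper at all: it is imported verbatim from \cite{KS}, so there is no internal argument to compare your sketch with, and your proposal has to stand on its own as a proof of the cited result. As such it has genuine gaps. The central technical claim --- that the ``$3\times 3$ Jacobian'' of the map expressing $(\tilde{x}_3,\tilde{y}_3,s_3)$ through the six parameters $(\tilde{x}_1,\tilde{y}_1,s_1,\tilde{x}_2,\tilde{y}_2,s_2)$ equals, up to $O(1)$ factors, the transversality matrix $N(\sigma_1,\sigma_2,\sigma_3)$ --- is not correct as stated. That map goes from six variables to three; the only way to extract a $3\times3$ minor is to freeze one surface's parameters, and then $\sigma_1\mapsto\sigma_1+\sigma_2$ is a translation, so what you compute is a chart transition between the $S_1(\epsilon)$- and $S_3(\epsilon)$-parametrizations, of determinant $\sim 1$, carrying no transversality information whatsoever. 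The determinant $\det N$ only enters when the three vertical offsets $s_1,s_2,s_3$ are used \emph{simultaneously} as part of a coordinate system on the six-dimensional domain, with the remaining fiber variables running along the intersection curves of the shifted surfaces; constructing that change of variables, controlling its global injectivity/multiplicity, and tracking its Jacobian is precisely the nontrivial content of the theorem, and it is absent from the sketch.

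Second, the sharp powers are asserted rather than derived. Cauchy--Schwarz performed in the order you describe (thickness variables first, then ``two further applications'' in the lateral variables) is exactly the naive scheme which, already for translated flat slabs, loses a factor $\epsilon^{1/2}$ or produces $A$ in place of $A^{1/2}$: the flat Loomis--Whitney two-step Cauchy--Schwarz applies only after the three foliations have been simultaneously flattened, and no single global change of variables does this for curved surfaces. The known arguments gain the half powers by genuinely trilinear means, e.g.\ foliating the thickened sets into surface-carried measures and proving a sharp convolution estimate for measures on transversal surfaces \cite{BennettCarberyWright2005,BejenaruHerrTataru2010}, respectively by the scale-invariant scheme of \cite{KS} for the global statement. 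Finally, the uniformity you dismiss as ``qualitative'' --- an estimate over arbitrary convex $\mathcal{U}_i$ with a constant independent of $\beta$ and $b$ --- is the very point of \cite[Theorem 4.3]{KS} as compared with the earlier local results; an implicit-function-theorem construction is local, and your sketch does not explain how to patch the local pieces without introducing dependence on $b$, $\beta$, or the size of the domains. So the proposal captures the right heuristics (duality, transversality determinant, Cauchy--Schwarz in the thickness), but the steps where the theorem is actually hard are exactly the ones treated as routine.
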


In the following, we apply Theorem \ref{thm:KSLoomisWhitney} in the resonant case to obtain a trilinear estimate:
\begin{lemma}\label{lemma:LW}
	Let $\alpha >0$, and $N_1,N_2,N_3\in 2^{\Z}$ be such that $N_2 \lesssim N_1 \sim N_3$, $N_1 \gtrsim 1$ and $L_1,L_2,L_3\in 2^{\N_0}$ such that $L_1,L_2,L_3 \ll N_1^{\alpha} N_2$. Let $f, g , h: \R \times \R^2 \rightarrow \R_{+}$ be $L^2$ functions supported in $\tilde{D}_{N_1,\leqslant L_1}, \tilde{D}_{N_2,\leqslant L_2}$ and $\tilde{D}_{N_3,\leqslant L_3}$, respectively. Then
	\begin{equation}\label{eq:LW}
\Big| \int (f \ast g)\cdot h\Big| \lesssim N_1^{-\frac{3\alpha}{4} +\frac{1}{2}} N_2^{-\frac{1}{2}} (L_1L_2L_3)^{\frac{1}{2}}\|f\|_{L^2} \|g\|_{L^2} \|h\|_{L^2}.
	\end{equation}
\end{lemma}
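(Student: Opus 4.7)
The plan is to apply the global nonlinear Loomis--Whitney inequality (Theorem \ref{thm:KSLoomisWhitney}) to modulation neighborhoods of the three characteristic surfaces $\Sigma_i = \{(\omega_\alpha(\xi, \eta), \xi, \eta) : (\xi, \eta) \in \tilde{A}_{N_i}\}$, after rescaling so that the dominant frequency is at unit scale.

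As a preliminary reduction, I would replace $\tilde{D}_{N_i, \leqslant L_i}$ by single modulation shells $\tilde{D}_{N_i, L_i}$: the bound depends on each $L_i$ as $L_i^{1/2}$, so geometric summation in the dyadic subshells recovers the stated estimate. Next, using the scaling symmetry $\xi = N_1 \xi'$, $\eta = N_1^{1+\alpha/2} \eta'$, $\tau = N_1^{\alpha+1} \tau'$, which leaves the characteristic graph $\tau = \omega_\alpha(\xi, \eta)$ invariant, I would pass to rescaled variables. In the rescaled picture the two large frequencies are at unit scale, the small frequency is at scale $N_2/N_1 \leqslant 1$, and the modulation thresholds become $L_i' = L_i/N_1^{\alpha+1} \ll N_2/N_1$. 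The smallness of $L_i'$ relative to the non-resonant part of the dispersion forces the resonant regime \eqref{eq:ResofKP}, in which the rescaled version of \eqref{eq:GradDisp} gives $|\nabla \omega_\alpha(\xi_i', \eta_i') - \nabla \omega_\alpha(\xi_j', \eta_j')| \gtrsim 1$ for the interacting frequency triples.

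To verify Assumption \ref{assump:LWAssump}, I would then compute the determinant of the matrix of unit normals at the rescaled surfaces. Using $\nabla \omega_\alpha(\xi, \eta) = ((\alpha+1)|\xi|^\alpha - (\eta/\xi)^2, 2\eta/\xi)$ together with the identity $r_1 - r_3 = \xi_2 (r_1 - r_2)/(\xi_1 + \xi_2)$ (with $r_i = \eta_i/\xi_i$), a direct calculation yields $|\det N(\sigma_1, \sigma_2, \sigma_3)| \sim N_2/N_1$ in the rescaled picture, hence transversality parameter $A \sim N_1/N_2$. Dyadically decomposing the rescaled frequency supports into pieces on which the unit normals are essentially constant enforces condition (2) of Assumption \ref{assump:LWAssump} with uniform constants; the convolution constraints $\xi_1' + \xi_2' + \xi_3' = 0$ and $\eta_1' + \eta_2' + \eta_3' = 0$ limit the number of interacting pieces to $O(1)$ for fixed localizations of $f'$ and $h'$. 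Theorem \ref{thm:KSLoomisWhitney} with $\epsilon = \max_i L_i'$ then gives the desired trilinear bound on each piece in the rescaled variables, and undoing the rescaling with Jacobian $N_1^{3\alpha + 6}$ for the convolution integral and $N_1^{-(3\alpha/4 + 3/2)}$ per $L^2$-norm produces \eqref{eq:LW}.

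The main obstacle is the quantitative verification that $|\det N(\sigma_1, \sigma_2, \sigma_3)| \sim N_2/N_1$ at the required size. Expanding the cross product $(\nabla \omega_\alpha^{(1)} - \nabla \omega_\alpha^{(3)}) \times (\nabla \omega_\alpha^{(2)} - \nabla \omega_\alpha^{(3)})$ in the variables $(u_i, v_i) = (|\xi_i|^\alpha, \eta_i/\xi_i)$ produces two contributions of comparable magnitude, coming respectively from the $|\xi|^{\alpha+1}$ and $\eta^2/\xi$ parts of $\omega_\alpha$, and one must check that these do not cancel for any admissible resonant configuration. A secondary difficulty is the choice of decomposition, which must simultaneously ensure the uniform $C^{1,\beta}$-regularity of the normals in condition (2) of Assumption \ref{assump:LWAssump} and almost-orthogonality when summing over the pieces.
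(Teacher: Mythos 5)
Your overall strategy---anisotropic rescaling to unit frequency, transversality of size $N_2/N_1$ extracted from the resonance identity, an application of Theorem \ref{thm:KSLoomisWhitney}, and the Jacobian bookkeeping ($N_1^{3\alpha+6}$ for the trilinear integral, $N_1^{-(3\alpha/4+3/2)}$ per $L^2$-norm)---is exactly the paper's, and your determinant computation is consistent with the Appendix: in the resonant case the two contributions you worry about have the same sign and comparable size, so there is no cancellation and $|\det N|\sim N_2/N_1$ indeed holds. The genuine gap is in the modulation bookkeeping. Theorem \ref{thm:KSLoomisWhitney} takes a single common thickness $\epsilon$ for all three surfaces and returns the factor $\epsilon^{3/2}A^{1/2}$. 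Feeding in the full slabs with $\epsilon=\max_i L_i'$ therefore produces $(\max_i L_i)^{3/2}$ in place of $(L_1L_2L_3)^{1/2}$, which is strictly weaker whenever the $L_i$ are not all comparable (for $L_2=L_3=1$ it loses a full factor $L_1$), so the estimate \eqref{eq:LW} as stated does not follow from your application of the theorem; your earlier remark that ``the bound depends on each $L_i$ as $L_i^{1/2}$'' is not delivered by this step. The fix is the paper's: slice each function into modulation slabs of unit thickness (thickness $N_1^{-(\alpha+1)}$ after rescaling), apply Theorem \ref{thm:KSLoomisWhitney} with that common $\epsilon$ to each triple of slabs, and then sum over the at most $L_i$ slabs per function by Cauchy--Schwarz; it is this decomposition-plus-summation that yields precisely $\prod_i L_i^{1/2}$.

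The ``secondary difficulty'' you leave open is also resolved concretely in the paper, and not purely by decomposition: by the Galilean symmetry $\eta_i'\rightarrow \eta_i'+c\,\xi_i'$ one first reduces to $|\eta_i'/\xi_i'|\lesssim 1$, after which \eqref{eq:RenormalizedGroupVelocities} allows a decomposition of $\eta_i'/\xi_i'$ into unit intervals and of $(\xi_i',\eta_i')$ into balls of radius $c\,N_2/N_1$ with only $O(1)$ interacting pieces. On such balls the normals \eqref{eq:SurfaceNormals} are essentially normalized, the H\"older conditions of Assumption \ref{assump:LWAssump} hold with constants that are harmless because the constant in \eqref{eq:KSLoomisWhitney} does not depend on $b$ and $\beta$, and the determinant lower bound extends from the convolution constraint by perturbation. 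Without this Galilean normalization the ratios $\eta_i'/\xi_i'$ are unbounded, the unnormalized normals are far from unit length, and the uniformity you need for condition (2) would not be available.
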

\begin{proof}
Taking into account the localization of the functions, for $N_2\ll N_1 \sim N_3$, we write the left-hand side of \eqref{eq:LW} as
\begin{equation*}
	\Big |\int (f_{1,N_1,L_1} \ast f_{2,N_2,L_2})\cdot f_{3,N_3,L_3}\Big|.
\end{equation*}
We shall estimate the above in the resonant case, where $L_1,L_2,L_3 \leqslant C N_1^{\alpha}N_2$. Furthermore, we can decompose $f_i$, $i=1,2,3$ into  $L_i$ number of pieces, respectively. This means $f= \sum_c f^c_i$, where $f^c_i$ is supported on
\begin{equation*}
c \leqslant \tau - \omega_{\alpha}(\xi,\eta) \leqslant c + 1
\end{equation*}
for some $c \in \Z$.
To lighten the notation, we still denote the decomposed pieces by $f_{i,N_i,L_i}$, $i=1,2,3$. After a harmless translation, we can suppose that these are supported in the unit neighborhood of the characteristic surface. Then it suffices to prove \eqref{eq:LW} with $L_i$, $i=1,2,3$ replaced by $1$ because the sum over the additional decomposition is handled by the Cauchy-Schwarz inequality.
We consider the characteristic surface $S_i$, $i=1,2$ given by
\begin{equation*}
	S_i = \Big\{(\tau_i,\xi_i,\eta_i)\in \R\times\R \times \R: \tau_i=\xi_i|\xi_i|^{\alpha}+\frac{\eta_i^2}{\xi_i}\Big\}
\end{equation*}
with surface normals (not necessarily of unit length)
\begin{equation}
\label{eq:SurfaceNormals}
	\mathbf{n}_i=\Big (1,-( (\alpha+1)|\xi_i|^{\alpha}-\frac{\eta_i^2}{\xi_i^2}), -\frac{2\eta_i}{\xi_i}  \Big).
\end{equation}

The condition $L_{\max} \ll N_1^\alpha N_2$ by convolution constraint
\begin{equation*}
\tau_1 - \omega_\alpha(\xi_1,\eta_1) + \tau_2 - \omega_\alpha(\xi_2,\eta_2) - (\tau_3 - \omega_\alpha(\xi_1+\xi_2,\eta_1+\eta_2)) = \Omega_{\alpha}(\xi_1,\xi_2,\eta_1,\eta_2)
\end{equation*}
implies that $|\Omega_\alpha| \ll N_1^\alpha N_2$. Hence, we are in the resonant case $|\Omega_\alpha^1| \sim |\Omega_\alpha^2|$ and obtain \eqref{eq:ResofKP}.

We use the anisotropic rescaling $\tau \to \tau / N_1^{\alpha+1}$, $\xi \to \xi' = \xi / N_1$, $\eta \to \eta' = \eta / N_1^{\alpha/2+1}$, which leaves the dispersion relation invariant and normalizes the large $\xi$-frequency. \eqref{eq:ResofKP} becomes
\begin{equation}
\label{eq:RenormalizedGroupVelocities}
\big| \frac{\eta_1'}{\xi_1'} - \frac{\eta_2'}{\xi_2'} \big| \sim 1.
\end{equation}

Recall that we suppose by additional decomposition that the functions have modulations of size $\lesssim 1$.
After rescaling, the functions have modulation thickness $\varepsilon = N_1^{-(\alpha+1)}$. We obtain
\begin{equation}
\label{eq:Rescaling}
\begin{split}
&\quad \int (f_{1,N_1,L_1} * f_{2,N_2,L_2}) f_{3,N_3,L_3} d \xi d \eta d \tau \\
 &= \int \big( f_{1,N_1,L_1} * f_{2,N_2,L_2} \big) f_{3,N_3,L_3} d \xi d \eta d \tau \\
&= N_1^2 N_1^{2(\frac{\alpha}{2}+1)} N_1^{2(\alpha+1)} \int (f'_{1,N_1,L_1} * f'_{2,N_2,L_2}) f'_{3,N_3,L_3} d \xi' d\eta' d\tau'.
\end{split}
\end{equation}

To apply the nonlinear Loomis-Whitney inequality, we need to find a lower bound for the determinant of the normals and check the regularity conditions \emph{Assumption} \ref{assump:LWAssump} (2). To this end, we shall carry out several almost orthogonal decompositions. It should be mentioned that these decompositions already play a crucial role in the proof of \cite[Lemma~5.1~(a)]{IKT}. 

\medskip

For the decompositions \eqref{eq:RenormalizedGroupVelocities} is the key. This allows us to decompose $\eta_i' / \xi_i'$ into intervals of length $1$, and we find that $\eta_2'/\xi_2'$ is correspondingly decomposed into intervals of length $1$. We can suppose that
\begin{equation*}
\big| \frac{\eta_i'}{\xi_i'} \big| \lesssim 1
\end{equation*}
by Galilean invariance $\eta_i' \rightarrow \eta_i' + c \xi_i'$. By this symmetry we can suppose that the intervals in which $\eta_i' / \xi_i'$ are supported, are essentially centered at the origin. 

Since $|\eta_2'| \lesssim N_2 / N_1$, we can now further decompose $\eta_i'$ into intervals of size $N_2/N_1$ by almost orthogonality. Moreover, $\xi_i'$ can be decomposed into intervals of length $N_2/N_1$ because $|\xi_2'| \sim N_2 / N_1$.

\medskip

Now we can establish the lower bound for the unit normals
\begin{equation}
\label{eq:WedgeProduct}
\big| \mathfrak{n}(\xi_1',\eta_1') \wedge \mathfrak{n}(\xi_2',\eta_2') \wedge \mathfrak{n}(\xi_3',\eta_3')| \gtrsim N_2/N_1.
\end{equation}
Above $(\xi_i',\eta_i')$ are confined to balls of radius $c(N_2/N_1)$ with $c \ll 1$. To show \eqref{eq:WedgeProduct}, we firstly suppose the convolution constraint $(\xi_3',\eta_3') = (\xi_1',\eta_1') + (\xi_2',\eta_2')$ and then argue by perturbation. With the unnormalized normals given in \eqref{eq:SurfaceNormals}
we observe that since $|\eta_i'/\xi_i'| \lesssim 1$, the surface normals from \eqref{eq:SurfaceNormals} are already essentially normalized. We compute with details given in the Appendix:
\begin{equation*}
\begin{split}
&\quad |\mathbf{n}(\xi_1',\eta_1') \wedge \mathbf{n}(\xi_2',\eta_2') \wedge \mathbf{n}(\xi_1'+\xi_2',\eta_1'+\eta_2')| \\
&\sim \frac{|\eta_1' \xi_2' - \eta_2' \xi_1'|}{|\xi_1' \xi_2' (\xi_1'+\xi_2')|} \big| (\alpha+1) (|\xi_1'|^\alpha \xi_1' + \xi_2'^\alpha \xi_2' - |\xi_1'+\xi_2'|^\alpha (\xi_1'+\xi_2')) - \frac{(\eta_1' \xi_2' - \eta_2' \xi_1')^2}{\xi_1' \xi_2' (\xi_1'+\xi_2')} \big) \\
&\sim \frac{N_2}{N_1}.
\end{split}
\end{equation*}
For $|\xi_3'| \sim 1$ and $|\eta_3'| \lesssim 1$ we can observe that
$| \partial_{\xi'} \mathbf{n}(\xi'_3,\eta'_3)| + |\partial_{\eta'} \mathbf{n}(\xi'_3,\eta_3')| \lesssim 1.$
For this reason, and that we can decompose the range of $(\xi_i',\eta_i')$ almost orthogonally into balls of radius $c (N_2/N_1)$ for some $c \ll 1$, we can extend the lower bound from the convolution constraint to the general case in \eqref{eq:WedgeProduct}.

\medskip

To invoke Theorem \ref{thm:KSLoomisWhitney}, we still need to ensure the H\"older regularity conditions for the unit normals of the surface. With the constant in \eqref{eq:KSLoomisWhitney} not depending on $b$ and $\beta$, we can carry out a very crude analysis. In the following let $\beta =1$. Since after our reductions the surface normals \eqref{eq:SurfaceNormals} are essentially normalized, it suffices to verify \emph{Assumption} \ref{assump:LWAssump} (2) with normals given by \eqref{eq:SurfaceNormals}. Let $\sigma = (\omega_\alpha(\xi,\eta),\xi,\eta)$ and $\tilde{\sigma} = (\omega_\alpha(\tilde{\xi},\tilde{\eta}),\tilde{\xi},\tilde{\eta})$. It follows
\begin{equation*}
\frac{|\mathbf{n}_i(\sigma) - \mathbf{n}_i(\tilde{\sigma})|}{|\sigma - \tilde{\sigma}|} \leq \frac{|\nabla \omega_\alpha(\xi,\eta) - \nabla \omega_\alpha(\tilde{\xi},\tilde{\eta})|}{|(\xi-\tilde{\xi},\eta - \tilde{\eta})|} \leq | \partial^2 \omega_\alpha(\xi^*,\eta^*)|
\end{equation*}
by the mean-value theorem. Since $\omega_\alpha \in C^2(\R^2 \backslash (\{0 \} \times \R ))$, we have 
\begin{equation*}
|\partial^2 \omega_\alpha(\xi^*,\eta^*)| \lesssim_{N_1,N_2} 1
\end{equation*}
 for $|(\xi^*,\eta^*)| \lesssim 1$ and $|\xi^*| \gtrsim N_2 / N_1$. 

We turn to the second size condition, which is reduced likewise:
\begin{equation*}
\begin{split}
\frac{|\mathbf{n}_i(\sigma) (\sigma - \tilde{\sigma})|}{|\sigma - \tilde{\sigma}|^{1+\beta}} &\leq \frac{| \omega_\alpha(\xi,\eta) - \omega_\alpha(\tilde{\xi},\tilde{\eta}) - \partial_\xi \omega_\alpha(\xi,\eta)(\xi - \tilde{\xi}) - \partial_\eta \omega_\alpha(\xi,\eta)(\eta-\tilde{\eta})| }{|(\xi - \tilde{\xi},\eta - \tilde{\eta})|^2} \\
&\lesssim |\partial^2 \omega_\alpha(\xi^*,\eta^*)| \lesssim_{N_1,N_2} 1
\end{split}
\end{equation*}
by the mean value theorem. This verifies the H\"older condition.

\medskip

We continue with the estimate of \eqref{eq:Rescaling}. An application of the nonlinear Loomis-Whitney inequality provided by Theorem \ref{thm:KSLoomisWhitney} with $\varepsilon = N_1^{-(\alpha+1)}$ and $A \sim N_1 / N_2$ yields
\begin{equation*}
\begin{split}
\big| \int (f'_{1,N_1,L_1} * f'_{2,N_2,L_2}) f'_{3,N_3,L_3} d \xi' d\eta' d\tau' \big| &\lesssim (N_2/N_1)^{-\frac{1}{2}} N_1^{-\frac{3(\alpha+1)}{2}} \prod_{i=1}^3 \| f_{i,N_i,L_i}' \|_{L^2} \\
&\lesssim (N_2/N_1)^{-\frac{1}{2}} N_1^{-\frac{3(\alpha+1)}{2}} N_1^{-\frac{9}{2}-\frac{9 \alpha}{4}} \prod_{i=1}^3 \| f_{i,N_i,L_i} \|_{L^2}.
\end{split}
\end{equation*}
Taking into account the scaling factor from \eqref{eq:Rescaling}, we finish the proof.

\end{proof}



\subsection{Bilinear Strichartz estimates} We employ transversality in the resonant case to derive bilinear estimates. We first note a trivial result.
\begin{lemma}\label{lemma:elem}
	Let $I, J$ be intervals and $f: J \rightarrow \R$ be a smooth function. Then,
	\begin{equation*}
		|\{x: f(x) \in I\}| \leqslant \frac{|I|}{\inf_{y}|f'(y)|}.
	\end{equation*}
	\begin{proof}
		The estimate is a consequence of the mean value theorem. Let $x_1,x_2 \in J$ be such that $f(x_1),f(x_2) \in I$. Then, for $\xi \in (x_1,x_2)$,
		\begin{equation*}
			|x_1-x_2| =\frac{|f(x_1)-f(x_2)|}{|f'(\xi)|} \leqslant \frac{|I|}{\inf_y |f'(y)|}.
		\end{equation*}
	\end{proof}
\end{lemma}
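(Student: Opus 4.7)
The plan is to reduce the measure estimate to a diameter estimate via a direct application of the mean value theorem. If $\inf_y |f'(y)| = 0$ the right-hand side is interpreted as $+\infty$ and there is nothing to prove, so I may assume $m := \inf_{y \in J} |f'(y)| > 0$. Because $f$ is smooth, $f'$ is continuous on the interval $J$, so the condition $|f'| \geqslant m > 0$ forces $f'$ to keep a constant sign; hence $f$ is strictly monotonic on $J$. Consequently the level set $E := \{x \in J : f(x) \in I\}$ is itself an interval (possibly empty), so its Lebesgue measure coincides with its diameter.

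Next I would bound that diameter. For any pair $x_1, x_2 \in E$ one has $f(x_1), f(x_2) \in I$ and therefore $|f(x_1) - f(x_2)| \leqslant |I|$. By the mean value theorem there exists $\xi$ strictly between $x_1$ and $x_2$ such that
\begin{equation*}
|f(x_1) - f(x_2)| \;=\; |f'(\xi)| \, |x_1 - x_2| \;\geqslant\; m \, |x_1 - x_2|,
\end{equation*}
from which $|x_1 - x_2| \leqslant |I|/m$ follows. Taking the supremum over $x_1, x_2 \in E$ yields $\operatorname{diam}(E) \leqslant |I|/m$, and combining with the monotonicity observation above gives the claimed bound on $|E|$.

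There is essentially no obstacle here; the only subtlety worth flagging in the write-up is the passage from the diameter bound (which the two-point argument gives directly) to the measure bound. That step uses smoothness plus the strict lower bound on $|f'|$ to conclude that $E$ is an interval, which is what makes "diameter equals measure" applicable. Everything else is a one-line invocation of the mean value theorem.
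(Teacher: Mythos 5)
Your proposal is correct and follows essentially the same route as the paper: a two-point application of the mean value theorem bounding the diameter of the level set by $|I|/\inf_y|f'(y)|$. The monotonicity detour you flag is not actually needed, since any set is contained in an interval of length equal to its diameter, so the measure bound follows from the diameter bound directly.
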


\begin{proposition}
	Let $\alpha > 0$, suppose that $u, v \in L^2(\R \times \R^2)$ have their Fourier supports in $\tilde{D}_{N_1,L_1}$ and $\tilde{D}_{N_2,L_2}$, respectively, and that for $(\tau_1,\xi_1,\eta_1) \in \text{supp} (\hat{u})$ and $(\tau_2,\xi_2,\eta_2)$ $\in \text{supp} (\hat{v})$, the resonance condition holds. Then,
	\begin{equation}\label{eq:BilinearToProve}
\| uv\|_{L^2_{t,x,y}} \lesssim (L_1L_2)^{\frac{1}{2}} \frac{\min(N_1, N_2)^{\frac{1}{2}}} {\max(N_1,N_2)^{\frac{\alpha}{4}}} \|u\|_{L^2} \|v\|_{L^2}.
	\end{equation}
\begin{proof}
Let $\bar{L}=\max(L_1,L_2)$, $\underline{L}=\min(L_1,L_2)$. Using Plancherel's identity and Cauchy-Schwarz inequality, we have
	\begin{equation}\label{eq:exp}
		\begin{split}
\|uv\|_{L^2}&=\Big \| \int_{\R \times\R^2} \hat{u}(\tau_1,\xi_1,\eta_1)\hat{v}(\tau-\tau_1,\xi-\xi_1,\eta-\eta_1) d\tau_1d\xi_1d\eta_1 \Big\|_{L^2_{\tau,\xi,\eta}}\\
&\lesssim \underline{L}^{\frac{1}{2}} |E(\xi,\eta)|^{\frac{1}{2}} \|u\|_{L^2}\|v\|_{L^2},
\end{split}
	\end{equation}
where the set $E$ is given by
\begin{equation*}
\begin{split}
E(\xi,\eta):=	\{(\xi_1,\eta_1)\in \tilde{A}_{N_1} : &\; |\tau-\omega_{\alpha}(\xi_1,\eta_1) -\omega_{\alpha}(\xi-\xi_1,\eta-\eta_1)| \lesssim \bar{L}, \\
&\quad  (\xi-\xi_1,\eta-\eta_1)\in \tilde{A}_{N_2}\}.
\end{split}
\end{equation*}
The measure of this set can be estimated by Fubini's theorem.
From \eqref{eq:GradDisp}, Lemma \ref{lemma:elem} and almost orthogonality, we have
\begin{equation*}
	|E(\xi,\eta)| = \Big | \int d\xi_1 \int d\eta_1 \mathbf{1}_{E(\xi,\eta)}(\xi_1,\eta_1) \Big| \lesssim \min(N_1,N_2)\frac{\bar{L}}{\max(N_1,N_2)^{\frac{\alpha}{2}}}.
\end{equation*}
Substituting this in \eqref{eq:exp}, we obtain
\begin{equation*}
	\|uv\|_{L^2} \lesssim (L_1L_2)^{\frac{1}{2}} \frac{\min(N_1, N_2)^{\frac{1}{2}}} {\max(N_1,N_2)^{\frac{\alpha}{4}}} \|u\|_{L^2} \|v\|_{L^2}.
\end{equation*}
\end{proof}
\end{proposition}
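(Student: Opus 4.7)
The plan is to pass to the Fourier side and reduce the claim to a measure bound on a sublevel set, which the resonance hypothesis turns into a transversality estimate. By Plancherel's theorem $\widehat{uv} = \hat u \ast \hat v$, and applying Cauchy--Schwarz inside the convolution integral yields
\[
\|uv\|_{L^2_{t,x,y}} \lesssim \underline L^{1/2}\,\sup_{\tau,\xi,\eta}|E(\tau,\xi,\eta)|^{1/2}\,\|u\|_{L^2}\|v\|_{L^2},
\]
where the factor $\underline L^{1/2}$ with $\underline L = \min(L_1,L_2)$ comes from first integrating in $\tau_1$ (the two modulation constraints intersect in an interval of length $\underline L$ for fixed $(\xi_1,\eta_1)$), and $E(\tau,\xi,\eta)$ denotes the set of $(\xi_1,\eta_1)$ subject to the spatial support constraints $(\xi_1,\eta_1)\in\tilde A_{N_1}$, $(\xi-\xi_1,\eta-\eta_1)\in\tilde A_{N_2}$, together with the combined modulation bound $|\tau - \omega_\alpha(\xi_1,\eta_1) - \omega_\alpha(\xi-\xi_1,\eta-\eta_1)| \lesssim \bar L$, where $\bar L = \max(L_1,L_2)$.

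The measure of $E(\tau,\xi,\eta)$ I would estimate by Fubini, integrating in $\eta_1$ with $\xi_1$ fixed. The $\eta_1$-derivative of the phase appearing in the modulation constraint is
\[
\frac{2\eta_1}{\xi_1} - \frac{2(\eta-\eta_1)}{\xi-\xi_1},
\]
which is precisely the second component of $\nabla\omega_\alpha(\xi_1,\eta_1)-\nabla\omega_\alpha(\xi-\xi_1,\eta-\eta_1)$. Under the resonance hypothesis, the bound \eqref{eq:GradDisp} then furnishes the lower bound $\gtrsim \max(N_1,N_2)^{\alpha/2}$ throughout the relevant region. By Lemma \ref{lemma:elem}, the $\eta_1$-section therefore has measure $\lesssim \bar L/\max(N_1,N_2)^{\alpha/2}$, and the spatial support constraints restrict $\xi_1$ to an interval of length $\sim \min(N_1,N_2)$.

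Combining these two factors gives $|E(\tau,\xi,\eta)| \lesssim \min(N_1,N_2)\,\bar L/\max(N_1,N_2)^{\alpha/2}$, and plugging this back into the Cauchy--Schwarz reduction produces the asserted bilinear bound. The main subtlety---rather than a genuine obstacle---is ensuring that the transversality estimate \eqref{eq:GradDisp}, which was derived under the resonance assumption $|\Omega_\alpha^1|\sim|\Omega_\alpha^2|$, is available uniformly over the full range of $(\xi_1,\eta_1)$ being integrated. This is precisely the hypothesis of the proposition: the resonance condition holds on $\operatorname{supp}(\hat u)\times\operatorname{supp}(\hat v)$, so the improved lower bound on the group velocity difference is valid at every point of $E$.
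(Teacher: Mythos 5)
Your proposal is correct and follows essentially the same route as the paper: Plancherel and Cauchy--Schwarz produce the $\underline{L}^{1/2}$ factor from the $\tau_1$-integration, and the measure of the remaining set is bounded via Fubini, using the resonance-induced lower bound \eqref{eq:GradDisp} on $\big|\tfrac{\eta_1}{\xi_1}-\tfrac{\eta_2}{\xi_2}\big|$ together with Lemma \ref{lemma:elem} for the $\eta_1$-sections and the frequency support for the $\xi_1$-range. Your closing remark that the resonance hypothesis on the supports guarantees the transversality bound uniformly on $E$ matches the paper's use of the standing resonance assumption.
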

\begin{remark}
The estimate \eqref{eq:BilinearToProve} remains true if we replace the functions on the left-hand side of \eqref{eq:BilinearToProve} by their complex conjugates.
\end{remark}

The next lemma allows us to handle the non-resonant case when the smallest frequency has size $\lesssim 1$.
\begin{lemma}
	Let $\alpha >0$, $N_1,N_2,N_3 \in 2^{\Z}$ be such that $N_1\ll N_2 \sim N_3$, and $L_1,L_2,L_3 \in 2^{\N_0}$. If $f_i: \R^3 \rightarrow \R_{+}, i=1,2,3$ have their Fourier supports in $\tilde{D}_{N_i,L_i}$, and  $\max(L_1,L_2,L_3)\gtrsim N_1N_2^{\alpha}$, we have
	\begin{equation}\label{eq:GuoNonReso}
		\int_{\R^3} (f_1\ast f_2)\cdot f_3 \lesssim  \frac{(L_1 L_2 L_3)^{1/2}}{\max(L_1,L_2,L_3)^{1/4}} N_2^{-\frac{\alpha}{2}} N_1^{\frac{1}{4}}\|f_1\|_{L^2} \|f_2\|_{L^2} \|f_3\|_{L^2}.
	\end{equation}
\end{lemma}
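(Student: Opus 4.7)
The plan is to use Cauchy--Schwarz to reduce the trilinear form to a bilinear estimate on the convolution $f_1\ast f_2$ restricted to the support of $f_3$, and then to bound the convolution-constraint set by exploiting the hypothesis $L_{\max}\gtrsim N_1 N_2^\alpha$. The required decay $N_2^{-\alpha/2}$ will be extracted from a refined $\eta$--count that uses the quadratic structure of $\Omega_\alpha$ in the $\eta$-variable.

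First, by the symmetry of the trilinear form (for real-valued factors, up to reflections in the convolution structure) I may assume that the factor carrying modulation $L_{\max}$ is the one dualized; denote the corresponding outer frequency $\sigma_{\mathrm{out}}$ and write $\bar L,\underline L$ for the maximum and minimum of the two remaining modulations. Cauchy--Schwarz then gives
\begin{equation*}
\Bigl|\int (f_1\ast f_2)\cdot f_3\Bigr|\leq \bigl(\sup_{\sigma_\mathrm{out}}|E(\sigma_\mathrm{out})|\bigr)^{1/2}\|f_1\|_{L^2}\|f_2\|_{L^2}\|f_3\|_{L^2},
\end{equation*}
where $E(\sigma_\mathrm{out})$ is the set of admissible inner variables.

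Next I would estimate $|E(\sigma_\mathrm{out})|$ by successive integration in $\xi_1$, $\tau_1$, and $\eta_1$. The $\xi_1$-measure is $\lesssim N_1$ from $\xi_1\sim N_1$ together with $\xi_3-\xi_1\sim N_2$, and the $\tau_1$-measure is $\lesssim \underline L$ from the overlap of two modulation slabs. The crucial step is the $\eta_1$-count. Writing
\begin{equation*}
\Omega_\alpha = \Omega_\alpha^1-c_0(\eta_1-\eta_*)^2,\qquad c_0\sim\frac{1}{N_1},\quad |\Omega_\alpha^1|\sim N_1 N_2^\alpha,
\end{equation*}
the overlap condition $|\Omega_\alpha-\sigma_\mathrm{out}|\lesssim \bar L+\underline L$ reduces to the quadratic inequality $|c_0r^2-A|\lesssim \bar L+\underline L$ with $r=\eta_1-\eta_*$ and $A=\Omega_\alpha^1-\sigma_\mathrm{out}$. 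In the generic regime $|A|\gtrsim L_{\max}\gtrsim N_1 N_2^\alpha$, applying Lemma~\ref{lemma:elem} to the derivative $2c_0 r$ on the two branches $r\approx\pm\sqrt{A/c_0}$ (where $|r|\sim\sqrt{|A|N_1}$) yields the refined bound $|\{\eta_1\}|\lesssim(\bar L+\underline L)\sqrt{N_1/|A|}\lesssim (\bar L+\underline L)/N_2^{\alpha/2}$. Combining the three factor bounds gives $|E|\lesssim N_1\underline L\bar L/N_2^{\alpha/2}$, and Cauchy--Schwarz converts this into
\begin{equation*}
\Bigl|\int (f_1\ast f_2)\cdot f_3\Bigr|\lesssim N_1^{1/2}(\bar L\underline L)^{1/2} N_2^{-\alpha/4}\prod_i\|f_i\|_{L^2}.
\end{equation*}
Rewriting $N_1^{1/2}N_2^{-\alpha/4}\lesssim L_{\max}^{1/4}N_2^{-\alpha/2}N_1^{1/4}$ via the hypothesis $L_{\max}\gtrsim N_1 N_2^\alpha$, and using the identity $L_1 L_2 L_3=\bar L\underline L\, L_{\max}$, produces the claimed estimate.

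The main obstacle is the transitional subcase $|A|\lesssim \bar L+\underline L$, in which the refined $\eta_1$-bound degenerates to the weaker $\sqrt{(\bar L+\underline L)N_1}$. Such an $\sigma_\mathrm{out}$ is, however, pinned to a subinterval of $\tilde D_{N_3,L_3}$ of size $\sim N_1N_2^\alpha\ll L_{\max}$ concentrated near $\Omega_\alpha^1$; here I would refine the Cauchy--Schwarz step by replacing $\|f_3\|_{L^2}$ by its restriction to this thinner slice, after which a direct volumetric count in $\sigma_\mathrm{out}$ recovers the same bound. Finally, the symmetry reduction at the beginning must be justified by showing that each of the three possible positions of $L_{\max}$ leads to the same $\xi,\tau,\eta$-count (the $\xi$-measure is always $\sim N_1$ and the $\eta$-second-derivative is always $\sim 1/N_1$ once one pairs the two factors not carrying $L_{\max}$).
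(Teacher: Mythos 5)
Your generic-regime count is sound and, up to bookkeeping, reproduces what the paper obtains; but there are two genuine gaps. First, the symmetrization at the start is not legitimate as stated: if $L_{\max}=L_1$ sits on the \emph{low}-frequency factor and you dualize that factor, the paired set of the two remaining (high-frequency) factors has $\xi$-measure $\lesssim N_2$, not $N_1$, and the quadratic coefficient of $\Omega_\alpha$ in the inner $\eta$-variable is $\sim N_1/N_2^2$, not $1/N_1$. Running your count in that configuration gives $|E|^{1/2}\sim (\bar L\,\underline L)^{1/2}N_2^{1-\alpha/4}N_1^{-1/2}$, which exceeds the right-hand side of \eqref{eq:GuoNonReso} by a factor of order $N_2/N_1$; so the claim that ``each of the three possible positions of $L_{\max}$ leads to the same $\xi,\tau,\eta$-count'' is false, and this configuration needs a separate argument (by reflection symmetry you may only swap the two high-frequency factors, not move the low-frequency one into the dualized slot). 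Second, the transitional subcase $|A|\lesssim \bar L+\underline L$ is only nonempty when $L_{\max}\sim N_1N_2^{\alpha}$ (otherwise $|A|\geqslant|\sigma_{\mathrm{out}}|-|\Omega^1_\alpha|\gtrsim L_{\max}$), and precisely there your patch rests on a false premise: the bad $\tau_3$-slice has thickness $\sim N_1N_2^{\alpha}+\bar L\sim L_{\max}$, \emph{not} $\ll L_{\max}$, so replacing $\|f_3\|_{L^2}$ by its restriction to that slice gains nothing and the sup-over-$\sigma_{\mathrm{out}}$ bound $\sqrt{N_1\underline L\,\sqrt{\bar L N_1}}$ genuinely misses the target unless $\bar L\gtrsim N_1N_2^\alpha$. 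Closing this case within your framework requires a finer decomposition, e.g.\ splitting $\xi_1$ into intervals of length $\sim\bar L/N_2^{\alpha}$ paired with corresponding $O(\bar L)$-thick shifted-modulation slabs of $f_3$ and summing by almost orthogonality; as written, the ``direct volumetric count'' does not recover the bound.

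For comparison, the paper avoids both issues by never taking a supremum over the outer modulation: after substituting $f_i^{\#}(\tau,\xi,\eta)=f_i(\tau+\omega_\alpha(\xi,\eta),\xi,\eta)$ and applying Cauchy--Schwarz in $(\tau_1,\tau_2)$, it reduces to the spatial estimate \eqref{eq:reduced} and performs the change of variables $(\xi_1,\eta_1)\mapsto(\beta_1,\beta_2)$ with $\beta_1=-\Omega_\alpha$ and $\beta_2$ the purely $\xi$-dependent part; the Jacobian carries the factor $(\beta_1+\beta_2)^{-1/2}=(\Omega^2_\alpha)^{-1/2}$, whose singularity at the degenerate set $\Omega^2_\alpha\approx 0$ is integrable, so the region that defeats your sup-bound is absorbed automatically, with the $L_{\max}^{1/4}$ (really $\min(L_{\max},N_1N_2^\alpha)^{1/4}$) emerging from the $\beta_2$-integration while the full $L^2_{\beta_1}$-norm of the third function is retained. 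Your approach is a reasonable fiber-counting variant, but to be complete it must (i) treat the placement of $L_{\max}$ on the low-frequency factor by a different pairing or by the change-of-variables argument, and (ii) replace the single-slice patch in the transitional case by an actual orthogonal decomposition or by the integrable-Jacobian device.
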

\begin{proof}
	The proof is a generalization of the proof of \cite[Lemma 3.1]{Guoetal} to the case $\alpha>2$. We provide the details for the sake of completeness. Define
	\begin{equation*}
		f_i^{\#}(\tau,\xi,\eta) := f_i(\tau+\omega_{\alpha}(\xi,\eta),\xi,\eta), i=1,2,3.
	\end{equation*}
Then, for $i=1,2,3$, $\|f_i^{\#}\|_{L^2} = \|f_i\|_{L^2}$ and $f_i^{\#}$ are functions supported in $\{(\tau_i,\xi_i,\eta_i): |\tau_i|\sim L_i, (\xi,\eta) \in A_{N_i} \}$. The left-hand side of \eqref{eq:GuoNonReso} can be bounded by
\begin{equation*}
\begin{split}
	&\int_{\R^6} f_1^{\#}(\tau_1,\xi_1,\eta_1)f_2^{\#}(\tau_2,\xi_2,\eta_2) \\ 
	&\qquad \times f_3^{\#}(\tau_1+\tau_2-\Omega_\alpha(\xi_1,\xi_2,\eta_1,\eta_2),\xi_1+\xi_2,\eta_1+\eta_2) \prod_{i=1}^2 d\tau_i d\xi_i d\eta_i.
	\end{split}
\end{equation*}
By using Cauchy-Schwarz inequality, it is sufficient to prove
\begin{equation}\label{eq:reduced}
	\begin{split}
	\int_{\R^4} g_1(\xi_1,\eta_1)&g_2(\xi_2,\eta_2)g(-\Omega_\alpha(\xi_1,\xi_2,\eta_1,\eta_2),\xi_1+\xi_2,\eta_1+\eta_2) d\xi_1 d\xi_2 d\eta_1 d\eta_2\\
	& \lesssim L_{\max}^{\frac{1}{4}} N_2^{-\frac{\alpha}{2}} N_1^{\frac{1}{4}}\|g_1\|_{L^2}\|g_2\|_{L^2}\|g\|_{L^2},
	\end{split}
\end{equation}
where $g_i:\R^2 \rightarrow \R_{+}$ are $L^2$ functions supported in $\tilde{A}_{N_i}$, $i=1,2$ and $g:\R^3\rightarrow \R_{+}$ is an $L^2$ function supported in $[-L_{\max}, L_{\max}]\times \tilde{A}_{N_3}$. After a change of variables,
\begin{equation*}
	\xi_2 \to \xi_2-\xi_1, \quad \eta_2 \to  \eta_2-\eta_1,
\end{equation*}
and using the Cauchy-Schwarz inequality, we find that the left-hand side of \eqref{eq:reduced} is dominated by
\begin{equation*}
	\begin{split}
	& \int_{\R^4} g_1(\xi_1,\eta_1)g_2(\xi_2-\xi_1, \eta_2-\eta_1)g(-\Omega_\alpha(\xi_1,\xi_2-\xi_1,\eta_1,\eta_2-\eta_1),\xi_2,\eta_2) \prod_{i=1}^2 d\xi_i d\eta_i \\
	&\lesssim \int_{\R^2} \Big( \int_{\R^2} |g_1(\xi_1,\eta_1)g_2(\xi_2-\xi_1,\eta_2-\eta_1)|^2 d\xi_1d\eta_1 \Big)^{\frac{1}{2}}  \\
	&\quad \times \Big( \int_{\R^2} |g(-\Omega_\alpha(\xi_1,\xi_2-\xi_1,\eta_1,\eta_2-\eta_1),\xi_2,\eta_2)|^2 d\xi_1 d\eta_1\Big)^{\frac{1}{2}} d\xi_2 d\eta_2.
	\end{split}
	\end{equation*}
Define
\begin{equation*}
	\begin{split}
	\beta_1(\eta_1) &= -\Omega_\alpha(\xi_1,\xi_2-\xi_1,\eta_1,\eta_2-\eta_1) \\
	 &= |\xi_2-\xi_1|^{\alpha}(\xi_2-\xi_1) -|\xi_2|^{\alpha}\xi_2 + |\xi_1|^{\alpha}\xi_1 + \frac{(\eta_1\xi_2-\eta_2\xi_1)^2}{\xi_1\xi_2(\xi_2-\xi_1)},\\
	 \beta_2(\xi_1) &= -(|\xi_2-\xi_1|^{\alpha}(\xi_2-\xi_1) -|\xi_2|^{\alpha}\xi_2 + |\xi_1|^{\alpha}\xi_1).
	 \end{split}
\end{equation*}
We have $|\beta_1| \lesssim L_{\max}$, $|\beta_2|\lesssim L_{\max}$, $|\beta_2|\lesssim N_2^{\alpha}N_1$ and using 
\begin{equation*}
	\beta_1+\beta_2 =\frac{(\eta_1\xi_2-\eta_2\xi_1)^2}{\xi_1\xi_2(\xi_2-\xi_1)},
\end{equation*}
we have
\begin{equation*}
	d\xi_1 d\eta_1 = \frac{\xi_1^{\frac{1}{2}}(\xi_2-\xi_1)^{\frac{1}{2}}}{2(\alpha+1)(\beta_1+\beta_2)^{\frac{1}{2}}\xi_2^{\frac{1}{2}} [(\xi_2-\xi_1)^{\alpha} - \xi_1^{\alpha}]} d\beta_1d\beta_2.
\end{equation*}
Using $|\xi_2| \sim |\xi_2-\xi_1|$ and Fubini, we get 
\begin{equation*}
	\Big( \int_{\R^2} |g(\Omega_\alpha(\xi_1,\xi_2-\xi_1,\eta_1,\eta_2-\eta_1),\xi_2,\eta_2)|^2 d\xi_1 d\eta_1\Big)^{\frac{1}{2}} \lesssim \frac{N_1^{\frac{1}{4}}L_{\max}^{\frac{1}{4}}}{N_2^{\frac{\alpha}{2}}} \|g(\cdot, \xi_2,\eta_2)\|_{L^2}.
\end{equation*}
This completes the proof.
\end{proof}

	\section{Quasilinear well-posedness}	
\label{section:Quasilinear}	
This section is devoted to the proof of the theorem below, which yields Theorem \ref{thm:QuasilinearLWP}.
\begin{theorem}
\label{thm:QuasilinearPrecise}
Let $\alpha \in (2,\frac{5}{2}]$, $u_0 \in H^{\infty,0}(\R^2)$, and $s>5-2\alpha$. Then, there exists continuous $T = T(\|u_0\|_{H^{s,0}(\R^2)})>0$ such that there is a unique solution 
\begin{equation}
\label{eq:DataSolutionMapping}
    u=S_T^{\infty}(u_0) \in C([-T,T];H^{\infty,0}(\R^2))
\end{equation}
of \eqref{eq:fKPI}. In addition for $s'\geqslant s$
\begin{equation*}
    \sup_{|t|\leqslant T}\|S^{\infty}_T(u_0)(t)\|_{H^{s',0}(\R^2)} \lesssim C(T,s',\|u_0\|_{H^{s',0}(\R^2)}).
\end{equation*}
Moreover, the mapping given by \eqref{eq:DataSolutionMapping}
extends uniquely to a continuous mapping 
\begin{equation*}
    S_T^{s'}:H^{s',0}(\R^2) \to C([-T,T];H^{s',0}(\R^2)).
\end{equation*}
\end{theorem}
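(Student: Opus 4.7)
The plan is to execute the standard short-time Fourier restriction bootstrap: combine the linear estimate of Lemma \ref{lemma:LinShortTime} with short-time nonlinear and energy estimates so that the resulting system of inequalities closes on a time interval whose length depends only on $\|u_0\|_{H^{s,0}}$. Concretely, for smooth solutions $u$ of \eqref{eq:fKPI} and $s > 5 - 2\alpha$, I would establish
\begin{equation*}
\| u \partial_x u \|_{\mathcal{N}^{s,0}(T)} \lesssim \| u \|_{F^{s,0}(T)}^2
\qquad \text{and} \qquad
\| u \|_{E^{s,0}(T)}^2 \lesssim \| u_0 \|_{H^{s,0}}^2 + \| u \|_{F^{s,0}(T)}^3.
\end{equation*}
Together with Lemma \ref{lemma:LinShortTime} these yield $\|u\|_{F^{s,0}(T)}^2 \lesssim \|u_0\|_{H^{s,0}}^2 + \|u\|_{F^{s,0}(T)}^3$, and a continuity-in-$T$ argument produces a lifespan $T = T(\|u_0\|_{H^{s,0}})$ together with the a priori bound $\|u\|_{F^{s,0}(T)} \lesssim \|u_0\|_{H^{s,0}}$, as well as persistence of regularity in $H^{s',0}$ for $s' \geqslant s$.

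The heart of the matter is the nonlinear estimate. After a Littlewood--Paley decomposition $u = \sum_N P_N u$, one reduces to controlling trilinear expressions $\bigl| \iint P_{N_1} u \cdot \partial_x P_{N_2} u \cdot P_{N_3} v \, dt\, dx\, dy \bigr|$ localized to time windows of length $T^* \sim N_{\max}^{-(5-2\alpha)-\varepsilon}$. By frequency conservation the essential regimes are High $\times$ Low $\to$ High (two of the $N_i$ comparable and much larger than the third) and High $\times$ High $\to$ High (all three comparable); each further splits according to whether the maximum modulation $L_{\max}$ lies above or below the non-resonance threshold $N_{\max}^\alpha N_{\min}$. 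In the non-resonant/high-modulation regime I would invoke the $L^4$ Strichartz bound \eqref{eq:StriEmbed}, or its low-frequency variant \eqref{eq:GuoNonReso} when the smallest frequency is $\lesssim 1$. In the resonant/low-modulation regime I would use the trilinear Loomis--Whitney bound \eqref{eq:LW}, trading it off against the bilinear Strichartz estimate \eqref{eq:BilinearToProve} precisely when the smallest frequency drops below $N_{\max}^{-\kappa}$ for some small $\kappa > 0$, as foreshadowed in the introduction. The interplay between the time-localization scale $N^{-(5-2\alpha)-\varepsilon}$ and the gain $N_1^{-3\alpha/4+1/2}$ in Lemma \ref{lemma:LW} is exactly what forces the regularity threshold $s > 5 - 2\alpha$. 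The energy estimate is derived analogously: pairing $P_N (\partial_t u - D_x^\alpha \partial_x u - \partial_x^{-1} \partial_y^2 u) = P_N(u \partial_x u)$ with $P_N u$ in $L^2_{x,y}$ and integrating over $[-T,T]$ reduces the nonlinear contribution, after a commutator/symmetrisation step that crucially exploits the real-valuedness of $u$, to a symmetric trilinear form to which the same Loomis--Whitney and Strichartz bounds apply.

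With the a priori bound in hand, existence proceeds by regularising the initial data via $u_{0,n} = P_{\leqslant 2^n} u_0 \in H^{\infty,0}$, invoking a classical well-posedness theory at high regularity to produce smooth solutions $u_n$, and then using the a priori bound itself to extend each $u_n$ to the common interval $[-T,T]$ with uniform $F^{s,0}(T)$ norm. A parallel difference estimate applied to $v_{n,m} = u_n - u_m$ at a lower regularity $s' < s$ -- noting that the difference equation is linear in $v_{n,m}$ with inhomogeneity schematically of the form $\partial_x(v_{n,m} \cdot (u_n + u_m))$ -- delivers a Cauchy sequence in $C([-T,T]; H^{s',0})$, and a Bona--Smith-type interpolation with the high-regularity a priori bound upgrades this to continuity of $S_T^s$ on $H^{s,0}$. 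Uniqueness is a corollary of the same difference estimate. The principal obstacle throughout is the resonant low-modulation trilinear contribution -- this is exactly where the nonlinear Loomis--Whitney inequality and the transversality-based bilinear Strichartz estimate proved in Section \ref{section:Resonance} are indispensable, and where any weakening of them would destroy the threshold $s > 5 - 2\alpha$.
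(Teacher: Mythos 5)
Your proposal follows essentially the same route as the paper: the same short-time bootstrap (Lemma \ref{lemma:LinShortTime} combined with the bilinear estimates of Proposition \ref{prop:ShortTimeBilinear} and the energy estimates of Proposition \ref{prop:EnergyEstSol}), the same frequency/modulation case analysis using \eqref{eq:LW}, \eqref{eq:BilinearToProve}, \eqref{eq:StriEmbed}, \eqref{eq:GuoNonReso}, existence for smooth data from the classical $H^{2,0}$ theory, and the same Bona--Smith-type difference-estimate scheme (Proposition \ref{prop:fKPEnergyEstDiff}) for the Cauchy property, continuity, and uniqueness. One small correction of attribution: in the paper the threshold $s>5-2\alpha$ is not forced by the interplay of \eqref{eq:LW} with the time localization in the a priori estimates (those close already for $s>6-\tfrac{11\alpha}{4}+\varepsilon$, resp.\ $s\geqslant 0$), but by the energy estimate for the difference equation, where the derivative falls on the high-frequency factor $u_1$, no integration by parts is available, and the time-localization loss $N^{(5-2\alpha)+\varepsilon}$ must be absorbed by the regularity.
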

Existence of local-in-time solutions for initial data in $H^{2,0}$ to the KP-I equation was proved by Molinet--Saut--Tzvetkov \cite{MolinetSautTzvetkov2007}. The proof is a non-trivial variant of the energy method, which relies on commutator estimates. Also, persistence of regularity is discussed in \cite{MolinetSautTzvetkov2007}. These arguments transpire to the fKP-I case and show the existence of a mapping $S_T^\infty$.

		\subsection{Short-time bilinear estimates}
	In this subsection, we prove short-time bilinear estimates which we need to control the nonlinearity. 
	\begin{proposition}\label{prop:ShortTimeBilinear}
		Let $2<\alpha\leqslant \frac{5}{2}$,  $T\in(0,1]$. There is $\varepsilon = \varepsilon(\alpha)$ such that for the time localization $T(N) = N^{-(5-2\alpha)-\varepsilon}$, such that for $s'\geqslant 0$, and $ u,v \in F^{s',0}(T)$, the following estimate holds:
			\begin{equation}
				\| \partial_x(uv)\|_{\mathcal{N}^{s',0}(T)} \lesssim  \|u\|_{F^{0,0}(T)} \|v\|_{F^{s',0}(T)} + \|v\|_{F^{0,0}(T)} \|u\|_{F^{s',0}(T)}.
			\end{equation}
	\end{proposition}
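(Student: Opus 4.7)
The plan is to reduce the bilinear estimate to an atomic trilinear form by duality, then to split into dyadic frequency interactions and, within each, into resonant and non-resonant modulation regimes. After Littlewood--Paley decomposition of $u,v$ and of the output, the left-hand side is controlled, via the definitions of $F_N$ and $\mathcal{N}_N$, by
\begin{equation*}
\sum_{N_1,N_2,N_3} N_3^{s'} \, \sup_{t_{N_3}\in \R} \bigl\| (\tau-\omega_\alpha(\xi,\eta) + i T(N_3)^{-1})^{-1} \mathcal{F}_{t,x,y}\bigl[ \eta_0(T(N_3)^{-1}(t-t_{N_3})) P_{N_3} \partial_x (\tilde{u}_{N_1} \tilde{v}_{N_2}) \bigr] \bigr\|_{X_{N_3}} ,
\end{equation*}
where $\tilde{u}_{N_i},\tilde{v}_{N_j}$ are extensions realizing the $F_{N_i},F_{N_j}$ norms. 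Because $T(N_3)=N_3^{-(5-2\alpha)-\varepsilon} \leqslant \min(T(N_1),T(N_2))$ for $N_3\gtrsim N_1\sim N_2$, one partitions the $F_{N_1}$ and $F_{N_2}$ time windows into subintervals adapted to the shortest scale $T(N_3)$; the number of such pieces is harmless under \eqref{eq:TimeMult}. Dualizing the $X_{N_3}$ norm and decomposing each factor further in modulation $L_i$, the estimate reduces to a trilinear bound on
\begin{equation*}
\Big| \int (f_{1,N_1,L_1} \ast f_{2,N_2,L_2}) \cdot f_{3,N_3,L_3} \,d\tau d\xi d\eta \Big| , \qquad \prod_i L_i^{-\frac{1}{2}} \|f_i\|_{L^2} \leqslant 1,
\end{equation*}
where the $L_i$ carry an extra factor $L_3^{-1}$ from the modulation resolvent when $L_3 \leqslant T(N_3)^{-1}$, and are otherwise free to grow.

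The frequency sum splits into three regimes dictated by the convolution constraint: high$\times$high$\to$low ($N_1\sim N_2 \gg N_3$), high$\times$low$\to$high ($N_1\sim N_3\gg N_2$, or the symmetric one), and all-comparable. In each regime I further separate by whether $L_{\max}\gtrsim N_{\max}^\alpha N_{\min}$ (non-resonant) or $L_{\max}\ll N_{\max}^\alpha N_{\min}$ (resonant). In the non-resonant regime I invoke \eqref{eq:GuoNonReso}, which produces the factor $L_{\max}^{-1/4} N_{\max}^{-\alpha/2} N_{\min}^{1/4}$; together with the modulation resolvent this offsets the derivative $N_3$ from $\partial_x$ and gives a summable bound once $s'\geqslant 0$ because $\alpha>2$. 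In the resonant regime, the genuine trilinear estimate Lemma \ref{lemma:LW} contributes $N_1^{-3\alpha/4+1/2} N_{\min}^{-1/2}$; the $N_3$ loss from $\partial_x$ is then absorbed provided $N_{\min}$ is not too small. For the complementary sub-case $N_{\min} \lesssim N_{\max}^{-\kappa}$ I use instead the bilinear Strichartz estimate \eqref{eq:BilinearToProve}, which supplies the additional factor $N_{\max}^{-\alpha/4}$ and beats the derivative loss thanks to $\alpha>2$. The high$\times$low$\to$high case is the mildest: only the low factor $N_2$ needs to be summed and can be handled either by the Strichartz embedding \eqref{eq:StriEmbed} via H\"older or by the same trilinear/Strichartz dichotomy.

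The main obstacle is the resonant high$\times$high$\to$low interaction with very small output $N_3$. Here the derivative $\partial_x$ is harmless on the output side but the required frequency-dependent time localization $T(N_3)^{-1} = N_3^{(5-2\alpha)+\varepsilon}$ acts against us, because after partitioning the $F_{N_1},F_{N_2}$ windows there is a factor counting the number of subintervals comparable to $T(N_3)/T(N_{\max})$. This is precisely balanced by interpolating the Loomis--Whitney bound (which improves as $N_3$ grows) with the bilinear Strichartz bound (which improves as $N_3$ shrinks); choosing $\kappa=\kappa(\alpha,s')$ as the crossover threshold and then $\varepsilon=\varepsilon(\alpha)$ small enough so that $s>5-2\alpha$ still leaves a positive power to sum over the dyadic scales closes the argument. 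Summation in the modulation variables $L_i$ is carried out using \eqref{prop} and is standard once the above dyadic gains are in place.
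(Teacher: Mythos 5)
Your overall architecture is the same as the paper's: Littlewood--Paley decomposition into high$\times$low, high$\times$high$\to$low, comparable, and very-low interactions, the resonant/non-resonant dichotomy according to $L_{\max}$ versus $N_{\max}^{\alpha}N_{\min}$, the trilinear Loomis--Whitney bound \eqref{eq:LW} together with the bilinear Strichartz estimate \eqref{eq:BilinearToProve} in the resonant regime, \eqref{eq:GuoNonReso} and $L^4$ Strichartz otherwise, and the identification of the high$\times$high$\to$low interaction as the one forcing the constraint on $\varepsilon$ through the subinterval count $T(N_3)/T(N_{\max})$. However, the quantitative claims that carry the actual content of the proposition are either missing or wrong. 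Most seriously, your statement that in the small-$N_{\min}$ resonant high$\times$low$\to$high regime the bilinear Strichartz estimate ``supplies the additional factor $N_{\max}^{-\alpha/4}$ and beats the derivative loss thanks to $\alpha>2$'' is false as written: the derivative costs $N_{\max}$ and $N_{\max}^{1-\alpha/4}$ grows for every $\alpha<4$. The paper closes this case only by combining two further gains that your sketch does not invoke: the smallness $N_{\min}^{1/2}\lesssim N_{\max}^{\frac{3}{2}-\frac{3\alpha}{4}+}$ built into the crossover, and the factor $N^{(\alpha-\frac{5}{2})-\frac{\varepsilon}{2}}$ obtained from summing $L^{-1/2}$ over modulations above the floor $L\geqslant N^{(5-2\alpha)+\varepsilon}$ imposed by the short-time space (this is where the frequency-dependent localization actually helps). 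Consequently the crossover must sit at $N_{\min}\sim N_{\max}^{(3-\frac{3\alpha}{2})+}$, so $\kappa=\kappa(\alpha)$; it cannot depend on $s'$, since the estimate is asserted for all $s'\geqslant 0$ with a single $\varepsilon=\varepsilon(\alpha)$.

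Two further points. The appeal to ``$s>5-2\alpha$'' is out of place: that threshold comes from the energy estimates, not from this bilinear estimate, which must hold for $s'\geqslant 0$; invoking it suggests the summation argument has not actually been checked. And the exponent bookkeeping in the critical resonant high$\times$high$\to$low case — paying the loss $N\,(N_1/N)^{(5-2\alpha)+\varepsilon}$ against the gain $N_1^{-\frac{3\alpha}{4}+\frac{1}{2}}N^{-\frac{1}{2}}(L_1L_2L)^{1/2}$ from \eqref{eq:LW}, and the analogous non-resonant computations — is exactly what produces the admissible window (constraints of the type $\varepsilon<\frac{11\alpha}{4}-\frac{11}{2}$ and $\varepsilon<\frac{21\alpha}{8}-\frac{21}{4}$, nonempty precisely because $\alpha>2$); asserting that interpolation ``closes the argument'' without this computation leaves the existence of $\varepsilon(\alpha)$ unproved. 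A minor slip: your description of the resolvent weight is reversed — $(\tau-\omega_\alpha+iT(N)^{-1})^{-1}$ has size $\sim L^{-1}$ for $L\gtrsim T(N)^{-1}$ and $\sim T(N)$ for $L\lesssim T(N)^{-1}$.
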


\begin{remark}
	As a particular case of the above proposition, we obtain
	\begin{equation*}
		\| \partial_x(uv)\|_{\mathcal{N}^{0,0}(T)} \lesssim \| u\|_{F^{0,0}(T)} \|v\|_{F^{0,0}(T)}.
	\end{equation*}
\end{remark}

Proposition  \ref{prop:ShortTimeBilinear} will be proved by means of dyadic estimates which we prove in the following. We first consider the $High \times Low \rightarrow High$ interaction. In this case, we can choose the time localization $T(N) = N^{-(5-2\alpha)-\varepsilon}$ for any $\varepsilon>0$ to prove a favorable estimate.
\begin{lemma}\label{lemma:fKPHL}
	Let $\varepsilon > 0$ and the time localization be given by $T(N) = N^{-(5-2\alpha)-\varepsilon}$.
	Let $N_1,N_2,N \in 2^{\N_0}$ be such that $N_2 \ll N_1 \sim N$ and $u_{N_1}\in F_{N_1}$, $v_{N_2}\in F_{N_2}$. Then, the following estimate holds for some $c(\varepsilon) > 0$:
	\begin{equation}\label{eq:fKPHL}
		\| P_N (\partial_x(u_{N_1}v_{N_2}))\|_{\mathcal{N}_N} \lesssim N_1^{-c(\varepsilon)}\|u_{N_1}\|_{F_{N_1}} \|v_{N_2}\|_{F_{N_2}}.
	\end{equation}
\begin{proof}
	Using the definition of the $\mathcal{N}_N$ norm, we can bound the left-hand side of \eqref{eq:fKPHL} by
	\begin{equation*}
	\begin{split}
		&\; \sup_{t_N \in \R} \| (\tau-\omega_{\alpha}(\xi,\eta)+iN^{(5-2\alpha)+\varepsilon})^{-1} N \mathbf{1}_{A_N}(\xi) \mathcal{F}[u_{N_1}\cdot \eta_0(N^{(5-2\alpha)+\varepsilon}(t-t_N))] \\
		&\quad \ast \mathcal{F}[v_{N_2} \cdot \eta_0(N^{(5-2\alpha)+\varepsilon}(t-t_N))]\|_{X_N}
		\end{split}
	\end{equation*}
Let 
\begin{equation*}
	f_{N_1}:= \mathcal{F}[u_{N_1} \cdot\eta_0(N^{(5-2\alpha)+\varepsilon}(t-t_N)) ] \quad \text{ and }\quad  g_{N_2}:= \mathcal{F}[v_{N_2} \cdot \eta_0(N^{(5-2\alpha)+\varepsilon}(t-t_N))].
\end{equation*}
Using the properties \eqref{prop} and \eqref{eq:TimeMult}, it suffices to prove that if $L_1,L_2 \geqslant N^{(5-2\alpha)+\varepsilon}$ and 
\begin{equation*}
	f_{N_1,L_1}, g_{N_2,L_2}:\R \times \R^2 \rightarrow \R_{+}
\end{equation*}
are functions supported in $D_{N_1,L_1}$ and $D_{N_2,L_2}$ and for $L_i = N^{(5-2\alpha)+\varepsilon}$ in  $D_{N_i,\leqslant L_i}$, respectively, then
\begin{equation}\label{eq:ToProveHL}
	N \sum_{L\geqslant N^{(5-2\alpha)+\varepsilon}} L^{-\frac{1}{2}}\| \mathbf{1}_{D_{N,\leqslant L}} (f_{N_1,L_1}\ast g_{N_2,L_2})\|_{L^2} \lesssim N_1^{-c(\varepsilon)} L_1^{\frac{1}{2}} \|f_{N_1,L_1}\|_{L^2} ~L_2^{\frac{1}{2}}\|g_{N_2,L_2}\|_{L^2}.
\end{equation}
We also note that by duality, it suffices to prove:
\begin{equation}\label{eq:DualHL}
	\Big | \int (f_{N_1,L_1} \ast g_{N_2,L_2})\cdot h_{N,L}\Big| \lesssim N_1^{-1-c(\varepsilon)} L_1^{\frac{1}{2}}\|f_{N_1,L_1}\|_{L^2}~ L_2^{\frac{1}{2}}\|g_{N_2,L_2}\|_{L^2}~L^{\frac{1}{2}-}\|h_{N,L}\|_{L^2},
\end{equation}
where $h_{N,L}$ is supported in $D_{N,L}$.\\
Let $L_{\max} = \max(L_1,L_2,L)$. In case $N_2 = 1$, we make an additional dyadic decomposition in the low frequencies. Now we abuse notation, and let $N_2 \in 2^{\Z}$ denote the dyadic frequency. We consider two cases:

\medskip

\noindent $\bullet$ \underline{$L_{\max} \leqslant N_2 N_1^{\alpha}$}:  For the case $N_2\gtrsim N_1^{(3-\frac{3\alpha}{2})+\varepsilon_2}$, using the estimate \eqref{eq:LW}, the left-hand side of \eqref{eq:DualHL} can be bounded by
\begin{equation*}
	\begin{split}
&~\quad N_1^{-\frac{3}{4}\alpha +\frac{1}{2}} N_2^{-\frac{1}{2}} L_1^{\frac{1}{2}} \|f_{N_1,L_1}\|_{L^2} ~L_2^{\frac{1}{2}} \|g_{N_2,L_2}\|_{L^2} L^{\frac{1}{2}}\|h_{N,L}\|_{L^2} \\
&\lesssim N_1^{-1-\varepsilon_2/2}L_1^{\frac{1}{2}} \|f_{N_1,L_1}\|_{L^2} ~L_2^{\frac{1}{2}} \|g_{N_2,L_2}\|_{L^2} L^{\frac{1}{2}} \|h_{N,L}\|_{L^2}.
\end{split}
\end{equation*}
To decrease the power of $L$ by $\varepsilon_3$, we use that $L \leqslant N_1^{\alpha+1}$, which yields
\begin{equation*}
\lesssim N_1^{-1-\varepsilon_2/2+(\alpha+1) \varepsilon_3 } L_1^{\frac{1}{2}} \| f_{N_1,L_1} \|_{L^2} ~L_2^{\frac{1}{2}} \| g_{N_2,L_2} \|_{L^2}  L^{\frac{1}{2}-\varepsilon_3} \| h_{N,L} \|_{L^2}.
\end{equation*}
This is acceptable choosing $\varepsilon_3 = \varepsilon_3(\varepsilon_2)$.

In the case $N_2\lesssim N_1^{(3-\frac{3\alpha}{2})+\varepsilon_2}$, using the bilinear Strichartz estimate \eqref{eq:BilinearToProve}, we have
\begin{equation*}
\text{LHS of }\eqref{eq:ToProveHL} \lesssim N N^{(\alpha-\frac{5}{2})-\frac{\varepsilon}{2}} \frac{N_2^{\frac{1}{2}}}{N_1^{\frac{\alpha}{4}}} L_1^{\frac{1}{2}} \|f_{N_1,L_1}\|_{L^2} L_2^{\frac{1}{2}} \|g_{N_2,L_2}\|_{L^2}.
\end{equation*}
We obtain
\begin{equation*}
\text{LHS of }\eqref{eq:ToProveHL} \lesssim N^{\frac{\varepsilon_2}{2}-\frac{\varepsilon}{2}} L_1^{\frac{1}{2}} \|f_{N_1,L_1}\|_{L^2} L_2^{\frac{1}{2}} \|g_{N_2,L_2}\|_{L^2},
\end{equation*}
which is acceptable for $N_2 \gtrsim 1$ choosing $\varepsilon_2<\varepsilon$.
If $N_2 \lesssim 1$, we interpolate with the estimate (note that the power of $N_2$ is positive, whereas the power of $N$ is negative)
\begin{equation*}
\text{LHS of }\eqref{eq:ToProveHL} \lesssim N_2^{\frac{1}{2}} N^{\frac{3\alpha}{4}-\frac{3}{2}-\frac{\varepsilon}{2}} L_1^{\frac{1}{2}} \|f_{N_1,L_1}\|_{L^2} L_2^{\frac{1}{2}} \|g_{N_2,L_2}\|_{L^2}.
\end{equation*}
to find
\begin{equation*}
\text{LHS of }\eqref{eq:ToProveHL} \lesssim N_2^{c_1(\varepsilon)} N_1^{-c_2(\varepsilon)} L_1^{\frac{1}{2}} \|f_{N_1,L_1}\|_{L^2} L_2^{\frac{1}{2}} \|g_{N_2,L_2}\|_{L^2}
\end{equation*}
for $c_1,c_2 > 0$, which is acceptable for $N_2 \lesssim 1$ because
the additional factor $N_2^{c_1(\varepsilon)}$ can be used to carry out the summation in $N_2$.

\medskip

\noindent $\bullet$ \underline{$L_{\max} \geqslant N_1^{\alpha}N_2$}: In the case $N_2\gtrsim 1$, we assume that $L \geqslant N_1^\alpha N_2$ (other cases give improved estimates). We use \eqref{eq:StriEmbed} as follows:
\begin{equation*}
	\begin{split}
	\text{LHS of }\eqref{eq:ToProveHL}	&\lesssim NN_1^{-\frac{\alpha}{2}} N_2^{-\frac{1}{2}} \|\mathcal{F}^{-1}(f_{N_1,L_1})\|_{L^4} \|\mathcal{F}^{-1}(g_{N_2,L_2})\|_{L^4} \\
		&\lesssim N N_1^{\frac{2-\alpha}{8}} N_2^{\frac{2-\alpha}{8}} N_1^{-\frac{\alpha}{2}} N_2^{-\frac{1}{2}} L_1^{\frac{1}{2}} \|f_{N_1,L_1}\|_{L^2} L_2^{\frac{1}{2}} \|g_{N_2,L_2}\|_{L^2} \\
		&=N_1^{\frac{5}{4}-\frac{5\alpha}{8}} N_2^{-\frac{1}{4}-\frac{\alpha}{8}}
	 L_1^{\frac{1}{2}} \|f_{N_1,L_1}\|_{L^2} L_2^{\frac{1}{2}} \|g_{N_2,L_2}\|_{L^2}.
	\end{split}
\end{equation*}
In case $N_2\lesssim 1$, we assume $L_{\max} = L$ (the other cases are similar). We use the estimate \eqref{eq:GuoNonReso}:
\begin{equation*}
	\begin{split}
	&\; \quad \Big| \int (f_{N_1,L_1}\ast g_{N_2,L_2})\cdot h_{N,L}\Big| \\
	 &\lesssim N_2^{\frac{1}{4}}N_1^{-\frac{\alpha}{2}} (L_1L_2)^{\frac{1}{2}} L^{\frac{1}{4}}\|f_{N_1,L_2}\|_{L^2}\|g_{N_2,L_2}\|_{L^2}\|h_{N,L}\|_{L^2}\\
	&\lesssim N_2^{\frac{1}{4}} N_1^{-\frac{\alpha}{2}}N_2^{-\frac{1}{4}+}N_1^{-\frac{\alpha}{4}+}L_1^{\frac{1}{2}}\|f_{N_1,L_1}\|_{L^2}~L_2^{\frac{1}{2}}\|g_{N_2,L_2}\|_{L^2}~L^{\frac{1}{2}-}\|h_{N,L}\|_{L^2}\\
	&\lesssim N_1^{-\frac{3\alpha}{4}+} N_2^{0+}L_1^{\frac{1}{2}}\|f_{N_1,L_1}\|_{L^2}~L_2^{\frac{1}{2}}\|g_{N_2,L_2}\|_{L^2}~L^{\frac{1}{2}-}\|h_{N,L}\|_{L^2}, 
	\end{split}
\end{equation*}
which is sufficient  to obtain \eqref{eq:fKPHL} after summing up.
	\end{proof}
	\end{lemma}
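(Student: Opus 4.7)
The plan is to unfold the definitions of $F_{N_i}$ and $\mathcal{N}_N$: using the resolvent multiplier $(\tau - \omega_\alpha + iN^{(5-2\alpha)+\varepsilon})^{-1}$ together with \eqref{prop}, \eqref{eq:TimeMult} and a modulation dyadic decomposition, the claim reduces to proving, for $L_1, L_2 \geq N^{(5-2\alpha)+\varepsilon}$,
\begin{equation*}
N \sum_{L \geq N^{(5-2\alpha)+\varepsilon}} L^{-1/2} \| \mathbf{1}_{D_{N,\leq L}}(f_{N_1,L_1} * g_{N_2,L_2}) \|_{L^2} \lesssim N_1^{-c(\varepsilon)} L_1^{1/2} L_2^{1/2} \|f_{N_1,L_1}\|_{L^2} \|g_{N_2,L_2}\|_{L^2}.
\end{equation*}
By duality this becomes a trilinear convolution estimate against a third factor $h_{N,L}$ supported in $D_{N,L}$, and the proof splits according to whether we are in the resonant regime $L_{\max} = \max(L_1,L_2,L) \ll N_1^\alpha N_2$ or the non-resonant regime $L_{\max} \gtrsim N_1^\alpha N_2$, with $N_1^\alpha N_2$ being the typical size of the $\Omega_\alpha^1$ part of the resonance function.

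In the resonant regime I would apply the trilinear nonlinear Loomis--Whitney bound of Lemma \ref{lemma:LW}, which yields a factor $N_1^{-3\alpha/4 + 1/2} N_2^{-1/2}$. After multiplying by $N \sim N_1$ and absorbing the $L$-sum using $L \lesssim N_1^{\alpha+1}$ at a cost $N_1^{(\alpha+1)\varepsilon_3}$, this is acceptable provided $N_2^{-1/2}$ can be controlled, which I would ensure by imposing a threshold $N_2 \gtrsim N_1^{(3 - 3\alpha/2) + \varepsilon_2}$, leaving a gain $N_1^{-\varepsilon_2/2 + (\alpha+1)\varepsilon_3}$ negative for $\varepsilon_3 \ll \varepsilon_2$. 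Below the threshold I would instead invoke the bilinear Strichartz estimate \eqref{eq:BilinearToProve}, providing $N_2^{1/2} N_1^{-\alpha/4}$; the $L$-summation then yields the prefactor $N_1^{\alpha - 3/2 - \varepsilon/2 - \alpha/4} N_2^{1/2}$, and inserting $N_2 \lesssim N_1^{(3-3\alpha/2)+\varepsilon_2}$ produces the gain $N_1^{-(\varepsilon - \varepsilon_2)/2}$, acceptable for $\varepsilon_2 < \varepsilon$. When additionally $N_2 \lesssim 1$, interpolation against a parallel bound in which the roles of $N_1$ and $N_2$ are swapped gives a positive power of $N_2$ that renders the dyadic summation over $N_2 \lesssim 1$ convergent.

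In the non-resonant regime the large modulation provides a lossless gain. For $N_2 \gtrsim 1$ I would apply the $L^4$ Strichartz embedding \eqref{eq:StriEmbed} to both factors, producing the $N_1$-decay $N_1^{5/4 - 5\alpha/8}$ times $N_2^{-1/4 - \alpha/8}$, which is favourable. For $N_2 \lesssim 1$ the tailored non-resonant estimate \eqref{eq:GuoNonReso} applies directly and yields the required negative $N_1$-power after extracting an $L^{1/2-}$ factor from $L_{\max} \gtrsim N_1^\alpha N_2$.

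The principal obstacle is the borderline resonant case when $\alpha$ is near $2$: both the trilinear Loomis--Whitney estimate and the bilinear Strichartz estimate are only marginally subcritical, and the gain $N_1^{-c(\varepsilon)}$ emerges only through a careful tuning of $\varepsilon_2 < \varepsilon$ and the auxiliary parameter $\varepsilon_3 = \varepsilon_3(\varepsilon_2)$ controlling the modulation-summation loss. The $N_2 \lesssim 1$ subcase is the other subtle point, requiring interpolation between two bounds whose $N_2$-dependence has opposite signs in order to sum over dyadic $N_2 \lesssim 1$.
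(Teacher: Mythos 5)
Your proposal is correct and follows essentially the same route as the paper's proof: the same reduction via duality to the dyadic trilinear estimate, the same split at $L_{\max} \sim N_1^{\alpha}N_2$, the same threshold $N_2 \sim N_1^{(3-\frac{3\alpha}{2})+\varepsilon_2}$ separating the Loomis--Whitney and bilinear Strichartz applications in the resonant case, and the same use of the $L^4$ Strichartz embedding and of \eqref{eq:GuoNonReso} in the non-resonant case, with the identical tuning $\varepsilon_2<\varepsilon$, $\varepsilon_3=\varepsilon_3(\varepsilon_2)$. The only slight mis-description is in the subcase $N_2\lesssim 1$ of the resonant regime: the second bound used for interpolation is just the bilinear Strichartz estimate with the factor $N_2^{\frac{1}{2}}$ kept explicit (rather than a bound with the roles of $N_1$ and $N_2$ swapped, which is not available), but this does not change the argument.
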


Next, we consider the $High \times High \rightarrow Low$ interaction. In this case we have to increase time localization to match the localization of the input frequencies. This will give a constraint on $\varepsilon$ because the larger $\varepsilon$ becomes, the more we lose when adding time localization.
\begin{lemma}\label{lemma:fKPHH}
For any $\alpha \in (2,\frac{5}{2}]$ there is $\varepsilon(\alpha) >0$ such that for the time localization $T=T(N)=N^{-(5-2\alpha)-\varepsilon}$ we have the following:
Let $N_1,N_2, N \in 2^{\N_0}$ be such that $N_1 \geqslant 2^{10}$, $N \ll N_1 \sim N_2$, and $u_{N_1}\in F_{N_1}, v_{N_2}\in F_{N_2}$. Then, the following estimate holds for some $c(\varepsilon) > 0$:
\begin{equation}\label{eq:ToProveHH}
	\|P_{N}(\partial_x(u_{N_1}v_{N_2}))\|_{\mathcal{N}_N} \lesssim N_1^{-c(\varepsilon)} \|u_{N_1}\|_{F_{N_1}} \|v_{N_2}\|_{F_{N_2}}.
\end{equation}
\end{lemma}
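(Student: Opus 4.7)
The strategy mirrors that of Lemma~\ref{lemma:fKPHL}, but now the output frequency $N$ is much smaller than the input frequencies $N_1\sim N_2$, so the output's time window $N^{-(5-2\alpha)-\varepsilon}$ is \emph{larger} than the input scale $N_1^{-(5-2\alpha)-\varepsilon}$. This forces an extra time-subdivision step, which produces a loss that must be absorbed by the sharp bilinear and trilinear estimates from Section~\ref{section:Resonance}. The combinatorial balance between this loss and the bilinear gain is what dictates the range $\alpha>2$ and the precise choice of $\varepsilon=\varepsilon(\alpha)$.

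\emph{Step 1 (Time subdivision).} Fix $t_N\in\R$ and partition the cutoff at the output scale by a smooth partition of unity at the finer input scale, writing
\begin{equation*}
\eta_0\bigl(N^{(5-2\alpha)+\varepsilon}(t-t_N)\bigr)=\sum_j\eta_0\bigl(N^{(5-2\alpha)+\varepsilon}(t-t_N)\bigr)\gamma_j(t), \qquad \gamma_j(t)=\gamma\bigl(N_1^{(5-2\alpha)+\varepsilon}(t-t_j)\bigr),
\end{equation*}
with $t_j$ equispaced at scale $N_1^{-(5-2\alpha)-\varepsilon}$. Only $\sim(N_1/N)^{(5-2\alpha)+\varepsilon}$ values of $j$ contribute. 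Setting $u_{N_1,j}:=u_{N_1}\gamma_j^{1/2}$, $v_{N_2,j}:=v_{N_2}\gamma_j^{1/2}$, the triangle inequality reduces matters to bounding a single piece; for each piece $\|\mathcal{F}_{t,x,y} u_{N_1,j}\|_{X_{N_1}}\lesssim\|u_{N_1}\|_{F_{N_1}}$ and analogously for $v$.

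\emph{Step 2 (Dyadic decomposition and case analysis).} Decomposing dyadically in modulation with $L_1,L_2\geqslant N_1^{(5-2\alpha)+\varepsilon}$ and $L\geqslant N^{(5-2\alpha)+\varepsilon}$, and dualizing as in Lemma~\ref{lemma:fKPHL}, I reduce to
\begin{equation*}
\Bigl|\int(f_{N_1,L_1}\ast g_{N_2,L_2})\cdot h_{N,L}\Bigr|\lesssim N^{-1}(N/N_1)^{(5-2\alpha)+\varepsilon}N_1^{-c(\varepsilon)}L_1^{1/2}L_2^{1/2}L^{1/2-}\|f\|_{L^2}\|g\|_{L^2}\|h\|_{L^2}
\end{equation*}
for some $c(\varepsilon)>0$. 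Let $L_{\max}=\max(L_1,L_2,L)$. When $L_{\max}\ll N_1^{\alpha}N$, Lemma~\ref{lemma:LW} applies after relabeling the convolution variables so that the small frequency $N$ plays the role of $N_2$ in that lemma's statement (the proof is symmetric in the three surfaces), contributing the prefactor $N_1^{-3\alpha/4+1/2}N^{-1/2}$. When $L_{\max}\gtrsim N_1^{\alpha}N$ and $N\gtrsim 1$, two applications of \eqref{eq:StriEmbed} yield the prefactor $N_1^{1/2-\alpha/4}(L_1L_2)^{1/2}$, and one exploits the large $L_{\max}$ in the $L^{-1/2+}$-summation for an extra gain of $(N_1^{\alpha}N)^{-1/2+}$. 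When $N\lesssim 1$, the non-resonant trilinear estimate \eqref{eq:GuoNonReso} is used in direct analogy with the corresponding subcase of Lemma~\ref{lemma:fKPHL}.

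\emph{Main obstacle.} The delicate point is verifying that in each subcase the combined exponent of $N_1$, after incorporating the time-subdivision loss $(N_1/N)^{(5-2\alpha)+\varepsilon}$, the prefactor $N$ from $\partial_x$, and an $L^{\varepsilon'}$-loss from the $L$-summation (bounded via $L\lesssim N_1^{\alpha+1}$), stays \emph{strictly negative}. The worst configuration is the Loomis--Whitney case at $N\sim 1$, where the total $N_1$-exponent equals $-\tfrac{11\alpha}{4}+\tfrac{11}{2}+O(\varepsilon)$. This is negative precisely because $\alpha>2$, and it forces $\varepsilon=\varepsilon(\alpha)$ to be chosen strictly smaller than a positive multiple of $(\alpha-2)$. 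This calibration between the time-localization exponent and the dispersive gain is the sole source of the hypothesis $\alpha>2$ in the quasilinear theorem.
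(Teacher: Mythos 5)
Your strategy is the same as the paper's: re-subdivide the output time window into $\sim(N_1/N)^{(5-2\alpha)+\varepsilon}$ intervals of length $N_1^{-(5-2\alpha)-\varepsilon}$, accept this as a multiplicative loss, dualize, and split according to whether $L_{\max}$ is below or above $N_1^{\alpha}N$, using \eqref{eq:LW} (with the small output frequency in the role of the lemma's smallest frequency, which is legitimate by the symmetry of the trilinear convolution form) in the resonant regime, and Strichartz-type bounds together with \eqref{eq:GuoNonReso} otherwise. The bookkeeping for the $N=1$ output block (further decomposition into $N\in 2^{\Z}$, retaining a positive power of $\min(N,1)$ for summability, and using $N_+=\max(N,1)$ in the time-subdivision count) is only implicit in your write-up, but your estimates do produce such positive powers, so this is presentational.

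There is, however, one subcase where your argument as stated does not close: $L_{\max}\gtrsim N_1^{\alpha}N$ with $N\gtrsim 1$ and $L_{\max}=L_1$ or $L_2$, i.e.\ when the large modulation sits on an \emph{input} while the output modulation $L$ may be as small as $N^{(5-2\alpha)+\varepsilon}$. Your gain $(N_1^{\alpha}N)^{-\frac12+}$ is extracted from the $L^{-\frac12}$-summation, which only works when $L=L_{\max}$; without it, two applications of \eqref{eq:StriEmbed} to the inputs give the prefactor $N_1^{\frac12-\frac{\alpha}{4}}(L_1L_2)^{\frac12}$, and after multiplying by the loss $N\,(N_1/N)^{(5-2\alpha)+\varepsilon}$ the $N_1$-exponent is $\tfrac{11}{2}-\tfrac{9\alpha}{4}+\varepsilon$, which is positive for $\alpha$ near $2$ (and converting the positive $N$-power via $N\lesssim N_1$ yields $\tfrac32-\tfrac{\alpha}{4}>0$), so the estimate fails. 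The paper instead applies \eqref{eq:StriEmbed} to $f_{N_1,L_1}$ and to the dual factor $h_{N,L}$, puts $g_{N_2,L_2}$ in $L^2$, and converts $L_2\geqslant N_1^{\alpha}N$ into the gain $(N_1^{\alpha}N)^{-\frac12}$; this mixed subcase, not the Loomis--Whitney case at $N\sim 1$, is where the binding requirement $\varepsilon<\tfrac{21\alpha}{8}-\tfrac{21}{4}$ arises (your claimed worst exponent $\tfrac{11}{2}-\tfrac{11\alpha}{4}$ is the milder of the two, though both are of size $\alpha-2$, so your qualitative conclusion about the calibration $\varepsilon\lesssim\alpha-2$ stands once this subcase is repaired).
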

\begin{proof}
	Let $\gamma: \R \rightarrow [0,1]$ be a smooth function supported in $[-1,1]$ such that 
	\begin{equation*}
		\sum_{n \in \Z} \gamma^2(t-n)\equiv 1, \quad t\in \R.
	\end{equation*}
We need to further localize the nonlinearity to intervals of size $N_1^{(2\alpha-5)-}$. Moreover, we carry out an additional decomposition in case $N=1$ into very low frequencies. By abuse of notation, let now $N \in 2^{\Z}$ and $N_+ = \max(N, 1)$. Using the definition of the $\mathcal{N}_N$ norm, the left-hand side of \eqref{eq:ToProveHH} is dominated by
\begin{equation}
	\begin{split}
 &\quad \sup_{t_N \in \R}\big\|(\tau-\omega_{\alpha}(\xi,\eta)+iN_+^{(5-2\alpha)+\varepsilon})^{-1}N \mathbf{1}_{\tilde{A}_N}(\xi,\eta) \\
 &\sum_{|m|\lesssim (\frac{N_1}{N_+})^{(5-2\alpha)+\varepsilon}} \mathcal{F}[u_{N_1}\cdot \eta_0(N_{+}^{(5-2\alpha)+\varepsilon}(t-t_N))\gamma(N_1^{(5-2\alpha)+\varepsilon} (t-t_N)-m)]\\
	 &\ast \mathcal{F}[v_{N_2}\cdot \eta_0(N_{+}^{(5-2\alpha)+\varepsilon}(t-t_N))\gamma(N_1^{(5-2\alpha)+\varepsilon} (t-t_N)-m)]\big\|_{X_N}.
	\end{split} 
\end{equation}
Hence, it suffices to prove that if $f_{N_1,L_1}, g_{N_2,L_2}: \R \times \R^2\rightarrow \R_{+}$ are functions supported in $D_{N_1,L_1}$ and $D_{N_2,L_2}$, respectively, then
\begin{equation}\label{eq:HHSimplified}
\begin{split}
	&\quad N \Big(\frac{N_1}{N_{+}}\Big)^{(5-2\alpha)+\varepsilon} \sum_{L\geqslant N_+^{(5-2\alpha)+\varepsilon}} L^{-\frac{1}{2}} \|\mathbf{1}_{D_{N,L}}(f_{N_1,L_1} \ast g_{N_2,L_2})\|_{L^2} \\
	 &\lesssim N_1^{-c_1(\varepsilon)} [\min(N, 1)]^{c_2(\varepsilon)} L_1^{\frac{1}{2}}\|f_{N_1,L_1}\|_{L^2}~L_2^{\frac{1}{2}}\|g_{N_2,L_2}\|_{L^2}.
	 \end{split}
\end{equation}
By duality, it suffices to show:
\begin{equation}\label{eq:DualEqHH}
\begin{split}
 	N \Big( \frac{N_1}{N_+} \Big)^{(5-2\alpha)+\varepsilon} \Big | \int (f_{N_1,L_1}\ast g_{N_2,L_2})\cdot h_{N,L}\Big| &\lesssim N_1^{-c_1(\varepsilon)} [\min(N, 1)]^{c_2(\varepsilon)}~(L_1L_2L^{1-})^{\frac{1}{2}} \\
	&\quad \|f_{N_1,L_1}\|_{L^2}\|g_{N_2,L_2}\|_{L^2}\|h_{N,L}\|_{L^2},
	\end{split}
\end{equation}
where $h_{N,L}$ is supported in $\tilde{D}_{N,L}$.
For $L_{\max}=\max(L,L_1,L_2)$, we again consider two cases:

\noindent $\bullet$ \underline{$L_{\max} \leqslant N_1^{\alpha}N$}:  We consider two subcases:

$\star~ N \lesssim N_1^{3-\frac{3\alpha}{2}}$: Note that since $\alpha>2$, we have $N\ll 1$. We assume $L_{\max} = L_2$ (if $L_{\max}=L$, we obtain the same estimate without using the dual term). Using the bilinear Strichartz estimate \eqref{eq:BilinearToProve}, we find $\varepsilon_2$, $\varepsilon_3 > 0$ for any $\varepsilon>0$ such that
\begin{equation*}
\begin{split}
 &~\quad \Big| \int (f_{N_1,L_1}\ast g_{N_2,L_2})\cdot h_{N,L}\Big| \\
 &\lesssim \frac{N^{\frac{1}{2}}}{N_1^{\frac{\alpha}{4}}}(LL_1)^{\frac{1}{2}}\|f_{N_1,L_1}\|_{L^2} \|h_{N,L}\|_{L^2} \|g_{N_2,L_2}\|_{L^2}\\
 &\lesssim N^{\frac{1}{2}} N_1^{\frac{3\alpha}{4}-\frac{5}{2}-\varepsilon_2} (L_1L_2L^{1-\varepsilon_3})^{\frac{1}{2}} \|f_{N_1,L_1}\|_{L^2}\|g_{N_2,L_2}\|_{L^2}\|h_{N,L}\|_{L^2}.
 \end{split}
\end{equation*}
This gives
\begin{equation*}
\begin{split}
&\quad N N_1^{(5-2\alpha)+\varepsilon} \Big | \int (f_{N_1,L_1}\ast g_{N_2,L_2})\cdot h_{N,L}\Big| \\
 &\lesssim 
N^{\frac{3}{2}} N_1^{\frac{5}{2}-\frac{5 \alpha}{4} - \varepsilon_2 + \varepsilon} (L_1L_2L^{1-\varepsilon_3})^{\frac{1}{2}} \|f_{N_1,L_1}\|_{L^2}\|g_{N_2,L_2}\|_{L^2}\|h_{N,L}\|_{L^2},
\end{split}
\end{equation*}
which is sufficient since $\alpha > 2$, which leads to a negative power of $N_1$ and summation over $N$ is possible because $N \ll 1$.

$\star~ N \gtrsim N_1^{3-\frac{3\alpha}{2}}$: For $N \lesssim 1$, we use \eqref{eq:LW} to bound \eqref{eq:DualEqHH} to obtain
\begin{equation*}
\begin{split}
&~\quad N N_1^{(5-2\alpha+\varepsilon)} N_1^{-\frac{3\alpha}{4}+\frac{1}{2}} N^{-\frac{1}{2}} (L_1 L_2 L)^{\frac{1}{2}} \|f_{N_1,L_1}\|_{L^2} \|g_{N_2,L_2}\|_{L^2} \|h_{N,L}\|_{L_2}\\
&\lesssim N^{\frac{1}{2}} N_1^{\frac{11}{2}-\frac{11 \alpha}{4} + \varepsilon} (L_1 L_2 L)^{\frac{1}{2}} \|f_{N_1,L_1}\|_{L^2} \|g_{N_2,L_2}\|_{L^2} \|h_{N,L}\|_{L^2}\\ 
&\lesssim N^{\frac{1}{2}} N_1^{\frac{11}{2}-\frac{11 \alpha}{4} + \varepsilon + (\alpha + 1)\varepsilon_3} (L_1 L_2 L^{1-\varepsilon_3})^{\frac{1}{2}} \|f_{N_1,L_1}\|_{L^2} \|g_{N_2,L_2}\|_{L^2} \|h_{N,L}\|_{L^2}.
\end{split}
\end{equation*}
This is acceptable if we choose $0<\varepsilon< \frac{11 \alpha}{4} - \frac{11}{2}$ and $\varepsilon_3 = \varepsilon_3(\varepsilon)$.

If $N\gtrsim 1$, we use \eqref{eq:LW} to bound the left-hand side of \eqref{eq:DualEqHH} by
\begin{equation*}
\begin{split}
&~\quad N \big( \frac{N_1}{N} \big)^{(5-2\alpha)+\varepsilon} N_1^{-\frac{3 \alpha}{4} + \frac{1}{2}} N^{-\frac{1}{2}} (L_1L_2L)^{\frac{1}{2}}\|f_{N_1,L_1}\|_{L^2} \|g_{N_2,L_2}\|_{L^2} \|h_{N,L}\|_{L_2}\\
&\lesssim N^{(2\alpha - 4 - \varepsilon)} N_1^{\frac{11}{2}-\frac{11 \alpha}{4} + \varepsilon} (L_1 L_2 L)^{\frac{1}{2}} \|f_{N_1,L_1}\|_{L^2} \|g_{N_2,L_2}\|_{L^2} \|h_{N,L}\|_{L_2} \\
&\lesssim N_1^{\frac{3}{2}-\frac{3\alpha}{4}} (L_1 L_2 L^{1-\varepsilon_3})^{\frac{1}{2}} \|f_{N_1,L_1}\|_{L^2} \|g_{N_2,L_2}\|_{L^2} \|h_{N,L}\|_{L_2} \\
&\lesssim N_1^{\frac{3}{2}-\frac{3\alpha}{4} + (\alpha+1) \varepsilon_3} (L_1 L_2 L^{1-\varepsilon_3})^{\frac{1}{2}} \|f_{N_1,L_1}\|_{L^2} \|g_{N_2,L_2}\|_{L^2} \|h_{N,L}\|_{L_2}.
	\end{split}
\end{equation*}
The above is sufficient for $\alpha>2$ by choosing $\varepsilon_3$ small enough.



\noindent $\bullet$ \underline{$L_{\max} \geqslant N_1^{\alpha}N$}: First, we consider the case $N\gtrsim 1$.\\
If $L \geqslant N_1^\alpha N$, we can apply two linear Strichartz estimates to find for the left-hand side of \eqref{eq:HHSimplified}:
\begin{equation*}
	\begin{split}
		&\lesssim N_1^{(5-2\alpha)+\varepsilon} N^{(-4+2\alpha)-\varepsilon}(N_1^\alpha N)^{-\frac{1}{2}} \|\mathcal{F}^{-1}(f_{N_1,L_1})\|_{L^4} \|\mathcal{F}^{-1}( g_{N_2,L_2})\|_{L^4}\\
		&\lesssim N_1^{(5-2\alpha)+\varepsilon}N^{(-4+2\alpha)-\varepsilon}N^{-\frac{1}{2}} N_1^{-\frac{\alpha}{2}} N_1^{\frac{2-\alpha}{4}} L_1^{\frac{1}{2}} \|f_{N_1,L_1}\|_{L^2}~L_2^{\frac{1}{2}}\|g_{N_2,L_2}\|_{L^2}\\
		&\lesssim N^{(-\frac{9}{2}+2\alpha)-\varepsilon}N_1^{(\frac{11}{2}-\frac{11\alpha}{4})+\varepsilon} (L_1L_2)^{\frac{1}{2}} \|f_{N_1,L_1}\|_{L^2}~\|g_{N_2,L_2}\|_{L^2}.
			\end{split}
\end{equation*}
For $-\frac{9}{2}+2 \alpha \leqslant 0$, we have to choose $0<\varepsilon<\frac{11 \alpha}{4}-\frac{11}{2}$ to find
\begin{equation*}
\lesssim N^{-\varepsilon} N_1^{-c(\varepsilon)} (L_1 L_2)^{\frac{1}{2}} \| f_{N_1,L_1} \|_{2} \|g_{N_2,L_2} \|_2,
\end{equation*}
which is sufficient. For $\alpha > \frac{9}{4}$, we find
\begin{equation*}
\begin{split}
&\lesssim N^{-\varepsilon} N_1^{(1-\frac{3 \alpha}{4})+\varepsilon}(L_1 L_2)^{\frac{1}{2}} \| f_{N_1,L_1} \|_{2} \|g_{N_2,L_2} \|_2 \\
&\lesssim N^{-\varepsilon} N_1^{-\frac{11}{16}+\varepsilon} (L_1 L_2)^{\frac{1}{2}} \| f_{N_1,L_1} \|_{2} \|g_{N_2,L_2} \|_2,
\end{split}
\end{equation*}
which is acceptable for $\varepsilon < \frac{11}{16}$.

For $L \leqslant N_1^\alpha N$, we suppose that $L_{\max}=L_2$ (note the symmetry between $L_{\max} = L_1$ and this case) and bound the left-hand side of \eqref{eq:DualEqHH} by
\begin{equation*}
	\begin{split}
		&~ \quad N \big( \frac{N_1}{N} \big)^{(5-2\alpha)+\varepsilon} \big| \int (f_{N_1,L_1} \ast g_{N_2,L_2}) h_{N,L} \big| \\
		&\lesssim N^{2 \alpha -4 - \varepsilon} N_1^{5-2\alpha +\varepsilon} \|\mathcal{F}^{-1}(f_{N_1,L_1})\|_{L^4} \|\mathcal{F}^{-1}(h_{N,L})\|_{L^4}\|g_{N_2,L_2}\|_{L^2}\\
		&\lesssim N^{2\alpha - 4 -\varepsilon} N^{\frac{2-\alpha}{8}} N_1^{5-2\alpha+\varepsilon} N_1^{\frac{2-\alpha}{8}} N_1^{-\frac{\alpha}{2}} N^{-\frac{1}{2}} (L_1 L_2 L)^{\frac{1}{2}} \| f_{N_1,L_1} \|_2 \| g_{N_2,L_2} \|_2 \| h_{N,L} \|_2 \\
		&\lesssim N^{\frac{15 \alpha}{8}-\frac{17}{4}-\varepsilon} N_1^{\frac{21}{4}-\frac{21 \alpha}{8} + \varepsilon} (L_1 L_2 L)^{\frac{1}{2}} \| f_{N_1,L_1} \|_2 \| g_{N_2,L_2} \|_2 \| h_{N,L} \|_2 
		\end{split}
\end{equation*}
For $\frac{15 \alpha}{8}-\frac{17}{4} \leqslant 0$, this gives
\begin{equation*}
\begin{split}
&\lesssim N_1^{\frac{21}{4}-\frac{21 \alpha}{8} + \varepsilon} (L_1 L_2 L)^{\frac{1}{2}} \| f_{N_1,L_1} \|_2 \| g_{N_2,L_2} \|_2 \| h_{N,L} \|_2 \\
&\lesssim N_1^{\frac{21}{4}-\frac{21 \alpha}{8} + \varepsilon + (\alpha +1)\varepsilon_3} (L_1 L_2 L^{1-\varepsilon_3})^{\frac{1}{2}} \| f_{N_1,L_1} \|_2 \| g_{N_2,L_2} \|_2 \| h_{N,L} \|_2,
\end{split}
\end{equation*}
which is acceptable if we choose $0<\varepsilon<\frac{21 \alpha}{8} - \frac{21}{4}$ and $\varepsilon_3 = \varepsilon_3(\varepsilon)$.

If $\frac{15 \alpha}{8}-\frac{17}{4}>0$, which means $\alpha > \frac{34}{15}$, we find
\begin{equation*}
\begin{split}
&\lesssim N^{-\varepsilon} N_1^{1-\frac{6 \alpha}{8} + \varepsilon} (L_1 L_2 L)^{\frac{1}{2}} \| f_{N_1,L_1} \|_2 \| g_{N_2,L_2} \|_2 \| h_{N,L} \|_2 \\
&\lesssim N^{-\varepsilon} N_1^{-\frac{7}{10}+\varepsilon} (L_1 L_2 L)^{\frac{1}{2}} \| f_{N_1,L_1} \|_2 \| g_{N_2,L_2} \|_2 \| h_{N,L} \|_2 \\
&\lesssim N^{-\varepsilon} N_1^{-\frac{7}{10}+\varepsilon + (\alpha+1)\varepsilon_3} (L_1 L_2 L^{1-\varepsilon_3})^{\frac{1}{2}} \| f_{N_1,L_1} \|_2 \| g_{N_2,L_2} \|_2 \| h_{N,L} \|_2.
\end{split}
\end{equation*}
This is acceptable for $\varepsilon < 7/10$ and choosing $\varepsilon_3 = \varepsilon_3(\varepsilon)$ small enough.
\vspace*{0.3cm}

In case $N\lesssim 1$, we use the estimate \eqref{eq:GuoNonReso}:
\begin{equation*}
	\begin{split}
\Big | \int (f_{N_1,L_1}\ast g_{N_2,L_2}\cdot h_{N,L})\Big| &\lesssim N_1^{-\frac{\alpha}{2}} N^{\frac{1}{4}} L_{\max}^{-\frac{1}{4}}  L_1^{\frac{1}{2}}\|f_{N_1,L_1}\|_{L^2}~ L_2^{\frac{1}{2}}\|g_{N_2,L_2}\|_{L^2}~L^{\frac{1}{2}}\|h_{N,L}\|_{L^2}\\ &\lesssim N_1^{-\frac{3\alpha}{4}+} N^{0+} L_1^{\frac{1}{2}}\|f_{N_1,L_1}\|_{L^2}~ L_2^{\frac{1}{2}}\|g_{N_2,L_2}\|_{L^2}~L^{\frac{1}{2}-}\|h_{N,L}\|_{L^2},
		\end{split}
	\end{equation*}
which is sufficient to prove the required estimate.
\end{proof}

The case of three comparable frequencies is treated in the following lemma:
\begin{lemma}\label{lemma:fKPHHH}
	Let $\varepsilon > 0$ and the time localization given by $T=T(N)=N^{-(5-2\alpha)-\varepsilon}$. Let $N_1,N_2,N \in 2^{\N_0}$ be such that $N_1\sim N_2 \sim N \gg 1$. Let $u_{N_1}\in F_{N_1}$, $v_{N_2}\in F_{N_2}$. Then, we have, for any $\delta > 0$
	\begin{equation}
	\|P_{N}(\partial_x(u_{N_1}v_{N_2}))\|_{\mathcal{N}_N} \lesssim N_1^{(1-\frac{3\alpha}{4})+\delta} \|u_{N_1}\|_{F_{N_1}} \|v_{N_2}\|_{F_{N_2}}.
	\end{equation}
\begin{proof}
	Using the same reductions as in the previous lemmata, we require to show
	\begin{equation}\label{eq:HHH}
		N\sum_{L\geqslant N^{(5-2\alpha)+}} L^{-\frac{1}{2}}\|\mathbf{1}_{D_{N,L}}(f_{N_1,L_1}\ast g_{N_2,L_2})\|_{L^2}\lesssim N_1^{(1-\frac{3\alpha}{4})+} L_1^{\frac{1}{2}}\|f_{N_1,L_1}\|_{L^2}~L_2^{\frac{1}{2}}\|g_{N_2,L_2}\|_{L^2}.
	\end{equation}
For $L_{\max}$ as before, we consider:\\
$\bullet$ \underline{$L_{\max} \leqslant N_1^{\alpha+1}$}: We invoke duality and use \eqref{eq:LW}:
\begin{equation*}
\begin{split}
\Big | \int (f_{N_1,L_1}\ast g_{N_2,L_2})\cdot h_{N,L}\Big| &\lesssim N_1^{-\frac{3\alpha}{4}
} (L_1L_2L )^{\frac{1}{2}} \|f_{N_1,L_1}\|_{L^2} \|g_{N_2,L_2}\|_{L^2}\|h_{N,L}\|_{L^2}. \\ 
	\end{split}
\end{equation*}
$\bullet$ \underline{$L_{\max} \geqslant N_1^{\alpha+1}$}: For $L \geqslant N_1^{\alpha+1}$, using the $L^4$ Strichartz estimate and the size of $L$, we have
\begin{equation*}
\begin{split}
	\text{LHS of }\eqref{eq:HHH} &\lesssim N N^{-\frac{\alpha+1}{2}} N_1^{\frac{2-\alpha}{8}} N_2^{\frac{2-\alpha}{8}} L_1^{\frac{1}{2}}\|f_{N_1,L_1}\|_{L^2}~L_2^{\frac{1}{2}}\|g_{N_2,L_2}\|_{L^2} \\
	 &\lesssim N_1^{1-\frac{3\alpha}{4}} L_1^{\frac{1}{2}}\|f_{N_1,L_1}\|_{L^2}~L_2^{\frac{1}{2}}\|g_{N_2,L_2}\|_{L^2}.
	 \end{split}
\end{equation*}
For $N_1^{(5-2\alpha)+\varepsilon} \leqslant L \leqslant  N_1^{\alpha+1}$, we find the above estimate up to $N_1^{\delta}$ by two $L^4$ Strichartz estimates involving the dual function and a logarithmic summation loss.
\end{proof}
\end{lemma}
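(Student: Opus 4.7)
The plan is to mimic the reductions used for the $HL$ and $HH$ cases in Lemmas \ref{lemma:fKPHL} and \ref{lemma:fKPHH}. After unpacking the $\mathcal{N}_N$ and $F_N$ norms and invoking the smoothing properties \eqref{prop} together with the time-multiplier bound \eqref{eq:TimeMult}, the estimate would reduce to the dyadic convolution bound: for $f_{N_1,L_1}, g_{N_2,L_2}$ supported in $D_{N_1,L_1}, D_{N_2,L_2}$ with $L_1, L_2 \geqslant N^{(5-2\alpha)+\varepsilon}$,
\[
N \sum_{L \geqslant N^{(5-2\alpha)+\varepsilon}} L^{-\frac{1}{2}} \|\mathbf{1}_{D_{N,L}} (f_{N_1,L_1} \ast g_{N_2,L_2})\|_{L^2} \lesssim N_1^{(1-\frac{3\alpha}{4})+\delta} L_1^{\frac{1}{2}} L_2^{\frac{1}{2}} \|f_{N_1,L_1}\|_{L^2} \|g_{N_2,L_2}\|_{L^2}.
\]
The convolution constraint together with the resonance function analysis from Section \ref{section:Resonance} tells us that $|\Omega_\alpha^1| \lesssim N_1^{\alpha+1}$ when all three spatial frequencies are comparable, so the natural threshold for the modulation variables is $N_1^{\alpha+1}$.

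Next, I would split into two regimes for $L_{\max} := \max(L,L_1,L_2)$. In the resonant regime $L_{\max} \leqslant N_1^{\alpha+1}$, I would apply the trilinear Loomis--Whitney bound \eqref{eq:LW}, noting that the gain $N_1^{-\frac{3\alpha}{4}+\frac{1}{2}} N_{\min}^{-\frac{1}{2}}$ collapses to $N_1^{-\frac{3\alpha}{4}}$ since $N_{\min} \sim N_1$. Coupling this with the derivative factor $N$ and summing $L^{-1/2}L^{1/2}$ over the dyadic range $L \in [N^{(5-2\alpha)+\varepsilon}, N_1^{\alpha+1}]$ yields $N_1^{1-\frac{3\alpha}{4}}$ up to a logarithmic loss harmlessly absorbed into $N_1^\delta$. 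In the non-resonant regime $L_{\max} \geqslant N_1^{\alpha+1}$, one of the three modulations is large, and I would apply the $L^4$ Strichartz bound \eqref{eq:StriEmbed} to two of the three factors by duality, each contributing $N_1^{(2-\alpha)/8}$. When $L_1$ or $L_2$ equals $L_{\max}$, the explicit $L_i^{1/2}$ on the right-hand side absorbs the large modulation cleanly; when $L = L_{\max}$, the factor $L^{-1/2} \lesssim N_1^{-(\alpha+1)/2}$ balances against $N \cdot N_1^{(2-\alpha)/4}$ to produce the same exponent $N_1^{1-\frac{3\alpha}{4}}$.

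The main obstacle, and the structural reason why $HHH$ is the least favorable of the three interaction cases, is the absence of any transversality-gain: unlike the $HH$ analysis in Lemma \ref{lemma:fKPHH} where $N_{\min} \ll N_{\max}$ furnished extra decay and allowed one to tighten $\varepsilon$, here $N_{\min} \sim N_1$ and the Loomis--Whitney bound saturates at $N_1^{-3\alpha/4}$, giving precisely the claimed exponent $1 - \frac{3\alpha}{4}$ after multiplication by the derivative $N$. Beyond this conceptual point the argument should be bookkeeping; the $\delta$-loss is innocuous, arising only from the logarithmic sum in $L$ in the intermediate range where neither the Loomis--Whitney nor the bilinear Strichartz bound strictly dominates.
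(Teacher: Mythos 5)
Your proposal is correct and follows essentially the same route as the paper: the same reduction to the dyadic bound \eqref{eq:HHH}, the split of $L_{\max}$ at the resonance threshold $N_1^{\alpha+1}$, the nonlinear Loomis--Whitney estimate \eqref{eq:LW} in the low-modulation regime (where $N_{\min}\sim N_1$ collapses the gain to $N_1^{-3\alpha/4}$), and two $L^4$ Strichartz estimates via \eqref{eq:StriEmbed} in the high-modulation regime, with the $L^{-1/2}\lesssim N_1^{-(\alpha+1)/2}$ balance when $L$ is large and the spare $L_1^{1/2}$ or $L_2^{1/2}$ (plus a logarithmic $L$-summation absorbed into $N_1^{\delta}$) otherwise. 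The exponent bookkeeping matches the paper's proof, so no gap to report.
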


Finally, we consider the very low frequency case:
\begin{lemma}\label{lemma:fKPVLow}
	Let $\varepsilon > 0$ and the time localization given by $T=T(N)=N^{-(5-2\alpha)-\varepsilon}$.
	Let $N_1,N_2,N \in 2^{\N_0}$ be such that $N_1,N_2,N \lesssim 1$. Let $u_{N_1}\in F_{N_1}$ and $v_{N_2}\in F_{N_2}$. Then, we have
	\begin{equation}
		\|P_{N}(\partial_x(u_{N_1}v_{N_2}))\|_{\mathcal{N}_N} \lesssim \|u_{N_1}\|_{F_{N_1}} \|v_{N_2}\|_{F_{N_2}}.
	\end{equation}
\begin{proof}
	This estimate is a direct application of \eqref{eq:StriEmbed}. Using the definitions of the function spaces, it is sufficient to prove that for $L_1,L_2 \geqslant 1$ and $f_{N_1,L_1},g_{N_2,L_2}: \R\times \R^2 \rightarrow \R_{+}$, supported in $\tilde{D}_{N_1,L_1}, \tilde{D}_{N_2,L_2}$, respectively, we have
	\begin{equation}\label{eq:ToProveVeryLow}
		N \sum_{L\geqslant 1} L^{-\frac{1}{2}} \|\mathbf{1}_{\tilde{D}_{N,L}} (f_{N_1,L_1}\ast g_{N_2,L_2})\|_{L^2} \lesssim L_1^{\frac{1}{2}}\|f_{N_1,L_1}\|_{L^2} ~L_2^{\frac{1}{2}} \|g_{N_2,L_2}\|_{L^2}.
	\end{equation}
Using \eqref{eq:StriEmbed}, we have
	\begin{equation*}
\text{LHS of }\eqref{eq:ToProveVeryLow} \lesssim N L_1^{\frac{1}{2}}\|f_{N_1,L_1}\|_{L^2}~L_2^{\frac{1}{2}}\|g_{N_2,L_2}\|_{L^2},
	\end{equation*}
which is sufficient.
\end{proof}
	\end{lemma}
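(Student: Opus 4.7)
The plan is to unfold the $\mathcal{N}_N$-norm and reduce the claim to the dyadic inequality~\eqref{eq:ToProveVeryLow}, which I will establish via two applications of the $L^4$-Strichartz estimate~\eqref{eq:StriEmbed}. This should be the easiest case in Section~\ref{section:Quasilinear}: since all three frequencies are bounded by a constant, the factor $[\max(1,N_i)]^{\frac{1}{4}-\frac{\alpha}{8}}$ in~\eqref{eq:StriEmbed} is of order $1$, the derivative coming from $\partial_x$ is absorbed by $N\lesssim 1$, and neither the bilinear Strichartz estimate nor the trilinear Loomis--Whitney bound from Section~\ref{section:Resonance} is needed.

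I first use the multiplier property~\eqref{prop} together with the time-multiplier bound~\eqref{eq:TimeMult} to reduce to proving~\eqref{eq:ToProveVeryLow} for nonnegative $L^2$-functions $f_{N_1,L_1}$, $g_{N_2,L_2}$ supported in $\tilde D_{N_1,L_1}$, $\tilde D_{N_2,L_2}$ with arbitrary $L_1,L_2\geqslant 1$. Since $N\lesssim 1$, the regularization parameter $N^{(5-2\alpha)+\varepsilon}$ is of size $\sim 1$, so on each modulation shell $\tilde D_{N,L}$ the multiplier $(\tau-\omega_\alpha+i)^{-1}$ contributes a factor $\sim L^{-1}$; combined with the $L^{1/2}$ weight in the definition of $X_N$ this yields exactly the $L^{-1/2}$ sitting in the sum on the left-hand side of~\eqref{eq:ToProveVeryLow}. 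For each fixed $L$, Plancherel and H\"older's inequality give
\[
\bigl\|\mathbf{1}_{\tilde D_{N,L}}(f_{N_1,L_1}\ast g_{N_2,L_2})\bigr\|_{L^2} \leqslant \bigl\|\mathcal{F}^{-1}(f_{N_1,L_1})\bigr\|_{L^4_{t,x,y}}\bigl\|\mathcal{F}^{-1}(g_{N_2,L_2})\bigr\|_{L^4_{t,x,y}},
\]
and two applications of~\eqref{eq:StriEmbed} produce $\lesssim L_1^{1/2}L_2^{1/2}\|f_{N_1,L_1}\|_{L^2}\|g_{N_2,L_2}\|_{L^2}$, uniformly in $L$.

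All that remains is to sum over dyadic $L\in 2^{\N_0}$; this is a convergent geometric series $\sum_L L^{-1/2}\lesssim 1$, and together with the prefactor $N\lesssim 1$ this delivers~\eqref{eq:ToProveVeryLow}. I do not anticipate any real obstacle here: at bounded frequency the $L^4$-Strichartz estimate already loses no powers of $N_i$, so the derivative from $\partial_x$ is neutralized by the smallness of $N$, and the dyadic modulation sum is automatically finite. The more delicate high-frequency interactions treated in Lemmata~\ref{lemma:fKPHL}--\ref{lemma:fKPHHH} do not arise in this regime, precisely because the unfavorable resonant behavior only occurs when at least one of the input frequencies is large.
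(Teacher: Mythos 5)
Your argument is correct and follows essentially the same route as the paper: reduce via the definitions of $F_N$, $\mathcal{N}_N$ (and \eqref{prop}, \eqref{eq:TimeMult}) to the dyadic bound \eqref{eq:ToProveVeryLow}, estimate $\|\mathbf{1}_{\tilde D_{N,L}}(f_{N_1,L_1}\ast g_{N_2,L_2})\|_{L^2}$ by H\"older and two applications of \eqref{eq:StriEmbed} (with the frequency factors $\sim 1$ since $N_1,N_2\lesssim 1$), and sum the convergent series $\sum_L L^{-1/2}$, absorbing $\partial_x$ into $N\lesssim 1$. The only difference is that you spell out the reductions the paper leaves implicit; no gap.
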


\begin{proof}[Proof of Proposition \ref{prop:ShortTimeBilinear}]
	Given $\alpha \in (2,\frac{5}{2}]$, we choose $\varepsilon = \varepsilon(\alpha)$ such that the estimate from Lemma \ref{lemma:fKPHH} is valid. Note that the $High \times High \rightarrow Low$ interaction is the only interaction, which imposes a constraint on time localization.
	We decompose the nonlinearity $\partial_x (uv)$ as follows:
	\begin{equation*}
	\begin{split}
		\partial_x(uv) &= \Big(\sum_{N_1\ll N_2\sim N} + \sum_{N_2\ll N_1 \sim N} + \sum_{N\ll  N_1\sim N_2} + \sum_{N_1\sim N_2\sim N \gg 1} +\sum_{N,N_1,N_2\lesssim 1}\Big) \\ 
		&\quad \times P_N\partial_x(P_{N_1}u \cdot P_{N_2}v).
		\end{split}
	\end{equation*}
Of the first two summands above, it is sufficient to consider the first by making the assumption that the derivative hits the high frequency. Each of the terms can be then separately handled by Lemma \ref{lemma:fKPHL}, Lemma \ref{lemma:fKPHH}, Lemma \ref{lemma:fKPHHH}, and Lemma \ref{lemma:fKPVLow}, respectively. We multiply each of the estimates in the lemmata by $N^{2s'}$ and sum up dyadically over the spatial frequencies to obtain the required estimates.
	
	\end{proof}

\subsection{Energy estimates}
We prove the energy estimates for the solution and the difference of the solutions in this section. The former is crucial to conclude an a priori estimate for the solution while the latter is required to prove the continuity of the data-to-solution map.

To begin, we assume that $T\in (0,1]$, $N_1,N_2,N_3 \in 2^{\Z}$ with $\max(N_i) \geqslant 1$, $u_i, \in F_{N_i}(T)$, $i=1,2,3$. Without any loss of generality, we assume that $N_1\leqslant N_2 \leqslant N_3$.  Let $\gamma: \R\rightarrow [0,1]$ denote a smooth function supported in $[-1,1]$ with the property that 
\begin{equation*}
	\sum_{n\in \Z} \gamma^3(t-n) = 1, \quad t\in \R.
\end{equation*}
We fix extensions $\tilde{u}_i$ of $u_i$ such that $\|\tilde{u}_i\|_{F_{N_i}} \leqslant 2\|u_i\|_{F_{N_i}(T)}$. Then, we use the function $\gamma$ to divide the time interval to sub-intervals of size $ N_3^{2\alpha-5-\varepsilon}$:
\begin{equation} \label{eq:NotationsEnergyEstimate1}
	\begin{split}
	&~\quad \Big|	\int_{[0,T]\times \R^2} u_1 u_2 u_3 ~dxdy dt \Big|\\
	& \lesssim \sum_{|n|\leqslant CN_3^{(5-2\alpha)+\varepsilon} }\Big| \int_{\R\times \R^2} (\gamma(N_3^{(5-2\alpha)+\varepsilon}t-n)\mathbf{1}_{[0,T]}(t)\tilde{u}_1) (\gamma(N_3^{(5-2\alpha)+\varepsilon}t-n)\mathbf{1}_{[0,T]}(t)\tilde{u}_2)\\
	&\hspace{4cm}\times (\gamma(N_3^{(5-2\alpha)+\varepsilon}t-n)\mathbf{1}_{[0,T]}(t)\tilde{u}_3)~dx dydt\Big|\\
	&=\sum_{|n|\leqslant CN_3^{(5-2\alpha)+\varepsilon} }\Big|\int_{\R\times \R^2} \big( \mathcal{F}((\gamma(N_3^{(5-2\alpha)+\varepsilon}t-n)\mathbf{1}_{[0,T]}(t)\tilde{u}_1))\\
	&\qquad \ast \mathcal{F}((\gamma(\ldots )\mathbf{1}_{[0,T]}(t)\tilde{u}_2) \big)(\xi,\eta,\tau) \mathcal{F}((\gamma(\ldots)\mathbf{1}_{[0,T]}(t)\tilde{u}_3))(\xi,\eta,\tau)~d\xi d\eta d\tau\Big|\\
	&=\sum_{|n|\leqslant CN_3^{(5-2\alpha)+\varepsilon}} \Big |\int_{\R\times\R^2} (f_1 \ast f_2 )\cdot f_3~ d\xi d\eta d\tau \Big|,
	\end{split}
\end{equation}
with
\begin{equation}
\label{eq:NotationsEnergyEstimate}
	f_i:= \mathcal{F}((\gamma(N_3^{(5-2\alpha)+\varepsilon}t-n)\mathbf{1}_{[0,T]}(t)\tilde{u_i})), \quad i=1,2,3.
\end{equation}
In the above summation over $n \in \Z$, we consider the two sets:
\begin{equation*}
	\begin{split}
	A &= \{n \in \Z, |n|\leqslant CN_3^{(5-2\alpha)+\varepsilon} : \gamma(N_3^{(5-2\alpha)+\varepsilon}t-n)\mathbf{1}_{[0,T]}(t) =\gamma(N_3^{(5-2\alpha)+\varepsilon}t-n)  \},\\
	 \quad A^c &= \{n \in \Z , |n|\leqslant CN_3^{(5-2\alpha)+\varepsilon}: 0 \in \text{supp} ( \gamma(N_3^{(5-2\alpha)+\varepsilon}\cdot-n)) \\
	 &\quad \vee T \in \text{supp} ( \gamma(N_3^{(5-2\alpha)+\varepsilon}\cdot -n)) \}.
\end{split}
\end{equation*}
Since $T \in (0,1]$ and $\gamma$ is supported in $[-1,1]$, we have that $|A| \lesssim N_3^{(5-2\alpha)+\varepsilon}$ while $|A^c| \leqslant 4$. On the physical side, the temporal support of $f_i$, $i=1,2,3$ is of size $\sim N_3^{(2\alpha-5)-\varepsilon}$. We can further decompose
\begin{equation}\label{eq:ModulationDecomposition}
	f_i = \sum_{L_i\geqslant N_3^{(5-2\alpha)+\varepsilon} }f_{i,L_i}.
\end{equation}
with\footnote{In \eqref{eq:ModulationDecomposition} we abuse notation: $L_i$ is a dyadic number, when we write $L_i = N_3^{\kappa}$ this refers to the largest dyadic number smaller than $N_3^\kappa$, as defined in \eqref{eq:ModulationDecompositionII}.}
\begin{equation}
\label{eq:ModulationDecompositionII}
\text{supp}(f_{i,L}) \subseteq
\begin{cases}
&\tilde{D}_{N_i, \leqslant L}, \; L= \max \{ \tilde{L} \in 2^{\N_0} : \tilde{L} \leqslant N_3^{5-2 \alpha + \varepsilon} \}, \\
&\tilde{D}_{N_i,L}, \; \text{else}.
\end{cases}
\end{equation}

In the following computations, we shall assume that we have already made the above reduction.  For $n \in A^c$, we use the following estimate to substitute for \eqref{prop} (cf. \cite[p.~291]{IKT}),
\begin{equation*}
 \sup_{L\geqslant 1}   L^{\frac{1}{2}} \|\eta_L(\tau-\omega_{\alpha}(\xi,\eta)) \cdot  f_N^{I}\|_{L^2} \lesssim \|f_N\|_{X_N},
\end{equation*}
where $f_N^{I} = \mathcal{F}(\mathbf{1}_I(t)f_{N}\cdot \mathcal{F}^{-1}(f_N))$ for an interval $I \subset \R$ (in our case $I$ is an interval of length $\min(1,N_{\max}^{(2\alpha-5)-\varepsilon})$. Since in the estimates below, we can spare a small power of $L_{\max}$ and gain a factor $N_3^{0+}$ compared to $N_3^{(5-2\alpha)+\varepsilon} = |A|$, we can also handle the contribution of $A^c$. We shall focus on $n \in A$ in the following.




\subsubsection{Energy estimate for the solution}

In this section we shall prove energy estimates
	\begin{equation}
	\label{eq:EnergyEstimateSolution}
		\|u\|_{E^{s',0}(T)}^2 \lesssim \|u_0\|_{H^{s',0}}^2 + \|u\|_{F^{s,0}(T)} \|u\|_{F^{s',0}(T)}^2.
	\end{equation}
	for solutions to \eqref{eq:fKPI} for some $s' \geqslant s \geqslant 0$ with $s = s(\alpha)$. If $\alpha$ is large enough, we can reach $s'=0$. Also, the time localization will depend on $\alpha$.

\begin{proposition}\label{prop:EnergyEstSol}
		Let $2<\alpha\leqslant \frac{5}{2}$ and  $T\in(0,1]$. Let $0<\varepsilon<\frac{21 \alpha}{8} - \frac{21}{4}$.
		\begin{itemize}
		\item $2<\alpha\leqslant \frac{24}{11}$: Then, for the time localization $T(N) = N^{-(5-2\alpha)-\varepsilon}$, $s' \geqslant s > 6 - \frac{11 \alpha}{4} + \varepsilon$, and $u \in F^{s',0}(T)$, the estimate \eqref{eq:EnergyEstimateSolution} holds for smooth solutions $u$ to \eqref{eq:fKPI}.
		\item $\frac{24}{11} < \alpha \leqslant \frac{5}{2}$: Additionally, we suppose $\varepsilon < \frac{11 \alpha}{4}-6$. Then, for the time localization $T(N)=N^{-(5-2\alpha)-\varepsilon}$, $s' \geqslant s \geqslant 0$, the estimate \eqref{eq:EnergyEstimateSolution} holds for smooth solutions $u$ to \eqref{eq:fKPI}.
		\end{itemize}

\begin{proof}
	We consider equation \eqref{eq:fKPI} on $(-T,T)\times \R^2$ for Littlewood-Paley pieces $P_Nu$. Multiplying this equation with $P_{N}u$ and integrating, we obtain
	\begin{equation}\label{eq:LPEnergyEstimate}
	\begin{split}
		\sup_{t_N\in [-T,T]}\|P_N u(t_N)\|_{L^2}^2 &\leqslant \|P_N u_0\|_{L^2}^2 \\
		&\qquad + \sup_{t_N\in [-T,T]} \Big| \int_{[0,t_N]\times \R^2}P_N u P_N(u\partial_x u)~dt dx dy\Big|.
	\end{split}
	\end{equation}
After proving suitable bounds for the last term, \eqref{eq:EnergyEstimateSolution} follows from multiplying \eqref{eq:LPEnergyEstimate} with $N^{2s'}$ and summation in $N$.

 We consider the integrand:
\begin{equation*}
	P_N u P_N(P_{N_1}u\cdot P_{N_2}\partial_xu).
\end{equation*}
Using the notation from \eqref{eq:NotationsEnergyEstimate1} and \eqref{eq:NotationsEnergyEstimate}, we define
\begin{equation}\label{eq:DefinitionOffi}
  \begin{split}
 f_1 &= \mathcal{F}(\gamma(N_{\max}^{(5-2\alpha)+\varepsilon}t-n)\mathbf{1}_{[0,T]}(t) P_{N_1} u),\\
  \tilde{f}_2 &= \mathcal{F}(\gamma(N_{\max}^{(5-2\alpha)+\varepsilon}t-n)\mathbf{1}_{[0,T]}(t) P_{N_2}\partial_x u), \\
 f_3 &= \mathcal{F}(\gamma(N_{\max}^{(5-2\alpha)+\varepsilon}t-n)\mathbf{1}_{[0,T]}(t) P_N u),
 \end{split}  
\end{equation}
and consider following cases: 
\begin{enumerate}[(i)]
	\item $N\ll N_1 \sim N_2$,
	\item $N_2 \ll N_1 \sim N$,
	\item $N_1 \ll N_2 \sim N$,
	\item $N\sim N_1 \sim N_2$.
\end{enumerate}
 As is the case with the bilinear estimates, at first, we have $N \in 2^{\mathbb{N}_0}$ to take into account the definition of the function spaces. For $N=1$, we carry out an additional dyadic decomposition into very low frequencies $N \in 2^{\Z}$ to take advantage of the derivative, which is smoothing for $N \ll 1$. Note that in the estimates proved below, we always have summability for $N \lesssim 1$.\\
 
 In case (i), the resultant frequency $N$ is low. We divide the time interval of integration into sub-intervals of size $\max(N_1,N_2,N)^{(2\alpha-5)-\varepsilon} \sim N_1^{(2\alpha-5)-\varepsilon}$ via the technique elucidated above. With $N \lesssim N_1$ and $L_{\max}=\max(L_1,L_2,L)$, we consider the following:\\
$\bullet$ \underline{$L_{\max}  \leqslant N_1^{\alpha}N$}: Using notation from \eqref{eq:NotationsEnergyEstimate} and \eqref{eq:DefinitionOffi}, we have
\begin{equation}\label{eq:CaseOne}
	\begin{split}
		&~\quad \sum_{n \in A} \Big | \int_{\R\times \R^2} (f_1 \ast \tilde{f}_2)\cdot f_3~ d\xi d\tau d\eta \Big|\\
		&\lesssim N N_1^{5-2\alpha+\varepsilon} \sup_{n \in A} \sum_{N_1^{5-2\alpha +\varepsilon} \leqslant L_i \leqslant N_1^\alpha N} \Big | \int_{\R\times \R^2} (f_{1,L_1} \ast f_{2,L_2})\cdot f_{3,L_3} d\xi d\eta d\tau \Big|,
	\end{split}
\end{equation}
where
\begin{equation*}
	f_2 = \mathcal{F}(\gamma(N_1^{(5-2\alpha)+\varepsilon}t-n)\mathbf{1}_{[0,T]}(t) P_{N_2} u)
\end{equation*}
and we use the notation from \eqref{eq:ModulationDecomposition} for the decomposition in modulation of $f_i$. Using the nonlinear Loomis--Whitney inequality \eqref{eq:LW}, we obtain that \eqref{eq:CaseOne} can be bounded by
\begin{equation}\label{eq:DecisiveEstimateSolution}
	\begin{split}
		&~\quad N N_1^{5-2\alpha+ \varepsilon}N_1^{-\frac{3\alpha}{4}+\frac{1}{2}}N^{-\frac{1}{2}} \prod_{i=1}^3 \sum_{L_i \geqslant N_1^{(5-2\alpha)+}} L_i^{\frac{1}{2}} \| f_{i,L_i} \|_{L^2} \\
		&\lesssim N^{\frac{1}{2}} N_1^{\frac{11}{2}-\frac{11\alpha}{4}+ \varepsilon} \|u_{N_1}\|_{F_{N_1}(T)}\|u_{N_2}\|_{F_{N_2}(T)}\|u_N\|_{F_N(T)}.
	\end{split}
\end{equation}
 Note that for $\varepsilon < \frac{21 \alpha}{8} - \frac{21}{4}$ the exponent of $N_1$ is negative. For $N \lesssim 1$, we have easy summation in $N$ for any $s' \geqslant s \geqslant 0$. For $\frac{24}{11} < \alpha \leq \frac{5}{2}$ and $\varepsilon$ according to the assumptions, we have
 \begin{equation*}
 \lesssim N^{6-\frac{11 \alpha}{4} + \varepsilon}\|u_{N_1}\|_{F_{N_1}(T)}\|u_{N_2}\|_{F_{N_2}(T)}\|u_N\|_{F_N(T)}.
\end{equation*}  
Since $6-\frac{11 \alpha}{4} + \varepsilon < 0$, we have easy summation for $s' \geqslant s \geqslant 0$ and $N \gtrsim 1$. For $2 < \alpha < \frac{24}{11}$, we estimate with easy summation:
 \begin{equation*}
 \lesssim N_1^{6-\frac{11 \alpha}{4} + \varepsilon}\|u_{N_1}\|_{F_{N_1}(T)}\|u_{N_2}\|_{F_{N_2}(T)}\|u_N\|_{F_N(T)}.
\end{equation*}

\begin{remark}
We note the bilinear Strichartz estimate \eqref{eq:BilinearToProve} gives the same result if $N \lesssim N_1^{3-\frac{3\alpha}{2}}$.
\end{remark}
$\bullet$ \underline{$L_{\max} \geqslant N_1^{\alpha}N$}: We assume that $L_{\max}= L_2$ (since the estimate below becomes better if $L_{\max} = L$). Using the same notation as in the previous case and the linear Strichartz estimate \eqref{eq:StriEmbed}, we obtain
\begin{equation*}
\begin{split}
		&~\quad \Big | \int_{[0,t_N] \times \R^2} P_N u P_N(P_{N_1}u \cdot P_{N_2}\partial_x u) \Big|\\
	&\lesssim  NN_1^{5-2\alpha +\varepsilon} \sum_{\substack{{L_1,L \geqslant N_1^{(5-2\alpha)+\varepsilon}}\\ {L_2 \geqslant N_1^\alpha N}}}  \Big| \int_{\R \times \R^2}  (f_{1,L_1} * f_{2,L_2}) \cdot f_{3,L_3}  ~d\tau d\xi d\eta \Big|\\
	&\lesssim N N_1^{5-2\alpha+\varepsilon} \sum_{\substack{{L_1,L \geqslant N_1^{(5-2\alpha)+\varepsilon}}\\ {L_2 \geqslant N_1^\alpha N}}} \| \mathcal{F}^{-1} (f_{1,L_1})\|_{L^4} \|\mathcal{F}^{-1}(f_{3,L_3})\|_{L^4} \|f_{2,L_2}\|_{L^2}\\
	&\lesssim N N_1^{5-2\alpha+\varepsilon} N^{\frac{2-\alpha}{8}} N_1^{\frac{2-\alpha}{8}} N_1^{-\frac{\alpha}{2}} N^{-\frac{1}{2}} \prod_{i=1}^3 \sum_{L_i \geqslant N_1^{(5-2\alpha)+\varepsilon}} L_i^{\frac{1}{2}} \| f_{i,L_i} \|_{L^2}\\
	&\lesssim N^{\frac{3}{4}-\frac{\alpha}{8}} N_1^{\frac{21}{4}-\frac{21\alpha}{8}+\varepsilon}\|u_N\|_{F_N(T)}\|u_{N_1}\|_{F_{N_1}(T)}\|u_{N_2}\|_{F_{N_2}(T)}.
\end{split}	
\end{equation*}
 Note that for $\varepsilon < \frac{21 \alpha}{8} - \frac{21}{4}$ the exponent of $N_1$ is negative. For $N \lesssim 1$, we have easy summation in $N$ for any $s' \geqslant s \geqslant 0$. In the following let $N \gtrsim 1$. For $\frac{24}{11} < \alpha < 4$ and $\varepsilon$ according to the assumptions, we have
 \begin{equation*}
 \lesssim N^{6-\frac{11 \alpha}{4} + \varepsilon}\|u_{N_1}\|_{F_{N_1}(T)}\|u_{N_2}\|_{F_{N_2}(T)}\|u_N\|_{F_N(T)}.
\end{equation*}  
Since $6-\frac{11 \alpha}{4} + \varepsilon < 0$, we have easy summation for $s' \geqslant s \geqslant 0$ and $N \gtrsim 1$. For $2 < \alpha < \frac{24}{11}$, we estimate
 \begin{equation*}
 \lesssim N_1^{6-\frac{11 \alpha}{4} + \varepsilon}\|u_{N_1}\|_{F_{N_1}(T)}\|u_{N_2}\|_{F_{N_2}(T)}\|u_N\|_{F_N(T)}
\end{equation*}
with easy summation.

Case (ii) can be handled in a similar way as case (i) as the derivative hits the low frequency. For case (iii), we use a commutator argument, see \cite[Lemma ~6.1]{IKT} and \cite[Remark~5.9]{KimSchippa2021} to transfer the derivative to the low frequency. We can then use the same argument as in case (i) to obtain the required estimate. Case (iv) can be handled similarly.
	\end{proof}
\end{proposition}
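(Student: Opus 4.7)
The plan is to derive the energy estimate for each Littlewood--Paley piece $P_N u$ and then sum dyadically with weights $N^{2s'}$. Starting from \eqref{eq:fKPI}, I would pair the equation with $P_N u$ and integrate on $[0,t_N]\times\R^2$; the linear terms vanish by antisymmetry (after an integration by parts absorbing $\partial_x^{-1}\partial_y^2$), leaving
\[
\sup_{t_N\in[-T,T]}\|P_N u(t_N)\|_{L^2}^2 \leqslant \|P_N u_0\|_{L^2}^2 + \sup_{t_N}\Big|\int_{[0,t_N]\times\R^2} P_N u\, P_N(u\partial_x u)\,dt\,dx\,dy\Big|.
\]
Thus the whole proposition reduces to a trilinear space-time integral estimate on dyadically localized factors $P_{N_1}u,\,P_{N_2}\partial_x u,\,P_N u$.

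Next I would Littlewood--Paley decompose the input factors and split into the familiar four cases: (i) $N\ll N_1\sim N_2$, (ii) $N_2\ll N_1\sim N$, (iii) $N_1\ll N_2\sim N$, (iv) $N\sim N_1\sim N_2$, matching the bilinear analysis already carried out in Section~5. To align with the $F_{N_i}$ spaces, I would partition the time interval into sub-intervals of length $N_{\max}^{(2\alpha-5)-\varepsilon}$ via a smooth partition of unity in $t$, turning each localized piece into a convolution $(f_1*f_2)\cdot f_3$ of functions with support in $\tilde D_{N_i,L_i}$ (after further decomposition in modulation $L_i\gtrsim N_{\max}^{(5-2\alpha)+\varepsilon}$). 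There are roughly $N_{\max}^{(5-2\alpha)+\varepsilon}$ such sub-intervals, and this factor will appear in the final bound.

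On each sub-interval I would split according to the size of $L_{\max}=\max(L_1,L_2,L)$ relative to the resonance threshold. When $L_{\max}\lesssim N_{\max}^\alpha N_{\min}$ we are in the resonant regime, and I would invoke the nonlinear Loomis--Whitney estimate \eqref{eq:LW} to extract a factor $N_{\max}^{-3\alpha/4+1/2}N_{\min}^{-1/2}\prod L_i^{1/2}$. When $L_{\max}\gtrsim N_{\max}^\alpha N_{\min}$ I would use two $L^4$-Strichartz embeddings \eqref{eq:StriEmbed}, which combined with the gain from $L_{\max}^{-1/2}$ produces the same kind of overall bound. After combining both subcases and counting the number of sub-intervals, the resulting $N_{\max}$-power turns out to be $N_{\max}^{6-11\alpha/4+\varepsilon}$, which is what dictates the two regimes in the statement: for $\alpha>24/11$ (and $\varepsilon<11\alpha/4-6$) the exponent is negative in $N$, allowing $s'\geqslant 0$; for smaller $\alpha$ one must absorb the surplus into $N_1^{2s}$, which forces $s>6-11\alpha/4+\varepsilon$.

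The hardest step is case~(iii), $N_1\ll N_2\sim N$, where the derivative $\partial_x$ falls on the \emph{high} frequency and there is no obvious way to move it onto $P_{N_1}u$. The standard way around this, also used in \cite{IKT}, is a commutator argument: rewrite $P_N(P_{N_1}u\cdot P_{N_2}\partial_x u)$ as $P_{N_1}u\cdot P_N(P_{N_2}\partial_x u)+[P_N,P_{N_1}u]P_{N_2}\partial_x u$, integrate the first summand by parts against $P_N u$ (so that $\partial_x$ lands on the low frequency or cancels), and control the commutator by exploiting the smoothness of the projection symbol to trade one derivative for $N_1/N$, thereby reducing case~(iii) to an estimate of the same shape as case~(i). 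Case~(iv) is then handled by an analogous but simpler trilinear Loomis--Whitney/Strichartz dichotomy with all three frequencies comparable. Finally, multiplying the dyadic bounds by $N^{2s'}$ and summing in $N,N_1,N_2$ — using the negative $N_{\max}$-power obtained above to guarantee convergence — yields \eqref{eq:EnergyEstimateSolution}.
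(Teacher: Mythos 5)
Your proposal is correct and follows essentially the same route as the paper: the $L^2$ energy identity for $P_Nu$, time localization into $N_{\max}^{(2\alpha-5)-\varepsilon}$ sub-intervals, the resonant/non-resonant split at $L_{\max}\sim N_{\max}^{\alpha}N_{\min}$ with the nonlinear Loomis--Whitney estimate \eqref{eq:LW} in the resonant case and $L^4$ Strichartz plus the $L_{\max}^{-1/2}$ modulation gain otherwise, yielding the exponent $6-\tfrac{11\alpha}{4}+\varepsilon$ that produces the two regimes, and the IKT-type commutator/integration-by-parts argument for the $N_1\ll N_2\sim N$ case. This matches the paper's proof in all essential steps.
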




\subsubsection{Energy estimate for the difference equation}
Let $u_1, u_2$ solve \eqref{eq:fKPI} with initial data $\phi_1$ and $\phi_2$, respectively. The difference of the solutions, $v=u_1-u_2$ satisfies the following:
\begin{equation}\label{eq:DiffEq}
	\left\{ \begin{array}{cl}
		\partial_t v - D^{\alpha}_x\partial_{x} v - \partial_{x}^{-1} \partial_y^2 v &= \partial_x(v(u_1+u_2))/2, \quad (t,x,y) \in \R\times \R \times \R, \\ 
		v(0) &= \phi_1-\phi_2=:\phi.
	\end{array} \right.
\end{equation}
\begin{proposition}\label{prop:fKPEnergyEstDiff}
Let $2<\alpha \leqslant \frac{5}{2}$ , $T \in (0,1]$ and $u_1,u_2$ be solutions to \eqref{eq:fKPI} with initial data $\phi_1$ and $\phi_2$, respectively. Then, for $0<\varepsilon<\frac{21 \alpha}{8} - \frac{21}{4}$, $\varepsilon '> \varepsilon$, the time localization $ T(N) = N^{-(5-2\alpha)-\varepsilon}$, $s \geqslant 5-2\alpha+\varepsilon'$ and $v=u_1-u_2$ a solution to \eqref{eq:DiffEq}, the following estimates hold:
\begin{align}
	\label{eq:DifferenceSolutionEstimateI}
		\|v\|_{E^{0,0}(T)}^2 &\lesssim \|\phi\|_{L^2}^2 + \|v\|_{F^{0,0}(T)}^2 (\|u_1\|_{F^{s,0}(T)} + \|u_2\|_{F^{s,0}(T)}),\\
	\label{eq:DifferenceSolutionEstimateII}
	\|v\|_{E^{s,0}(T)}^2 &\lesssim \|\phi\|_{H^{s,0}}^2 + \|v\|_{F^{s,0}(T)}^3 \\
	&\qquad + (\|v\|_{F^{s,0}(T)}^2 \|u_2\|_{F^{s,0}(T)} + \|v\|_{F^{0,0}(T)}\|v\|_{F^{s,0}(T)}\|u_2\|_{F^{2s,0}(T)}). \nonumber 
\end{align} 
\begin{proof}
We use the fundamental theorem of calculus to obtain from \eqref{eq:DiffEq}:
	\begin{equation}\label{eq:LPDifferenceEnergyEstimate}
		\begin{split}
			\sup_{t_N\in [-T,T]}\|P_{N}v(t_N)\|_{L^2}^2 &\lesssim \|P_N \phi\|_{L^2}^2 \\ &\qquad + \sup_{t_N\in [-T,T]}\Big| \int_{[0,t_N]\times \R^2} P_Nv P_N(\partial_x(v(u_1+u_2)))~ dxdy dt\Big|.
		\end{split}
	\end{equation}
		We are required to handle the last term in the above display. For the proof of \eqref{eq:DifferenceSolutionEstimateI}, we treat the term $P_N v P_N(\partial_x(vu_1))$ since the second term, namely $P_N v P_N(\partial_x(vu_2))$ can be estimated similarly. We have
		\begin{equation}\label{eq:Term}
			\begin{split}
				P_N (vu_1) &= P_N (P_{\ll N} v \cdot u_1) + P_N(P_{\gtrsim N} v \cdot u_1)\\
				&\sim P_N(P_{\ll N}v \cdot P_N u_1) + P_N(P_{\gtrsim N}v \cdot P_{\gtrsim N}u_1).
			\end{split}
		\end{equation}
		Corresponding to the integrand in the last term of \eqref{eq:LPDifferenceEnergyEstimate}, we need to consider
		\begin{equation}\label{eq:Terms1}
			P_N v \cdot \partial_x P_N(P_{\ll N}v \cdot P_N u_1) = P_N v \cdot \partial_x  P_{\ll N}v \cdot P_N u_1 + P_N v \cdot P_{\ll N} v \cdot \partial_x P_N u_1
		\end{equation}
		and 
		\begin{equation}\label{eq:TermEasy}
			P_N v \cdot \partial_x P_N(P_{\gtrsim N}v \cdot P_{\gtrsim N} u_1 ) = \sum_{N_2 \sim N_1 \gtrsim N} P_N v \cdot \partial_x P_N (P_{N_1}v\cdot P_{N_2}u_1).
	\end{equation}

The first term on the right-hand side of \eqref{eq:Terms1} and \eqref{eq:TermEasy} can be estimated like in Proposition \ref{prop:EnergyEstSol} because the derivative hits the low frequency term. However, for the second term, the derivative hits the high frequency term and the resulting term is not amenable to an integration by parts argument to transfer the derivative to the low frequency term. We treat \eqref{eq:TermEasy} as follows: Fix extensions of $P_N v$, $P_{N_1}v$ and $P_{N_2}u_1$ and still denote them by $P_N v$, $P_{N_1}v$ and $P_{N_2}u_1$ to lighten the notation. Then, using Parseval's identity and using the reductions explained before, we have
\begin{align*}
	&~\qquad \Big |\int_{[0,t_N]\times \R^2} P_N v \cdot \partial_x P_N (P_{N_1}v \cdot P_{N_2} u_1)~dxdy dt \Big| \\
	&\lesssim  N\Big | \int_{\R \times \R^2} \widehat{P_{N}v} \cdot (\widehat{P_{N_1}v} \ast \widehat{P_{N_1}u_1}) ~d\xi d\eta d\tau \Big|\\
	& \lesssim N\sum_{|n|\leqslant CN_1^{(5-2\alpha)+\varepsilon}} \Big | \int_{\R\times \R^2} (f_1 \ast f_2)\cdot f_3 ~d\xi d\eta d\tau \Big|,
\end{align*}
where, now,
\begin{align*}
	f_1 &= \mathcal{F}(\gamma(N_1^{(5-2\alpha)+\varepsilon}t-n)\mathbf{1}_{[0,T]}(t) P_{N_1} v), \\
	f_2 &= \mathcal{F}(\gamma(N_1^{(5-2\alpha)+\varepsilon}t-n)\mathbf{1}_{[0,T]}(t) P_{N_2} u_1),\\
	f_3 &= \mathcal{F}(\gamma(N_1^{(5-2\alpha)+\varepsilon}t-n)\mathbf{1}_{[0,T]}(t)  P_N v).
\end{align*}
After summing up in $n$, we need to control the following term:
\begin{equation}\label{eq:TermToControl}
	N N_1^{(5-2\alpha)+\varepsilon} \Big | \int_{\R\times \R^2} (f_1 \ast f_2)\cdot f_3 ~d\xi d\eta d\tau  \Big|, \text{ where } N \lesssim N_1 \sim N_2.
\end{equation}
For the decomposition in modulation for functions $f_i$, $i=1,2,3$, we use the notation \eqref{eq:ModulationDecomposition} and consider the following cases:
\begin{itemize}
	\item $\underline{L_{\max} \leqslant NN_1^{\alpha}}: $ We further consider two subcases depending on the size of the high and low $x$ frequencies:\\
	$\star~N^{\frac{1}{2}}\leqslant N_1^{\frac{3}{2}-\frac{3\alpha}{4}}$: After decomposing the functions in modulation, an application of the bilinear Strichartz estimate \eqref{eq:BilinearToProve} to a high-low interaction gives
	\begin{equation}\label{eq:DecisiveEstimate}
		\begin{split}
			\eqref{eq:TermToControl}	& \lesssim NN_1^{(5-2\alpha)+\varepsilon} \sum_{L_i \leqslant NN_1^{\alpha}} 
			\Big| \int_{\R \times \R^2} (f_{1,L_1}\ast f_{2,L_2})\cdot f_{3,L_3} d\xi d\eta d\tau \Big|\\
			&\lesssim N N_1^{(5-2\alpha)+\varepsilon}  \frac{N^{\frac{1}{2}}}{N_1^{\frac{\alpha}{4}}} N_1^{\big(\frac{2\alpha -5}{2} \big)-\frac{\varepsilon}{2}} \prod_{i=1}^3 \sum_{L_i\geqslant N_1^{ (5-2\alpha)+\varepsilon}} L_i^{\frac{1}{2}} \| f_{i,L_i} \|_{L^2}.
		\end{split}
	\end{equation}
If $N \gtrsim 1$, we find
\begin{equation*}
\lesssim N_1^{7-\frac{7 \alpha}{2} + \frac{\varepsilon}{2}} \prod_{i=1}^3 \sum_{L_i\geqslant N_1^{ (5-2\alpha)+\varepsilon}} L_i^{\frac{1}{2}} \| f_{i,L_i} \|_{L^2}.
\end{equation*}	
This suffices for $\varepsilon < 7 \alpha - 14$, which is ensured by hypothesis. For $N \lesssim 1$, we interpolate \eqref{eq:DecisiveEstimate} with the estimate in the above display, to find
\begin{equation*}
\lesssim N^{c_1} N_1^{-c_2} \prod_{i=1}^3 \sum_{L_i\geqslant N_1^{ (5-2\alpha)+\varepsilon}} L_i^{\frac{1}{2}} \| f_{i,L_i} \|_{L^2},
\end{equation*}
which allows for summation in $N \lesssim 1$.\\
	$\star~ N_1^{\frac{3}{2}-\frac{3\alpha}{4}} \leqslant N^{\frac{1}{2}}$: In this case, an application of \eqref{eq:LW} gives 
	\begin{equation*}
			\eqref{eq:TermToControl}
			\lesssim N N_1^{(5-2\alpha)+\varepsilon}  N_1^{-\frac{3\alpha}{4}+\frac{1}{2}} N^{-\frac{1}{2}} \prod_{i=1}^3 \sum_{L_i \geqslant N_1^{(5-2\alpha)+}} L_i^{\frac{1}{2}} \| f_{i,L_i} \|_{L^2}.
	\end{equation*}
	If $N \gtrsim 1$, we find
	\begin{equation*}
	\lesssim N_1^{6-\frac{11 \alpha}{4} + \varepsilon} \prod_{i=1}^3 \sum_{L_i \geqslant N_1^{(5-2\alpha)+\varepsilon}} L_i^{\frac{1}{2}} \| f_{i,L_i} \|_{L^2},
	\end{equation*}				
	which yields \eqref{eq:DifferenceSolutionEstimateI} for $s>6-\frac{11 \alpha}{4} + \varepsilon$. This suffices.
	If $N \lesssim 1$, we have straight-forward summation if $\varepsilon < \frac{11 \alpha}{4} - \frac{11}{2}$.
	\item $\underline{L_{\max}\geqslant NN_1^{\alpha}}$: For $L_{\max}=L_3$, using the $L^4$ Strichartz estimate, we obtain
	\begin{equation*}
		\begin{split}
			\eqref{eq:TermToControl} & \lesssim NN_1^{(5-2\alpha)+\varepsilon}\sum_{\substack{{L_1\geqslant NN_1^{\alpha}}\\{L_2,L_3\geqslant N_1^{(5-2\alpha)+\varepsilon} }}} \Big| \int_{\R\times \R^2} (f_{1,L_1}\ast f_{2,L_2})\cdot f_{3,L_3} d\tau d\xi d\eta \Big|\\
			&\lesssim NN_1^{(5-2\alpha)+\varepsilon} (NN_1^{\alpha})^{-\frac{1}{2}}N_1^{\frac{1}{4}-\frac{\alpha}{8}} N_2^{\frac{1}{4}-\frac{\alpha}{8}} \prod_{i=1}^3 \sum_{L_i \geqslant N_1^{(5-2\alpha)+\varepsilon}} L_i^{\frac{1}{2}}\|f_{i,L_i}\|_{L^2}\\
			&\lesssim N^{\frac{1}{2}} N_1^{\frac{11}{2}-\frac{11\alpha}{4}+\varepsilon}\|P_Nv\|_{F_N(T)} \|P_{N_2}u_1\|_{F_{N_2}(T)}\|P_{N_1}v\|_{F_{N_1}(T)}.
		\end{split}
	\end{equation*}
	This suffices for $N \lesssim 1$ because the exponent of $N_1$ is negative. If $N \gtrsim 1$, we can estimate	
	\begin{equation*}
		\lesssim N_1^{6-\frac{11\alpha}{4}+\varepsilon} \|P_Nv\|_{F_N(T)} \|P_{N_2}u_1\|_{F_{N_2}(T)}\|P_{N_1}v\|_{F_{N_1}(T)}.
	\end{equation*}
	If $L_{\max} = L_2$, we apply the $L^4$ Strichartz estimate to $f_{1,L_1}$ and $f_{3,L_3}$ and utilize the modulation gain from $f_{2,L_2}$ as follows:
	\begin{equation*}
		\begin{split}
			\eqref{eq:TermToControl} &\lesssim N N_1^{(5-2\alpha)+\varepsilon}  N^{\frac{1}{4}-\frac{\alpha}{8}} N_1^{\frac{1}{4}-\frac{\alpha}{8}} N_1^{-\frac{\alpha}{2}} N^{-\frac{1}{2}} \prod_{i=1}^3 \sum_{L_i \geqslant N_1^{(5-2\alpha)+\varepsilon}} L_i^{\frac{1}{2}} \| f_{i,L_i} \|_{L^2}\\
			&\lesssim N^{\frac{3}{4}-\frac{\alpha}{8}} N_1^{\frac{21}{4}-\frac{21\alpha}{8}+\varepsilon} \|P_Nv\|_{F_N(T)} \|P_{N_2}u_1\|_{F_{N_2}(T)}\|P_{N_1}v\|_{F_{N_1}(T)}.
		\end{split}
	\end{equation*}
	If $N \lesssim 1$, we have straight-forward summation for $2<\alpha<4$ because $\frac{3}{4} - \frac{\alpha}{8}>0$, and the exponent of $N_1$ is negative. If $N \gtrsim 1$, we obtain
	\begin{equation*}
	\lesssim N_1^{6-\frac{11 \alpha}{4} + \varepsilon} \|P_Nv\|_{F_N(T)} \|P_{N_2}u_1\|_{F_{N_2}(T)}\|P_{N_1}v\|_{F_{N_1}(T)},
	\end{equation*}
	which suffices. 	The case $L_{\max} = L_1$ can be treated similarly. 
\end{itemize}
The proof of \eqref{eq:DifferenceSolutionEstimateI} is concluded by summing up in the $x$ frequencies. Note that for these terms we cover the same regularity as in Proposition \ref{prop:EnergyEstSol}. The term which leads to worse estimates is the following:
\begin{equation*}
	P_N v \cdot P_{N_1}v \cdot \partial_x P_{N_2} u_1, \quad N_1 \ll N_2 \sim N,
\end{equation*}
which corresponds to the second term in \eqref{eq:Terms1}. 
Using the notation and reductions explained in the beginning of this section, we define
\begin{equation*}
	\begin{split}
		f_1 &= \mathcal{F}(\gamma(N^{(5-2\alpha)+\varepsilon}t-n)\mathbf{1}_{[0,T]}(t) P_{N_1} v), \\
		\tilde{f}_2 &= \mathcal{F}(\gamma(N^{(5-2\alpha)+\varepsilon}t-n)\mathbf{1}_{[0,T]}(t) \partial_x P_{N_2} u_1),\\
		f_3 &= \mathcal{F}(\gamma(N^{(5-2\alpha)+\varepsilon}t-n)\mathbf{1}_{[0,T]}(t)  P_N v),
	\end{split}
\end{equation*}
After considering the derivative in $\tilde{f}_2$, carrying out summation in $n$, we require to handle the following term:
\begin{equation*}
	NN^{(5-2\alpha)+\varepsilon}\Big | \int_{\R \times \R^2} (f_1 \ast f_2)\cdot f_3 d\tau d\xi d\eta~ \Big|,
\end{equation*} 
where 
\begin{equation*}
	f_2 = \mathcal{F}(\gamma(N^{(5-2\alpha)+\varepsilon}t-n)\mathbf{1}_{[0,T]}(t) P_{N_2} u_1).
\end{equation*}
Furthermore, decomposing $f_i$ in modulation $L_i$, $i=1,2,3$, we reduce to estimating a term of the form
\begin{equation}\label{eq:TermToEstimate}
	N N^{(5-2\alpha)+ \varepsilon} \sum_{N_1^{(5-2\alpha)+\varepsilon} \leqslant L_i \leqslant N_1^\alpha N} \Big|\int_{\R \times \R^2} (f_{1,L_1} * f_{2,L_2}) \cdot f_{3,L_3}  ~d\tau d\xi d\eta \Big|.
\end{equation}
Depending on the size of $L_{\max}$, we consider the following cases:
\begin{itemize}
	\item \underline{$L_{\max} \leqslant N^{\alpha}N_1$}: We further consider two subcases:\\
	$\star~N_1^{\frac{1}{2}}\leqslant N_2^{\frac{3}{2}-\frac{3\alpha}{4}}$: An application of the bilinear Strichartz estimate \eqref{eq:BilinearToProve} gives
	\begin{equation*}
		\eqref{eq:TermToEstimate} \lesssim N^{(5-2\alpha)+\varepsilon} N \frac{N_1^{\frac{1}{2}}}{N_2^{\frac{\alpha}{4}}} N_2^{\big(\frac{2\alpha -5}{2} \big)-\frac{\varepsilon}{2}} \prod_{i=1}^3 \sum_{L_i\geqslant N_2^{ (5-2\alpha)+\varepsilon}} L_i^{\frac{1}{2}} \| f_{i,L_i} \|_{L^2}.
	\end{equation*}
	Summation in $N_1$ gives
	\begin{equation*}
		\lesssim N^{(5-2\alpha)+\frac{\varepsilon}{2}}  \|P_{N_1}v\|_{F_{N_1}(T)}\|P_{N_2}u_1\|_{F_{N_2}(T)} \|P_{N}v\|_{F_{N_1}(T)}.
	\end{equation*}
	$\star~ N_2^{\frac{3}{2}-\frac{3\alpha}{4}} \leqslant N_1^{\frac{1}{2}}$: We use \eqref{eq:LW} to obtain
	\begin{equation*}
		\begin{split}
			\eqref{eq:TermToEstimate}
			&\lesssim N^{(5-2\alpha)+\varepsilon} N N^{-\frac{3\alpha}{4}+\frac{1}{2}} N_1^{-\frac{1}{2}} \prod_{i=1}^3 \sum_{L_i \geqslant N_2^{(5-2\alpha)+\varepsilon}} L_i^{\frac{1}{2}} \| f_{i,L_i} \|_{L^2} \\
			&\lesssim N^{(5-2\alpha)+\varepsilon} \|P_Nv\|_{F_N(T)} \|P_{N_2}u_1\|_{F_{N_2}(T)}\|P_{N_1}v\|_{F_{N_1}(T)}.
		\end{split}
	\end{equation*}
Now we handle the non-resonant case.	
\item \underline{$L_{\max} \geqslant N^{\alpha}N_1$}: We apply the estimate \eqref{eq:GuoNonReso} by assuming that $L_{\max} = L_3$. Note that in this case the small frequency $N_1$ can have size $\lesssim 1$.
\begin{equation*}
	\begin{split}
		\eqref{eq:TermToEstimate} &\lesssim N N^{(5-2\alpha)+\varepsilon} N_1^{\frac{1}{4}} N^{-\frac{\alpha}{2}} L_{\max}^{-\frac{1}{4}} \prod_{i=1}^3 \sum_{\substack{{L_i \geqslant N^{(5-2\alpha)+\varepsilon}},\\{L_{\max}\geqslant N_1N^{\alpha}}}} L_i^{\frac{1}{2}} \| f_{i,L_i} \|_{L^2} \\
		&\lesssim N^{(6-\frac{11\alpha}{4})+\varepsilon} \|P_Nv\|_{F_N(T)} \|P_{N_2}u_1\|_{F_{N_2}(T)}\|P_{N_1}v\|_{F_{N_1}(T)}. 
	\end{split}
\end{equation*}
This suffices if $N_1 \gtrsim 1$. If $N_1 \lesssim 1$, we can interpolate with the prior estimate to find 
\begin{equation*}
\lesssim N_1^\delta N^{(6-\frac{11\alpha}{4})+2\varepsilon} \|P_Nv\|_{F_N(T)} \|P_{N_2}u_1\|_{F_{N_2}(T)}\|P_{N_1}v\|_{F_{N_1}(T)}.
\end{equation*}
with straight-forward summation
The other assumptions, namely $L_{\max}=L_1$ or $L_{\max}=L_2$ lead to the same conclusion.
\end{itemize}
The proof of \eqref{eq:DifferenceSolutionEstimateI} follows by substituting the obtained estimates in \eqref{eq:LPDifferenceEnergyEstimate} and carrying out a summation in the $x$ frequencies. For \eqref{eq:DifferenceSolutionEstimateII}, we multiply the same by $N^{2s}$ and sum up. Noting that $u_1 = v+u_2$ leads to \eqref{eq:DifferenceSolutionEstimateII}.
\end{proof}
\end{proposition}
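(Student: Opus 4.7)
The plan is to mimic the proof of Proposition \ref{prop:EnergyEstSol} applied to the difference equation \eqref{eq:DiffEq}. Starting from the Littlewood-Paley $L^2$ energy identity
\begin{equation*}
\sup_{t_N\in[-T,T]} \|P_N v(t_N)\|_{L^2}^2 \lesssim \|P_N \phi\|_{L^2}^2 + \sup_{t_N} \Big| \int_{[0,t_N]\times \R^2} P_N v \cdot P_N \partial_x(v(u_1+u_2)) \, dx \, dy \, dt \Big|,
\end{equation*}
I would multiply by $N^{2s'}$, with $s' = 0$ for \eqref{eq:DifferenceSolutionEstimateI} and $s' = s$ for \eqref{eq:DifferenceSolutionEstimateII}, sum dyadically in $N$, and apply the short-time reduction used in \eqref{eq:NotationsEnergyEstimate1}: the partition of unity in time with cut-offs of length $\sim N_{\max}^{(2\alpha-5)-\varepsilon}$ reduces matters to trilinear convolution bounds on frequency- and modulation-localized pieces $f_1, f_2, f_3$ arising from $v$, $u_1+u_2$, and $v$.

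Next, I would dyadically decompose $v \cdot u_i = \sum P_{N_1}v \cdot P_{N_2} u_i$ into the four customary regimes: (i) $N \ll N_1 \sim N_2$, (ii) $N_2 \ll N_1 \sim N$, (iii) $N_1 \ll N_2 \sim N$, (iv) $N \sim N_1 \sim N_2$. In regimes (i), (ii), (iv), and in the benign half of (iii) obtained by splitting $\partial_x P_N(P_{N_1} v \cdot P_{N_2} u_i)$ into $P_N(\partial_x P_{N_1} v \cdot P_{N_2} u_i) + P_N(P_{N_1} v \cdot \partial_x P_{N_2} u_i)$ and retaining only the first summand, the derivative sits on a factor of frequency comparable to or smaller than the other two. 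In each such sub-case the trilinear estimates used in Proposition \ref{prop:EnergyEstSol}, combining the nonlinear Loomis-Whitney bound \eqref{eq:LW}, the bilinear Strichartz estimate \eqref{eq:BilinearToProve}, the non-resonant estimate \eqref{eq:GuoNonReso}, and the $L^4$ Strichartz bound, apply verbatim; after summation these produce the contribution $\|v\|_{F^{s,0}(T)}^2 (\|u_1\|_{F^{s,0}(T)} + \|u_2\|_{F^{s,0}(T)})$, and after substituting $u_1 = v + u_2$ in \eqref{eq:DifferenceSolutionEstimateII} the pure $v^3$-piece yields the summand $\|v\|_{F^{s,0}(T)}^3$.

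The essentially new difficulty is the asymmetric term in regime (iii),
\begin{equation*}
P_N v \cdot P_{N_1} v \cdot \partial_x P_{N_2} u_i, \qquad N_1 \ll N_2 \sim N,
\end{equation*}
where the derivative sits on the high-frequency factor that does \emph{not} carry a $v$. Unlike in Proposition \ref{prop:EnergyEstSol}, there is no symmetric partner permitting the derivative to be transferred to $P_{N_1} v$ by integration by parts or by a commutator argument. I would still split according to $L_{\max} \leqslant N^\alpha N_1$ versus $L_{\max} \geqslant N^\alpha N_1$, and within the first case also according to $N_1$ against $N^{3-3\alpha/2}$, invoking \eqref{eq:LW}, \eqref{eq:BilinearToProve}, and \eqref{eq:GuoNonReso} in the respective sub-cases. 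The decisive exponent on $N_2$ remains $N_2^{6-11\alpha/4+\varepsilon}$, which is negative by the hypothesis $\varepsilon < \tfrac{21\alpha}{8} - \tfrac{21}{4}$; but the undistributed factor $N \sim N_2$ produced by $\partial_x$ now has to be packaged with the $u_i$-factor rather than redistributed symmetrically. Assigning $\|v\|_{F^{0,0}(T)}$ to $P_{N_1} v$, $\|v\|_{F^{s,0}(T)}$ to $P_N v$, and sending both the surviving $N^s$ from the Littlewood-Paley weight and the $\partial_x$-factor $N$ onto the $u_i$-term forces the norm $\|u_2\|_{F^{2s,0}(T)}$; this is precisely the origin of $\|v\|_{F^{0,0}(T)} \|v\|_{F^{s,0}(T)} \|u_2\|_{F^{2s,0}(T)}$ in \eqref{eq:DifferenceSolutionEstimateII}. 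For \eqref{eq:DifferenceSolutionEstimateI} with $s' = 0$ no such asymmetric redistribution is needed and one obtains $\|v\|_{F^{0,0}(T)}^2 \|u_i\|_{F^{s,0}(T)}$ at the same threshold $s \geqslant 5 - 2\alpha + \varepsilon'$.

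The hardest part will be verifying the sharp exponent balance in regime (iii): one must check in every sub-case that the power of $N_2$ collapses to $6 - \tfrac{11\alpha}{4} + \varepsilon$ and that the residual loss in $N_1$ is bounded by $N_1^{\varepsilon' - \varepsilon}$, so that the summation in $N_1$ is absorbed by the factor $N_1^{-s}$ with $s \geqslant 5 - 2\alpha + \varepsilon'$. A secondary technicality is the very-low-frequency case $N_1 \lesssim 1$, where I would interpolate the nonlinear Loomis-Whitney bound against a cheaper Strichartz-type bound to recover a small positive power of $N_1$, exactly as in the proof of Proposition \ref{prop:EnergyEstSol}.
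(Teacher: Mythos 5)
Your proposal follows the paper's proof essentially step for step: the same Littlewood--Paley energy identity, the same short-time reduction, the same identification of the asymmetric term $P_N v\cdot P_{N_1}v\cdot \partial_x P_{N_2}u_1$ with $N_1\ll N_2\sim N$ as the only genuinely new difficulty (no commutator/symmetrization available), the same case analysis in $L_{\max}$ using \eqref{eq:LW}, \eqref{eq:BilinearToProve} and \eqref{eq:GuoNonReso}, and the same absorption of the resulting high-frequency loss into $\|u_i\|_{F^{s,0}(T)}$, respectively $\|u_2\|_{F^{2s,0}(T)}$ after writing $u_1=v+u_2$, which is exactly how the paper arrives at \eqref{eq:DifferenceSolutionEstimateI} and \eqref{eq:DifferenceSolutionEstimateII}.

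One piece of bookkeeping should be corrected, since you flag it as the point to verify. In the asymmetric regime the decisive exponent is not $6-\frac{11\alpha}{4}+\varepsilon$, and it is not negative: for $2<\alpha\leqslant\frac{24}{11}$ one has $6-\frac{11\alpha}{4}\geqslant 0$, and the hypothesis $\varepsilon<\frac{21\alpha}{8}-\frac{21}{4}$ does not force $6-\frac{11\alpha}{4}+\varepsilon<0$. Moreover, in the paper's treatment of \eqref{eq:TermToEstimate} the resonant sub-cases (bilinear Strichartz and nonlinear Loomis--Whitney) produce the loss $N^{(5-2\alpha)+\varepsilon}$ on the high frequency $N\sim N_2$, which dominates the non-resonant contribution $N^{(6-\frac{11\alpha}{4})+\varepsilon}$ because $6-\frac{11\alpha}{4}<5-2\alpha$ for $\alpha>\frac{4}{3}$. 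It is precisely this positive residual loss — the interval count that the trilinear smoothing cannot cancel when the derivative sits on $u_1$ — that forces $s\geqslant 5-2\alpha+\varepsilon'$ and the norms $\|u_i\|_{F^{s,0}(T)}$, $\|u_2\|_{F^{2s,0}(T)}$; likewise one does not send the full factor $N$ from $\partial_x$ onto the $u_1$-piece, only $N^{(5-2\alpha)+\varepsilon}$ together with the weight $N^{s}$, using $N^{s+(5-2\alpha)+\varepsilon}\lesssim N_2^{2s}N_2^{\varepsilon-\varepsilon'}$, whose small negative power also yields the dyadic summation. With this correction your outline coincides with the paper's argument.
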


\subsection{Proof of Theorem \ref{thm:QuasilinearPrecise}} We conclude the proof of Theorem \ref{thm:QuasilinearLWP} in this section. In the first step, we show a priori estimates.

\emph{A priori estimates:} Let $\alpha \in (2,\frac{5}{2}]$, $\varepsilon' > 0$, $\varepsilon = \varepsilon(\alpha,\varepsilon')$, and
\begin{equation*}
s \geqslant 
\begin{cases}
 6-\frac{11\alpha}{4} + \varepsilon', &\quad \alpha \in (2,\frac{24}{11}], \\
 0, &\quad \alpha \in (\frac{24}{11},\frac{5}{2}],
 \end{cases}
\end{equation*}
and $u_0 \in H^{\infty,0}$. We can suppose by rescaling and subcriticality that $\| u_0 \|_{H^{s,0}} \leqslant \varepsilon_0 \ll 1$ with $\varepsilon_0$ determined later. By the local well-posedness in $H^{2,0}$, we have existence of solutions in $H^{2,0}$ for $T_{\max} = T_{\max}(\| u_0 \|_{H^{2,0}})$. 

By Lemma \ref{lemma:LinShortTime}, Proposition \ref{prop:ShortTimeBilinear}, and Proposition \ref{prop:EnergyEstSol}, we have the following set of estimates for $T \leqslant \min(T_{\max},1)$ and time localization $T(N) = N^{-(5-2\alpha)-\varepsilon}$ provided that $\varepsilon$ is chosen small enough:
\begin{equation*}
	\left\{ \begin{array}{cl}
		\| u \|_{F^{s,0}(T)} &\lesssim \| u \|_{E^{s,0}(T)} + \| \partial_x (u^2) \|_{\mathcal{N}^{s,0}(T)}, \\
		\| \partial_x(u^2) \|_{\mathcal{N}^{s,0}(T)} &\lesssim \| u \|^2_{F^{s,0}(T)}, \\
		\| u \|^2_{E^{s,0}(T)} &\lesssim \| u_0 \|^2_{H^{s,0}} + \| u \|^3_{F^{s,0}(T)}.
	\end{array} \right.
\end{equation*}
This yields
\begin{equation}
    \label{eq:SolutionsFs}
    \| u \|^2_{F^{s,0}(T)} \lesssim \| u_0 \|^2_{H^{s,0}} + \| u \|^4_{F^{s,0}(T)} + \| u \|^3_{F^{s,0}(T)}.
\end{equation}

Secondly, we have (cf. \cite[Lemma~4.2,~p.~279]{IKT} )
\begin{equation*}
	\lim_{T\downarrow 0} \|u\|_{E^{s,0}(T)} \lesssim \|u_0\|_{H^{s,0}}, \quad \lim_{T\downarrow 0} \|\partial_x(u^2)\|_{\mathcal{N}^{s,0}(T)} =0.
\end{equation*}
Hence, by choosing $\varepsilon_0$ small enough, we find by \eqref{eq:SolutionsFs} and a continuity argument that
\begin{equation}\label{eq:fKPAPriori}
	\|u\|_{F^{s,0}(T)} \lesssim \|u_0\|_{H^{s,0}}
\end{equation}
for $T= \min(1,T_{\max})$. Another application of Lemma \ref{lemma:LinShortTime} and Propositions \ref{prop:ShortTimeBilinear} and \ref{prop:EnergyEstSol} yields
\begin{equation*}
	\left\{ \begin{array}{cl}
		\| u \|_{F^{2,0}(T)} &\lesssim \| u \|_{E^{2,0}(T)} + \| \partial_x (u^2) \|_{\mathcal{N}^{2,0}(T)}, \\
		\| \partial_x(u^2) \|_{\mathcal{N}^{2,0}(T)} &\lesssim \| u \|_{F^{2,0}(T)} \| u \|_{F^{s,0}(T)}, \\
		\| u \|^2_{E^{2,0}(T)} &\lesssim \| u_0 \|^2_{H^{2,0}} + \| u \|^2_{F^{2,0}(T)} \| u \|_{F^{s,0}(T)}.
	\end{array} \right.
\end{equation*}
This set of estimates yields
\begin{equation*}
    \| u \|_{F^{2,0}(T)}^2 \lesssim \| u_0 \|_{H^{2,0}}^2 + \| u \|_{F^{s,0}(T)} \| u \|_{F^{2,0}(T)}^2 +  \| u \|^2_{F^{2,0}(T)} \| u \|_{F^{s,0}(T)}^2,
\end{equation*}
and therefore, for $\| u \|_{F^{s,0}(T)} \lesssim \varepsilon_0$ we have $\| u \|_{F^{2,0}(T)} \lesssim \| u_0 \|_{H^{2,0}}$. Consequently, we have existence up to $T=1$ choosing $\varepsilon_0$ sufficiently small only depending on $\| u_0 \|_{H^{s,0}} \leq \varepsilon_0$.

Since $s=0$ for $\alpha>\frac{24}{11}$, by the above a priori estimates and the conservation of mass  \eqref{eq:MassConservation}, we can show \emph{global existence} of solutions.

\begin{theorem}[Global~existence~for~smooth~solutions]
Let $\alpha \in ( \frac{24}{11}, \frac{5}{2}]$, and $u_0 \in H^{\infty,0}(\R^2)$. For any $s \geqslant 0$ we have a solution $u \in C(\R;H^{s,0})$ to \eqref{eq:fKPI}, and there exist $C_1(\| u_0 \|_{L^2}),C_2(\| u_0 \|_{L^2}) >0$ such that
\begin{equation}
\label{eq:APriori}
\| u(t) \|_{H^{s,0}} \leq C_1 e^{C_2 |t|} \| u_0 \|_{H^{s,0}}.
\end{equation}

\end{theorem}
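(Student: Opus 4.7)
The plan is to iterate the small-data local existence just established on unit time intervals, using mass conservation and the scaling symmetry to reduce to small data. Since $\alpha>4/3$, the scaling \eqref{eq:ScalingfKPInitialData} gives $\|\phi_\lambda\|_{L^2}=\lambda^{-3\alpha/4+1}\|\phi\|_{L^2}$, so picking $\lambda=\lambda(\|u_0\|_{L^2})\geqslant 1$ large enough makes $\|u_{0,\lambda}\|_{L^2}\leqslant\varepsilon_0$, where $\varepsilon_0$ is the smallness threshold from the preceding a priori estimates. Because $\alpha>\frac{24}{11}$, this threshold is required only in $L^2$.

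First I would apply the local existence on $[0,1]$ to the rescaled smooth data. Combining the $s'=s$ versions of Lemma \ref{lemma:LinShortTime}, Proposition \ref{prop:ShortTimeBilinear}, and Proposition \ref{prop:EnergyEstSol} gives
\begin{equation*}
\|u_\lambda\|_{F^{s,0}(1)}^2 \lesssim \|u_{0,\lambda}\|_{H^{s,0}}^2 + \|u_\lambda\|_{F^{0,0}(1)}\,\|u_\lambda\|_{F^{s,0}(1)}^2
\end{equation*}
for every $s\geqslant 0$. Since $\|u_\lambda\|_{F^{0,0}(1)}\lesssim\|u_{0,\lambda}\|_{L^2}\leqslant\varepsilon_0$, the second term can be absorbed into the left-hand side, yielding $\|u_\lambda\|_{F^{s,0}(1)}\leqslant C_s\|u_{0,\lambda}\|_{H^{s,0}}$ with $C_s=C_s(\|u_{0,\lambda}\|_{L^2})$. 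Lemma \ref{lemma:Embed} then bounds $\sup_{t\in[0,1]}\|u_\lambda(t)\|_{H^{s,0}}$ by the same constant.

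Next, since mass conservation \eqref{eq:MassConservation} forces $\|u_\lambda(1)\|_{L^2}=\|u_{0,\lambda}\|_{L^2}\leqslant\varepsilon_0$, I restart the local existence argument from $u_\lambda(1)$ on $[1,2]$ and iterate on each successive unit interval (and symmetrically backward in time). This produces a global smooth solution $u_\lambda\in C(\R;H^{s,0})$ for every $s\geqslant 0$, with the telescoping bound $\|u_\lambda(n)\|_{H^{s,0}}\leqslant C_s^{|n|}\|u_{0,\lambda}\|_{H^{s,0}}$ for $n\in\Z$. Interpolating between integer times and undoing the scaling, which rescales time by $\lambda^{\alpha+1}$ and the $H^{s,0}$ norm by an explicit $\lambda$-dependent factor, converts this into the claimed bound \eqref{eq:APriori}, with $C_1,C_2$ depending only on $\|u_0\|_{L^2}$ through $\lambda$.

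The main thing to verify, and the only substantive point beyond routine iteration, is the linear-in-top-norm estimate $\|u_\lambda\|_{F^{s,0}(1)}\lesssim\|u_{0,\lambda}\|_{H^{s,0}}$ for every $s\geqslant 0$ with constant depending only on $\|u_{0,\lambda}\|_{L^2}$. The preceding a priori arguments spell this out explicitly for $s=2$ via a high-low type bilinear bound, but the same three-estimate combination works unchanged for arbitrary $s\geqslant 0$, since Propositions \ref{prop:ShortTimeBilinear} and \ref{prop:EnergyEstSol} are already stated with a parameter $s'\geqslant s$ where taking $s=0$ is admissible in the present range $\alpha\in(\frac{24}{11},\frac{5}{2}]$.
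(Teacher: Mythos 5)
Your proposal is correct and follows essentially the same route as the paper's proof: rescale to data small in $L^2$ (which suffices since $s=0$ is admissible for $\alpha>\frac{24}{11}$), combine Lemma \ref{lemma:LinShortTime}, Proposition \ref{prop:ShortTimeBilinear}, and Proposition \ref{prop:EnergyEstSol} first at regularity $0$ to bootstrap $\|u_\lambda\|_{F^{0,0}}\lesssim\|u_{0\lambda}\|_{L^2}$ and then at regularity $s$ to get a bound linear in $\|u_{0\lambda}\|_{H^{s,0}}$, iterate on unit time intervals using conservation of mass, and undo the scaling to obtain the exponential bound. No substantive difference from the paper's argument.
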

\begin{proof}
It is enough to consider $t>0$ by time-reversal. Firstly, we can rescale the initial data $u_0$ to $u_{0 \lambda}$, which satisfies $\| u_{0 \lambda} \|_{L^2} = \varepsilon \ll 1$. By the local well-posedness result in $H^{2,0}$ due to Molinet--Saut--Tzvetkov \cite{MolinetSautTzvetkov2007} we have for the corresponding solution $u_\lambda \in C([0,T],H^{s,0})$ with $T = T(\| u_{0 \lambda} \|_{H^{2,0}})$. Let $T' \leq 1 \wedge T$. We have the following set of estimates:
\begin{equation*}
\left\{ \begin{array}{cl}
\| u_\lambda \|_{F^{0,0}(T')} &\lesssim \| u_\lambda \|_{E^{0,0}(T')} + \| \partial_x (u_\lambda)^2 \|_{\mathcal{N}^{0,0}(T')}, \\
\| \partial_x (u_\lambda)^2 \|_{\mathcal{N}^{0,0}(T')} &\lesssim \| u_\lambda \|^2_{F^{0,0}(T')}, \\
\| u_\lambda \|_{E^{0,0}(T')}^2 &\lesssim \| u_{0 \lambda} \|^2_{L^2} + \| u_\lambda \|^3_{F^{0,0}(T')}.
\end{array} \right.
\end{equation*}
By $\| u_{0 \lambda} \|_{L^2} \ll 1$, a continuity argument like above gives for $T' \leq 1 \wedge T$:
\begin{equation}
\label{eq:APrioriF0}
\| u_\lambda \|_{F^{0,0}(T')} \lesssim \| u_{0 \lambda} \|_{L^2}.
\end{equation}
Secondly, we have
\begin{equation*}
\left\{ \begin{array}{cl}
\| u_\lambda \|_{F^{s,0}(T')} &\lesssim \| u_\lambda \|_{E^{s,0}(T')} + \| \partial_x (u_\lambda^2) \|_{\mathcal{N}^{s,0}(T')}, \\
\| \partial_x (u_\lambda^2) \|_{\mathcal{N}^{s,0}(T')} &\lesssim \| u_\lambda \|_{F^{s,0}(T')} \| u_\lambda \|_{F^{0,0}(T')}, \\
\| u_\lambda \|^2_{E^{s,0}(T')} &\lesssim \| u_{0 \lambda} \|^2_{H^{s,0}} + \| u_\lambda \|^2_{F^{s,0}(T')} \| u \|_{F^{0,0}(T')}.
\end{array} \right.
\end{equation*}
By \eqref{eq:APrioriF0} we obtain
\begin{equation}
\label{eq:APrioriFs}
\| u_\lambda \|_{F^{s,0}(T')} \lesssim \| u_{0 \lambda} \|_{H^{s,0}}.
\end{equation}
Consequently, $\| u_\lambda(t) \|_{H^{2,0}}$ remains bounded for $t \leq 1 \wedge T$ and by the local well-posedness result $u_\lambda$ exists until $t=1$: We have $u_\lambda \in C([0,1],H^{s,0})$ with $\| u_\lambda(1) \|_{H^{s,0}} \lesssim \| u_{0 \lambda} \|_{H^{s,0}}$. However, $\| u_{0 \lambda} \|_{L^2} = \| u_{0 \lambda} \|_{L^2} = \varepsilon \ll 1$. For this reason, the argument can be iterated and we find $u_\lambda \in C(\R; H^{s,0})$ with
\begin{equation*}
\| u_\lambda(t) \|_{H^{s,0}} \leq C_1 e^{C_2 t} \| u_{0 \lambda} \|_{H^{s,0}},
\end{equation*}
which follows from iterating \eqref{eq:APrioriFs}. Hence,
\begin{equation*}
\| u(t) \|_{H^{s,0}} \leq C_1(\lambda) e^{C_2(\lambda) t} \| u_0 \|_{H^{s,0}}
\end{equation*}
with $\lambda = \lambda(\| u_0 \|_{L^2})$. The proof is complete.
\end{proof}
\medskip
Now we prove the continuity of the data-to-solution map. In the first step, we show Lipschitz continuous dependence of the solutions in $L^2$ for small initial data of higher regularity.\\
\emph{Lipschitz continuous dependence in $L^2$:}
Let $s>5-2\alpha$ and  $u_1, u_2$ denote two local-in-time solutions with initial data $\| u_i(0) \|_{H^{s,0}} \leqslant \varepsilon_0$.
By the above argument, we have for $s' \geqslant s$ 
\begin{equation}\label{eq:fKPHighReg}
	\| u \|_{F^{s',0}(1)} \lesssim \|u_0\|_{H^{s',0}}.
\end{equation}
Let $v= u_1 - u_2$ denote the solution to the difference equation
\begin{equation*}
\partial_t v - D_x^\alpha \partial_x v - \partial_x^{-1} \partial_y^2 v = \partial_x (v (u_1+u_2))/2.
\end{equation*}
From Lemma \ref{lemma:LinShortTime}, Proposition \ref{prop:ShortTimeBilinear}, Proposition \ref{prop:fKPEnergyEstDiff}, we have
\begin{equation}
	\left\{ 	\begin{array}{cl}
		\|v\|_{F^{0,0}(1)} &\lesssim \|v\|_{E^{0,0}(1)} + \| \partial_x(v(u_1+u_2))\|_{\mathcal{N}^{0,0}(1)},	\\
		\|\partial_x(v(u_1+u_2))\|_{\mathcal{N}^{0,0}(1)} &\lesssim \| v\|_{F^{0,0}(1)} (\|u_1\|_{F^{s,0}(1)} + \|u_2\|_{F^{s,0}(1)}), \\
		\|v\|_{E^{0,0}(1)}^2 &\lesssim \|v(0)\|_{L^2}^2 + \|v\|_{F^{0,0}(1)}^2 (\|u_1\|_{F^{s,0}(1)} + \|u_2\|_{F^{s,0}(1)} ).
	\end{array}\right.
\end{equation}
This enables us to conclude
\begin{equation}\label{eq:fKPZeroReg}
	\|v\|_{F^{0,0}(1)} \lesssim \|v(0)\|_{L^2},
\end{equation}
since $\|u_i \|_{F^{s,0}(1)} \lesssim \varepsilon_0$ are chosen sufficiently small.

\medskip

\emph{Continuity of the data-to-solution mapping:}
Also, from Lemma \ref{lemma:LinShortTime}, Proposition \ref{prop:ShortTimeBilinear}, and Proposition \ref{prop:fKPEnergyEstDiff}, we have
\begin{equation}
	\left\{ 	\begin{array}{cl}
		\|v\|_{F^{s,0}(T)} &\lesssim \|v\|_{E^{s,0}(T)} + \| \partial_x(v(u_1+u_2))\|_{\mathcal{N}^{s,0}(T)}	\\
		\|\partial_x(v(u_1+u_2))\|_{\mathcal{N}^{s,0}(T)} &\lesssim \| v\|_{F^{s,0}(T)} (\|u_1\|_{F^{s,0}(T)} + \|u_2\|_{F^{s,0}(T)})\\
		\|v\|_{E^{s,0}(T)}^2 &\lesssim \|v(0)\|_{H^{s,0}}^2 + \|v\|_{F^{s,0}(T)}^3 \\ &\quad + \|v\|_{F^{0,0}(T)} \|v\|_{F^{s,0}(T)}\|u_2\|_{F^{2s,0}(T)}.
	\end{array}\right.
\end{equation}
From the above set of estimates, we can conclude a priori estimates for $\|v\|_{F^{s,0}(T)}$:
\begin{equation}\label{eq:APrioriV}
	\begin{split}
		\|v\|_{F^{s,0}(T)}^2 &\lesssim \|v(0)\|_{H^{s,0}}^2 + \|v\|_{F^{s,0}(T)}^3 + \|v\|_{F^{0,0}(T)} \|v\|_{F^{s,0}(T)}\|u_2\|_{F^{2s,0}(T)}.
	\end{split}
\end{equation}
We use the smallness of $\| u_i \|_{H^{s,0}}$ to absorb the term from the nonlinear estimate into the left-hand side.

For $s> 5-2\alpha$, let $\phi \in H^{s,0}$ be fixed and $\{\phi_n\}_{n=1}^{\infty} \in H^{\infty,0}$ be such that
\begin{equation}
	\lim_{n\rightarrow \infty}\phi_n = \phi.
\end{equation}
By rescaling and subcriticality, we can again assume that $\|\phi\|_{H^{s,0}} \leqslant \varepsilon_0 \ll 1$ and $\|\phi_n\|_{H^{s,0}} \leqslant 2\varepsilon_0 \ll 1 \text{ for all } n \in \N$.
Let $u_1$ be the solution corresponding to initial data $\phi_n$, and $u_2$ be the solution corresponding to initial data $P_{\leqslant N}\phi_n$. We construct the data-to-solution mapping as an extension of the data-to-solution mapping for smooth initial data. Let
\begin{equation*}
S_T^{\infty}(\phi_n)\in C([-1,1];H^{\infty,0})
\end{equation*}
denote the solution corresponding to smooth initial data. We can take the existence time as $1$ by the a priori estimates and persistence property argued above.

To prove the continuity of the data-to-solution map, we need to show that the sequence $S_T^{\infty}(\phi_n) \in C([-1,1];H^{\infty,0})$ is a Cauchy sequence in the space $C([-1,1]; H^{s,0})$, $s > 5-2\alpha$.\\
Hence, it suffices to show that for any $\delta>0$, there exists $M_{\delta} \in \N$ such that
\begin{equation*}
	\| S_T^{\infty}(\phi_n) - S_T^{\infty}(\phi_m)\|_{C([-1,1];H^{s,0})} \leqslant \delta ~~\text{for all } m,n \geqslant M_{\delta}.
\end{equation*}
For $K \in 2^{\N_0}$, let $\phi_n^K:=P_{\leqslant K}\phi_n$.
We have
\begin{equation}
	\begin{split}
		\| S_T^{\infty}(\phi_n) - S_T^{\infty}(\phi_m)\|_{C([-1,1];H^{s,0})}
		&\leqslant \| S_T^{\infty}(\phi_n) - S_T^{\infty}(\phi_n^K)\|_{C([-1,1]; H^{s,0})} \\
		&\quad + \| S_T^{\infty}(\phi_m) - S_T^{\infty}(\phi_m^K)\|_{C([-1,1];H^{s,0})} \\
		&\quad + \| S_T^{\infty}(\phi_n^K) - S_T^{\infty}(\phi_m^K)\|_{C([-1,1]; H^{s,0})}.
	\end{split}
\end{equation}
The third term can be handled by using the continuity of the data-to-solution map for smooth data in $H^{2,0}$:
	\begin{equation}
		\| S_T^{\infty}(\phi_n^K) - S_T^{\infty}(\phi_m^K)\|_{C([-1,1];H^{s,0})} \leqslant \| S_T^{\infty}(\phi_n^K) - S_T^{\infty}(\phi_m^K) \|_{H^{2,0}} \to 0
	\end{equation}
	for $m,n \to \infty$ because $\| \phi_m^K - \phi_n^K \|_{H^{2,0}} \to 0$.
Let $v=S_T^{\infty}(\phi_n) - S_T^{\infty}(\phi_n^K)$. We observe that $v$ is the solution corresponding to initial data $P_{>K}\phi_n$. From \eqref{eq:fKPZeroReg}, we have
\begin{equation*}
	\|v\|_{F^{0,0}(T)} \lesssim \|\phi_n -\phi_n^K\|_{L^2} \lesssim K^{-s}\|P_{>K}\phi_n\|_{H^{s,0}}.
\end{equation*}
From \eqref{eq:fKPHighReg}, we have for $u_2$:
\begin{equation}
	\| u_2\|_{F^{2s,0}(T)} \lesssim \|\phi_n^K\|_{H^{2s,0}} \lesssim K^s\|\phi_n\|_{H^{s,0}}.
\end{equation}
Combining the above with \eqref{eq:APrioriV}, we conclude an a priori estimate for $v$ which now depends on the profile of the initial data, namely on $P_{>K}\phi_n$. We have
\begin{equation*}
	\begin{split}
		\| S_T^{\infty}(\phi_n) - S_T^{\infty}(\phi_m)\|_{C([-1,1];H^{s,0})} \lesssim \|P_{>K}\phi_n\|_{H^{s,0}} + \|P_{>K}\phi_m\|_{H^{s,0}} + C(m,n,K).
	\end{split}
\end{equation*}
By the convergence of $\phi_n$ and choosing $K$ large enough so that
\begin{equation*}
	\|P_{>K}\phi_n\|_{H^{s,0}} + \|P_{>K}\phi_m\|_{H^{s,0}} <\varepsilon,
\end{equation*}
we conclude that $\{S_T^{\infty}(\phi_n)\}_{n\in \N}$ is a Cauchy sequence in $C([-1,1];H^{s,0})$. This shows that $S_T^\infty$ extends to a continuous map $S_T: H^{s,0} \to C([-1,1];H^{s,0})$. $\hfill \Box$


\section{Semilinear well-posedness}
\label{section:Semilinear}
For $\alpha>\frac{5}{2}$, we observe via estimates \eqref{eq:LW} and \eqref{eq:BilinearToProve} that we can remedy the derivative loss completely without having to use frequency-dependent time localization. We show local well-posedness through a fixed point argument. This we carry out within the standard Fourier restriction spaces as our auxiliary spaces.
Let  $s,b \in \R$ and $\omega_{\alpha}(\xi,\eta)=|\xi|^{\alpha}\xi+\frac{\eta^2}{\xi}$. The space $X^{s,b}$ corresponding to the fractional KP-I equation \eqref{eq:fKPI} is defined as the closure of Schwartz functions with respect to the norm
\begin{equation*}
	\| u\|_{X^{s,b}(\R \times \R^2)} := \| \langle \xi \rangle^s \langle \tau-\omega_{\alpha}(\xi,\eta)\rangle^b \hat{u} (\tau,\xi)\|_{L_{\tau,\xi,\eta}^2(\R \times \R^2)} = \|U_{\alpha}(-t)u\|_{H^{b}_tH^s_{x,y}(\R \times \R^2)},
\end{equation*}
where $U_{\alpha}(t)$ denotes the solution operator corresponding to the linear equation. We localize in time as usual by setting
\begin{equation*}
X_T^{s,b} = \{ f:[0,T] \times \R^2 \to \C \, | \; \exists \, \tilde{f}  \in X^{s,b}: \tilde{f} \big|_{[0,T]} = f \}
\end{equation*}
endowed with norm
\begin{equation*}
\| f \|_{X_T^{s,b}} = \inf_{\tilde{f} \big|_{[0,T]} = f} \| \tilde{f} \|_{X^{s,b}}.
\end{equation*}

 With the function spaces introduced, we give a precise version of Theorem \ref{thm:SemilinearLWP}.
\begin{theorem}
\label{thm:SemilinearLWPPrecise}
	Let $\alpha>\frac{5}{2}$ and $s> \frac{5}{4}-\frac{\alpha}{2}$. Then, there is $b>1/2$ such that for $T=T(\| u_0 \|_{H^{s,0}})$, \eqref{eq:fKPI} is analytically locally well-posed in $H^{s,0}$ with the solution lying in $X_T^{s,b} \hookrightarrow C([0,T];H^{s,0})$.
	\end{theorem}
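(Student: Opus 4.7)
The plan is to run a Picard iteration in the standard Bourgain space $X_T^{s,b}$ with $s > 5/4 - \alpha/2$ and $b = 1/2 + \delta$ for some small $\delta > 0$. The linear estimates
\begin{equation*}
\|U_\alpha(t)\phi\|_{X_T^{s,b}} \lesssim \|\phi\|_{H^{s,0}}, \qquad \Big\|\int_0^t U_\alpha(t-s)F(s)\,ds\Big\|_{X_T^{s,b}} \lesssim T^{\delta'} \|F\|_{X_T^{s,b-1+\delta''}}
\end{equation*}
are standard (after introducing a sharp time cutoff and using $b > 1/2$ to gain a small power of $T$). It remains to prove the key bilinear estimate
\begin{equation*}
\|\partial_x(uv)\|_{X^{s,b-1}} \lesssim \|u\|_{X^{s,b}} \|v\|_{X^{s,b}}.
\end{equation*}

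I would reduce this by Littlewood--Paley decomposition in $\xi$ and modulation, together with duality, to a family of trilinear bounds on the convolution integrals $\int (f_{N_1,L_1} * f_{N_2,L_2}) \cdot h_{N,L}$ against functions supported in $\tilde D_{N_i,L_i}$ respectively $\tilde D_{N,L}$. The proof then splits into the usual interaction types, $High \times Low \to High$ (and its symmetric partner), $High \times High \to Low$, $High \times High \to High$, plus a low-frequency case. In each type one separates a resonant subcase $L_{\max} \ll N_{\max}^\alpha N_{\min}$ from a non-resonant one $L_{\max} \gtrsim N_{\max}^\alpha N_{\min}$. In the resonant subcase the trilinear Loomis--Whitney bound \eqref{eq:LW} provides the decisive gain $N_{\max}^{-3\alpha/4+1/2} N_{\min}^{-1/2}$; in the non-resonant subcase one uses either the bilinear Strichartz estimate \eqref{eq:BilinearToProve} or the modulation-gaining estimate \eqref{eq:GuoNonReso} to convert large $L_{\max}$ into additional powers of $N_{\max}^{-\alpha/2}$ (possibly combined with the $L^4$-Strichartz estimate \eqref{eq:StriEmbed} when $L_{\max}$ is very large). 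Since $b > 1/2$, the sums over the $L_i$ converge absolutely, and the key dyadic bookkeeping is in the $N_i$ summation.

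The main obstacle is arranging the case analysis so that all four interaction types simultaneously close with the single threshold $s > 5/4 - \alpha/2$. The sharpest constraint is expected to arise from the $High \times Low \to High$ interaction at intermediate modulations (where Loomis--Whitney is applied): there the derivative loss $N_1$ from $\partial_x$ is balanced against the Loomis--Whitney gain $N_1^{-3\alpha/4+1/2}$, and the resulting net power of $N_1$ must be dominated by $N_1^s$ via the two input weights, while the low frequency $N_2 \lesssim 1$ must still permit summation. Pushing this balance to the extreme produces exactly the exponent $5/4 - \alpha/2$; for $\alpha > 5/2$ this is strictly below $0$ and the estimate closes.

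Once the bilinear estimate is in hand, a standard contraction argument in a small ball of $X_T^{s,b}$ centered at the linear evolution yields a unique fixed point for $T = T(\|u_0\|_{H^{s,0}})$. Analyticity of the data-to-solution map follows from the fact that the nonlinearity is a polynomial in $u$ and the fixed point is obtained by contraction. Continuity into $C([0,T]; H^{s,0})$ is ensured by the embedding $X_T^{s,b} \hookrightarrow C([0,T]; H^{s,0})$ valid for $b > 1/2$. Uniqueness and persistence of regularity in higher Sobolev norms follow from the same bilinear estimate run at regularity $s' \geq s$.
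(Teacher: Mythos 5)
Your proposal follows essentially the same route as the paper: duality plus dyadic decomposition in frequency and modulation, with the nonlinear Loomis--Whitney bound \eqref{eq:LW} in the resonant regime and the bilinear Strichartz estimate \eqref{eq:BilinearToProve}, the modulation-gaining bound \eqref{eq:GuoNonReso}, and the $L^4$ Strichartz embedding \eqref{eq:StriEmbed} otherwise, giving exactly the bilinear estimate of Proposition \ref{prop:XsbBilinear} and then a contraction argument in $X^{s,b}$ with analyticity from the fixed point construction. The only cosmetic difference is that the paper proves the estimates on the unit time interval for small data and obtains $T=T(\|u_0\|_{H^{s,0}})$ by rescaling via subcriticality \eqref{eq:SubcriticalInitialDataFKP}, rather than extracting a small power of $T$ from the Duhamel term as you suggest.
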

The section is devoted to the proof of Theorem \ref{thm:SemilinearLWPPrecise}. We begin with a reminder on the basic properties of $X^{s,b}$ spaces, which show that for the proof of the theorem, it suffices to show the bilinear estimate
\begin{equation*}
\| \partial_x (uv) \|_{X^{s,b-1}} \lesssim \| u \|_{X^{s,b}} \| v \|_{X^{s,b}} 
\end{equation*}
for some $b>1/2$. The bilinear estimate is proved in Subsection \ref{subsection:BilinearEstimate}.

\subsection{Properties of $X^{s,b}$ spaces} 
Proofs of the following basic properties can be found in \cite[Section~2.5]{Tao2006}.
First, recall that free solutions are in $X^{s,b}$ locally in time. Recall the linear propagator of \eqref{eq:fKPI} from \eqref{eq:LinearProp}.
\begin{lemma}[{cf.~\cite[Lemma~2.8]{Tao2006}}]
\label{lemma:XsbTimeLocalisedFreeSol}
Let $s \in \R$, $u_0 \in H^{s,0}(\R^2)$ and $\eta \in \mathcal{S}(\R)$. Then, the following estimate holds:
\begin{equation*}
\| \eta(t) U_\alpha(t) u_0 \|_{X^{s,b}} \lesssim_{b,\eta} \| u_0 \|_{H^{s,0}(\R^2)}.
\end{equation*}
\end{lemma}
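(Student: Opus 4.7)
The plan is to reduce the estimate to a product computation by invoking the alternative characterization of the $X^{s,b}$-norm stated in the definition, namely $\|u\|_{X^{s,b}} = \|U_\alpha(-t) u\|_{H^b_t H^s_{x,y}}$. The crucial observation is that $U_\alpha(t)$ is a Fourier multiplier acting only in the spatial variables, so it commutes with multiplication by the purely temporal factor $\eta(t)$. Consequently,
\begin{equation*}
    U_\alpha(-t)\bigl[\eta(t)\, U_\alpha(t) u_0\bigr] \;=\; \eta(t)\, U_\alpha(-t) U_\alpha(t) u_0 \;=\; \eta(t)\, u_0,
\end{equation*}
where in the last expression $u_0$ is regarded as a function on $\R\times\R^2$ independent of $t$.

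It then remains to compute $\|\eta(t) u_0\|_{H^b_t H^s_{x,y}}$, which is straightforward. The space-time Fourier transform of $\eta(t) u_0(x,y)$ equals $\hat{\eta}(\tau)\, \widehat{u_0}(\xi,\eta)$, so Plancherel's theorem together with the tensor product structure yields
\begin{equation*}
    \|\eta(t) u_0\|_{H^b_t H^s_{x,y}}^2 \;=\; \int \langle\tau\rangle^{2b} |\hat\eta(\tau)|^2\, d\tau \,\cdot\, \int \langle\xi\rangle^{2s}\, |\widehat{u_0}(\xi,\eta)|^2\, d\xi\, d\eta \;=\; \|\eta\|_{H^b(\R)}^2 \,\|u_0\|_{H^{s,0}(\R^2)}^2.
\end{equation*}
Since $\eta \in \mathcal{S}(\R)$, the quantity $\|\eta\|_{H^b(\R)}$ is finite for every $b \in \R$, and it contributes the implicit constant that depends on $b$ and $\eta$. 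There is no substantive obstacle in this argument; the entire content of the lemma is that the $X^{s,b}$-norm is engineered precisely so that free solutions localized in time by a Schwartz cutoff behave as well as $\eta$ does in $H^b(\R)$.
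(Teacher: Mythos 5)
Your argument is correct and is exactly the standard proof of this fact: the paper gives no proof of its own but cites \cite[Lemma~2.8]{Tao2006}, whose argument is precisely the reduction $U_\alpha(-t)[\eta(t)U_\alpha(t)u_0]=\eta(t)u_0$ followed by the tensor-product computation of the $H^b_t H^s_{x,y}$-norm. Nothing further is needed, since the weight $\langle\tau\rangle^{b}\langle\xi\rangle^{s}$ factorizes and $\|\eta\|_{H^b(\R)}<\infty$ for Schwartz $\eta$.
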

This yields the following transfer principle for $b>1/2$, stating that properties of free solutions are inherited by $X^{s,b}$-functions:
\begin{lemma}[{cf.~\cite[Lemma~2.9]{Tao2006}}]
Let $b>1/2$ and $s \in \R$. Let $Y$ be a Banach space comprised of functions in $\R \times \R^2$ with the property that
\begin{equation*}
\| e^{it \tau_0} U_\alpha(t) f \|_{Y} \lesssim \| f \|_{H^{s,0}}
\end{equation*}
for all $\tau_0 \in \R$ and $f \in H^{s,0}$. Then, we have the embedding
\begin{equation*}
\| u \|_Y \lesssim_b \| u \|_{X^{s,b}}.
\end{equation*}
\end{lemma}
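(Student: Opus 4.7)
The plan is to prove the lemma via the standard Fourier inversion representation of $X^{s,b}$ functions together with Minkowski's inequality. Write $w(t,x,y) = U_\alpha(-t) u(t,x,y)$, so that by the definition of $X^{s,b}$ we have $\| w \|_{H^b_t H^{s,0}_{x,y}} = \| u \|_{X^{s,b}}$. Using Fourier inversion in the temporal variable only, decompose
\begin{equation*}
w(t,x,y) = \frac{1}{2\pi} \int_{\R} e^{i t \tau_0} \widetilde{w}(\tau_0,x,y) \, d\tau_0,
\end{equation*}
where $\widetilde{w}(\tau_0,x,y) = \int_{\R} e^{-it\tau_0} w(t,x,y) \, dt$ denotes the Fourier transform in time only. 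Applying $U_\alpha(t)$ to both sides gives
\begin{equation*}
u(t,x,y) = \frac{1}{2\pi} \int_{\R} e^{i t \tau_0} U_\alpha(t) \widetilde{w}(\tau_0, \cdot) \, d\tau_0.
\end{equation*}

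Next, I would apply Minkowski's inequality together with the hypothesis on $Y$ to each slice $\widetilde{w}(\tau_0,\cdot) \in H^{s,0}$:
\begin{equation*}
\| u \|_Y \lesssim \int_{\R} \| e^{i t \tau_0} U_\alpha(t) \widetilde{w}(\tau_0,\cdot) \|_Y \, d\tau_0 \lesssim \int_{\R} \| \widetilde{w}(\tau_0,\cdot) \|_{H^{s,0}} \, d\tau_0.
\end{equation*}
The key point where $b>1/2$ enters is the Cauchy--Schwarz step: writing $1 = \langle \tau_0 \rangle^{-b} \cdot \langle \tau_0 \rangle^{b}$ and splitting,
\begin{equation*}
\int_{\R} \| \widetilde{w}(\tau_0,\cdot) \|_{H^{s,0}} \, d\tau_0 \leq \Big( \int_{\R} \langle \tau_0 \rangle^{-2b} d\tau_0 \Big)^{1/2} \Big( \int_{\R} \langle \tau_0 \rangle^{2b} \| \widetilde{w}(\tau_0,\cdot) \|^2_{H^{s,0}} d\tau_0 \Big)^{1/2}.
\end{equation*}
The first factor is finite precisely because $b>1/2$, and by Plancherel in time the second factor equals $\| w \|_{H^b_t H^{s,0}_{x,y}} = \| u \|_{X^{s,b}}$, finishing the estimate.

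There is no real obstacle here: the only quantitative input is the hypothesis bound on free solutions, the only analytic tool is Minkowski combined with Cauchy--Schwarz, and the only place where $b>1/2$ is used is in ensuring the integrability of $\langle \tau_0 \rangle^{-2b}$. The phase factor $e^{i t \tau_0}$ appearing in the hypothesis is essential because it absorbs the time-frequency shift introduced by modulations of $u$ away from the characteristic surface $\tau = \omega_\alpha(\xi,\eta)$, which is exactly what allows the transfer of any translation-invariant linear estimate for $U_\alpha(t) f$ to the full $X^{s,b}$ scale.
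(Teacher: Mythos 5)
Your argument is correct and is precisely the standard proof of the transfer principle from \cite[Lemma~2.9]{Tao2006}, which the paper cites rather than reproves: decompose $U_\alpha(-t)u$ into modulated free solutions by Fourier inversion in time, apply Minkowski and the hypothesis on $Y$ slice by slice, and close with Cauchy--Schwarz using the integrability of $\langle \tau_0 \rangle^{-2b}$ for $b>1/2$ together with Plancherel in time. Nothing further is needed.
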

By Duhamel's formula for solutions to 
\begin{equation*}
\left\{ \begin{array}{cl}
    \partial_t u - D_x^\alpha \partial_x u - \partial_x^{-1} \partial_y^2 u &= F(u), \\
    u(0) &= u_0, 
\end{array} \right.
\end{equation*}
we can write
\begin{equation}
\label{eq:DuhamelEnergyFormula}
u(t) = U_\alpha(t) u_0 + \int_0^t U_\alpha(t-s) F(u(s)) ds.
\end{equation}
The following energy inequality for $X^{s,b}$ spaces becomes evident:
\begin{lemma}\label{lemma:XsbEnergyEst}
Let $u$ be like in \eqref{eq:DuhamelEnergyFormula} and $\eta \in C^\infty_c(\R)$, $s \in \R$, and $b>1/2$. Then, the following estimate holds:
\begin{equation*}
\| \eta(t) u \|_{X^{s,b}} \lesssim \| u_0 \|_{H^{s,0}(\R^2)} + \| F(u) \|_{X^{s,b-1}}.
\end{equation*}
\end{lemma}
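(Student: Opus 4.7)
The plan is to apply the Duhamel identity \eqref{eq:DuhamelEnergyFormula} and bound the two resulting contributions separately. The free-evolution piece $\eta(t)U_\alpha(t)u_0$ is directly controlled in $X^{s,b}$ by $\|u_0\|_{H^{s,0}}$ via Lemma \ref{lemma:XsbTimeLocalisedFreeSol}, so all the real work is in the inhomogeneous bound
\begin{equation*}
\Big\| \eta(t) \int_0^t U_\alpha(t-s) F(u(s))\, ds \Big\|_{X^{s,b}} \lesssim \|F(u)\|_{X^{s,b-1}}.
\end{equation*}

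To handle this term I would exploit the defining identity $\|v\|_{X^{s,b}}=\|U_\alpha(-t)v\|_{H^b_t H^s_{x,y}}$ and conjugate the Duhamel integral by $U_\alpha(-t)$. Setting $G(s):=U_\alpha(-s)F(u(s))$, which satisfies $\|G\|_{H^{b-1}_t H^s_{x,y}}=\|F(u)\|_{X^{s,b-1}}$, the estimate reduces to
\begin{equation*}
\Big\| \eta(t) \int_0^t G(s)\, ds \Big\|_{H^b_t H^s_{x,y}} \lesssim \|G\|_{H^{b-1}_t H^s_{x,y}}.
\end{equation*}
Since antiderivation in $t$ commutes with multiplication by spatial Fourier multipliers, I can freeze $(\xi,\eta)$ and reduce further to the purely one-dimensional temporal inequality
\begin{equation*}
\Big\| \eta(t) \int_0^t g(s)\, ds \Big\|_{H^b_t(\R)} \lesssim \|g\|_{H^{b-1}_t(\R)}, \qquad b>\tfrac{1}{2}.
\end{equation*}

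For this scalar inequality I would Fourier-expand the antiderivative as $\int_0^t g(s)\, ds=(2\pi)^{-1}\int \hat g(\tau)\,(e^{it\tau}-1)/(i\tau)\,d\tau$ and split via a smooth cut-off at $|\tau|\sim 1$. On the low-frequency piece $|\tau|\lesssim 1$, Taylor expansion shows that $(e^{it\tau}-1)/(i\tau)$ is a bounded smooth function of $t$ uniformly in $\tau$, so $\eta(t)$ times this kernel has uniformly bounded $H^b_t$-norm, and the contribution is controlled by $\|\hat g\|_{L^1(|\tau|\lesssim 1)}\lesssim \|g\|_{H^{b-1}}$ via Cauchy-Schwarz. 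On the high-frequency piece $|\tau|\gtrsim 1$, the factor $1/(i\tau)$ supplies the missing derivative; splitting further into $\eta(t)e^{it\tau}/(i\tau)$ and $-\eta(t)/(i\tau)$, and using that $\hat\eta$ is Schwartz so that convolution against it on the Fourier side is harmless, both pieces yield operators bounded from $H^{b-1}_\tau$ to $H^b_t$ by a Minkowski/Cauchy-Schwarz argument together with the pointwise bound $\|\eta(t)e^{it\tau}\|_{H^b_t}\lesssim \langle\tau\rangle^b$. The main technical obstacle, where careful bookkeeping is required, is the uniform quantitative convolution bound on the high-frequency side; this is the classical Bourgain inhomogeneous energy estimate for Fourier restriction spaces, and the condition $b>1/2$ is the standard one required to close the argument (and to ensure in the first place that $X^{s,b}\hookrightarrow C_t H^{s,0}$, which makes the whole framework sensible).
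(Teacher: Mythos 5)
The paper does not prove this lemma at all: it is quoted as a standard property of $X^{s,b}$ spaces with a reference to \cite[Section~2.5]{Tao2006}, and your argument is exactly the classical proof given there — Duhamel splitting, conjugation by $U_\alpha(-t)$ to reduce to the scalar estimate $\| \eta(t)\int_0^t g\|_{H^b_t}\lesssim \|g\|_{H^{b-1}_t}$ for fixed $(\xi,\eta)$, and a low/high modulation decomposition of the kernel $(e^{it\tau}-1)/(i\tau)$, with $b>1/2$ entering through the Cauchy--Schwarz integral $\int_{|\tau|\gtrsim 1}|\tau|^{-2}\langle\tau\rangle^{2(1-b)}\,d\tau<\infty$. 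One bookkeeping caution on the high-frequency oscillatory piece $\eta(t)\int_{|\tau|\gtrsim 1}\hat g(\tau)\,e^{it\tau}/(i\tau)\,d\tau$: applying Minkowski with the bound $\|\eta(t)e^{it\tau}\|_{H^b_t}\lesssim\langle\tau\rangle^b$ is lossy (the resulting Cauchy--Schwarz weight $\int_{|\tau|\gtrsim1}\langle\tau\rangle^{2}|\tau|^{-2}\,d\tau$ diverges); the correct route — which your remark about convolution with the Schwartz function $\hat\eta$ already points to — is to note that this piece equals $\eta\cdot h$ with $\hat h(\tau)=\hat g(\tau)\mathbf{1}_{\{|\tau|\gtrsim1\}}/(i\tau)$, so $\|h\|_{H^b_t}\lesssim\|g\|_{H^{b-1}_t}$, and then use that multiplication by a fixed Schwartz cutoff is bounded on $H^b_t$. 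With that step organized as indicated, the proof is complete and coincides with the one the paper delegates to the cited reference.
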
	
For the frequency and modulation localization operators we use same notations like in Section \ref{section:Notation}.

\subsection{Bilinear estimate} To prove Theorem \ref{thm:SemilinearLWP} via the fixed point theorem, we require to control the nonlinearity in the $X^{s,b-1}$ norm which we do in the following. We prove the estimate in a fixed time interval $[0,1]$ so that we do not have to keep track of additional decomposition in modulation or gain of small powers in $T$.  For brevity, we also omit the subscript $1$ for the length of the time interval in the $X^{s,b}$ norms.
\label{subsection:BilinearEstimate}
\begin{proposition}\label{prop:XsbBilinear}
Let $\frac{5}{2}<\alpha<4$. Then, for $s> \frac{5}{4}-\frac{\alpha}{2}$, there is some $b>\frac{1}{2}$ such that the following estimate holds:
\begin{equation*}
	\|\partial_x(uv)\|_{X^{s,b-1}} \lesssim \|u\|_{X^{s,b}}\|v\|_{X^{s,b}}.
\end{equation*}

For $s \geq 0$, there is some $b>\frac{1}{2}$ such that the following holds:
\begin{equation*}
	\|\partial_x(uv)\|_{X^{s,b-1}} \lesssim \|u\|_{X^{0,b}}\|v\|_{X^{s,b}}.
\end{equation*}

\begin{proof}
	By duality and Plancherel's theorem, we can reduce the above to proving
	\begin{equation}\label{eq:DualXsb}
		\Big | \int_{\R^3} \xi \widehat{(uv)} \cdot \overline{\widehat{w}}~ d\tau d\xi d\eta \Big| \lesssim \|u\|_{X^{s,b}} \|v\|_{X^{s,b}} \|w\|_{X^{-s,1-b}}.
	\end{equation}

Let $N_i \in 2^{\Z}$, $L_j \in 2^{\N_0}$. 	
For functions $f_{N_1,L_1}, g_{N_2,L_2}$ and $h_{N,L}$ supported in $\tilde{D}_{N_1,L_1}, \tilde{D}_{N_2,L_2}$ and $\tilde{D}_{N,L}$, respectively, we focus on dyadic estimates
\begin{equation}\label{eq:DualXsbD}
	\Big  |\int(f_{N_1,L_1}\ast g_{N_2,L_2})\cdot h_{N,L} \Big | \lesssim L_1^{\frac{1}{2}} L_2^{\frac{1}{2}} L^{\frac{1}{2}-} C(N_1,N_2,N) \|f_{N_1,L_1}\|_{L^2} \| g_{N_2,L_2}\|_{L^2} \|h_{N,L}\|_{L^2}.
\end{equation}
We turn to a case-by-case analysis depending on the size of the $x$ frequencies.

\medskip

\noindent \textbf{High-High-Low} ($N_2\ll N_1\sim N$): 
We first treat the $High \times Low \to High$ interaction and prove \eqref{eq:DualXsbD} with $C(N_1,N_2) =  N_1^{(\frac{1}{4}-\frac{\alpha}{2})+} N_2^{0+}$. Summing up the above  in $L,L_1,L_2,N,N_1$, and $N_2$ proves \eqref{eq:DualXsb} for $s> \frac{5}{4}-\frac{\alpha}{2}$ after taking into account the additional derivative loss. For $L_{\max}=\max(L_1,L_2,L)$, two cases arise:\\
	$\bullet$	\underline{$L_{\max}\leqslant N_1^{\alpha}N_2$}:
	For $N_2 \lesssim N_1^{\frac{1-\alpha}{2}}$, we use Cauchy-Schwarz inequality and the bilinear Strichartz estimate \eqref{eq:BilinearToProve}. Note that since $\alpha>\frac{5}{2}$, we have $N_2 \ll 1$ necessarily.
	\begin{equation}
	\label{eq:ResonantEstimateSemilinear}
		\begin{split}
			\Big |\int (f_{N_1,L_1}\ast g_{N_2,L_2})\cdot h_{N,L} \Big| &\lesssim (L_1L_2)^{\frac{1}{2}} \frac{N_2^{\frac{1}{2}}}{N_1^{\frac{\alpha}{4}}} \|f_{N_1,L_1}\|_{L^2} \|g_{N_2,L_2}\|_{L^2} \|h_{N,L}\|_{L^2}\\
			&\lesssim (L_1L_2)^{\frac{1}{2}} 
			N_1^{(\frac{1}{4}-\frac{\alpha}{2})-} N_2^{0+} \|f_{N_1,L_1}\|_{L^2}\|g_{N_2,L_2}\|_{L^2}\|h_{N,L}\|_{L^2}.
		\end{split}
	\end{equation}	
	
	For $N_2\gtrsim N_1^{\frac{1-\alpha}{2}}$, we use the nonlinear Loomis--Whitney inequality \eqref{eq:LW}:
	\begin{equation*}
		\begin{split}
			\text{LHS of } \eqref{eq:DualXsbD} &\lesssim (LL_1L_2)^{\frac{1}{2}} N_1^{-\frac{3\alpha}{4}+\frac{1}{2}} N_2^{-\frac{1}{2}} \|f_{N_1,L_1}\|_{L^2}\|g_{N_2,L_2}\|_{L^2}\|h_{N,L}\|_{L^2}\\
			&\lesssim (L_1L_2)^{\frac{1}{2}} L^{\frac{1}{2}-} N_1^{(\frac{1}{4}-\frac{\alpha}{2})+} N_2^{0+} \|f_{N_1,L_1}\|_{L^2}\|g_{N_2,L_2}\|_{L^2}\|h_{N,L}\|_{L^2}.
		\end{split}
	\end{equation*}
	$\bullet$	\underline{$L_{\max}\geqslant N_1^{\alpha}N_2$}: For $N_2\gtrsim 1$, we use the embedding \eqref{eq:StriEmbed}. Without loss of generality, we assume that $L_{\max} = L$ (other assumptions give same or improved estimates). Using Plancherel's identity and Cauchy-Schwarz inequality, we have
	\begin{equation*}
		\begin{split}
			\text{LHS of } \eqref{eq:DualXsbD} &\lesssim \|\mathcal{F}^{-1}(f_{N_1,L_1})\|_{L^4}\|\mathcal{F}^{-1}(g_{N_2,L_2})\|_{L^4}\|h_{N,L}\|_{L^2}\\
			&\lesssim N_1^{\frac{1}{4}-\frac{\alpha}{8}} N_2^{\frac{1}{4}-\frac{\alpha}{8}} N_1^{-\frac{\alpha}{2}+} N_2^{-\frac{1}{2}+} (L_1L_2)^{\frac{1}{2}} L^{\frac{1}{2}-} \|f_{N_1,L_1}\|_{L^2}\|g_{N_2,L_2}\|_{L^2}\|h_{N,L}\|_{L^2}\\
			&=N_1^{(\frac{1}{4}-\frac{5\alpha}{8})+} N_2^{(-\frac{1}{4}-\frac{\alpha}{8})+} (L_1L_2)^{\frac{1}{2}} L^{\frac{1}{2}-}\|f_{N_1,L_1}\|_{L^2}\|g_{N_2,L_2}\|_{L^2}\|h_{N,L}\|_{L^2}.
		\end{split}	
	\end{equation*}
	For $N_2\lesssim 1$, we use the estimate \eqref{eq:GuoNonReso}:
	\begin{equation*}
		\begin{split}
			\text{LHS of } \eqref{eq:DualXsbD} &\lesssim (L_1L_2)^{\frac{1}{2}} L^{\frac{1}{4}} N_1^{-\frac{\alpha}{2}} N_2^{\frac{1}{4}} \|f_{N_1,L_1}\|_{L^2}\|g_{N_2,L_2}\|_{L^2} \|h_{N,L}\|_{L^2}\\
			&\lesssim (L_1L_2)^{\frac{1}{2}} L^{\frac{1}{2}-} N_1^{-\frac{3\alpha}{4}+} N_2^{0+} \|f_{N_1,L_1}\|_{L^2}\|g_{N_2,L_2}\|_{L^2}\|h_{N,L}\|_{L^2},
		\end{split}
	\end{equation*}
which is sufficient since $1-\frac{3\alpha}{4} + <0$ for $\alpha>\frac{5}{2}$.

\medskip

The $High \times High \to Low$ interaction ($N \ll N_1 \sim N_2$) is treated as follows:  We can argue dually to the previous case, but arguing like in \eqref{eq:ResonantEstimateSemilinear} we find
\begin{equation*}
\big| \int (f_{N_1,L_1}\ast g_{N_2,L_2})\cdot h_{N,L} \Big| \lesssim (L L_1)^{\frac{1}{2}} N^{0+} N_1^{(\frac{1}{4}-\frac{\alpha}{2})-} \| f_{N_1,L_1} \|_{L^2} \| g_{N_2,L_2} \|_{L^2} \| h_{N,L} \|_{L^2}.
\end{equation*}
To lower the modulation of $h$, we interpolate with the following estimate, which we find from two $L^4$ Strichartz estimates:
\begin{equation*}
\big| \int (f_{N_1,L_1}\ast g_{N_2,L_2})\cdot h_{N,L} \Big| \lesssim (L_1 L_2)^{\frac{1}{2}} N_1^{\frac{2-\alpha}{4}} \| f_{N_1,L_1} \|_{L^2} \| g_{N_2,L_2} \|_{L^2} \| h_{N,L} \|_{L^2}.
\end{equation*}

\medskip

\noindent \textbf{Very low frequencies} ($N_1\sim N_2\sim N \lesssim 1$): After using Plancherel's identity and Cauchy-Schwarz inequality, we use the linear $L^4$ Strichartz estimate via \eqref{eq:StriEmbed}:
\begin{equation*}
	\begin{split}
\Big |\int (f_{N_1,L_1} \ast g_{N_2,L_2})\cdot h_{N,L}\Big|  &= \Big|\int \mathcal{F}^{-1}(f_{N_1,L_1}) \mathcal{F}^{-1}(g_{N_2,L_2})\mathcal{F}^{-1}(h_{N,L}) \Big|\\
&\lesssim \|\mathcal{F}^{-1}(f_{N_1,L_1})\|_{L^4} \|\mathcal{F}^{-1}(g_{N_2,L_2})\|_{L^4}\|h_{N,L}\|_{L^2}\\
&\lesssim (L_1L_2)^{\frac{1}{2}} L^{\frac{1}{2}-}\|f_{N_1,L_1}\|_{L^2}\|g_{N_2,L_2}\|_{L^2}\|h_{N,L}\|_{L^2},
\end{split}
\end{equation*}  
which is sufficient for \eqref{eq:DualXsbD}.

\medskip

\noindent \textbf{Three comparable frequencies} ($N_1 \sim N_2 \sim N \gtrsim 1$): We shall prove the estimate \eqref{eq:DualXsbD} with $C(N_1,N_2,N) = N^{-\frac{3\alpha}{4}+}$ by considering two cases:\\
$\bullet$ \underline{$L_{\max} \leqslant N_1^{\alpha+1}$}: We use the estimate \eqref{eq:LW}:
\begin{equation*}
	\begin{split}
		\text{LHS of }\eqref{eq:DualXsbD} \lesssim N_1^{-\frac{3\alpha}{4}+} (L_1L_2)^{\frac{1}{2}}L^{\frac{1}{2}-} \|f_{N_1,L_1}\|_{L^2}\|g_{N_2,L_2}\|_{L^2}\|h_{N,L}\|_{L^2}.
	\end{split}
\end{equation*}
$\bullet$ \underline{$L_{\max} \geqslant N_1^{\alpha+1}$}: We assume $L_{\max}=L$ and employ the linear Strichartz estimate via \eqref{eq:StriEmbed}
\begin{equation*}
	\begin{split}
	\text{LHS of } \eqref{eq:DualXsbD} &\lesssim \|\mathcal{F}^{-1}(f_{N_1,L_1})\|_{L^4} \|\mathcal{F}^{-1}(g_{N_2,L_2})\|_{L^4}\|h_{N,L}\|_{L^2} \\
	&\lesssim N_1^{\frac{2-\alpha}{8}} N_2^{\frac{2-\alpha}{8}} (L_1L_2)^{\frac{1}{2}} \|f_{N_1,L_1}\|_{L^2}\|g_{N_2,L_2}\|_{L^2}\|h_{N,L}\|_{L^2}\\
	&\lesssim N_1^{-\frac{3\alpha}{4}+} (L_1L_2)^{\frac{1}{2}}L^{\frac{1}{2}-} \|f_{N_1,L_1}\|_{L^2}\|g_{N_2,L_2}\|_{L^2}\|h_{N,L}\|_{L^2}.
	\end{split} 
\end{equation*}
In all the cases considered above, we can sum up the dyadic estimates in frequency and modulation for $\alpha>\frac{5}{2}$ owing to $C(N_1,N_2,N)$. This proves the estimate \eqref{eq:DualXsb}.
\end{proof}
\end{proposition}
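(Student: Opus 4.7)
The plan is to establish the bilinear estimate by duality, Littlewood--Paley decomposition, and a dyadic case analysis in Fourier space. By Plancherel and duality, it suffices to show
\[
\Big|\int \xi\,\widehat{uv}\,\overline{\widehat{w}}\,d\tau\,d\xi\,d\eta\Big| \lesssim \|u\|_{X^{s,b}}\,\|v\|_{X^{s,b}}\,\|w\|_{X^{-s,1-b}}.
\]
After decomposing $u$, $v$, $w$ in both spatial $x$-frequency $N_1, N_2, N \in 2^{\Z}$ and modulation $L_1, L_2, L \in 2^{\N_0}$, this reduces to uniform dyadic inequalities of the form
\[
\Big|\int (f_{N_1,L_1} \ast g_{N_2,L_2}) \cdot h_{N,L}\Big| \lesssim L_1^{1/2} L_2^{1/2} L^{1/2-}\,C(N_1,N_2,N)\,\|f_{N_1,L_1}\|_2 \|g_{N_2,L_2}\|_2 \|h_{N,L}\|_2,
\]
where the small loss $L^{1/2-}$ (instead of $L^{1/2}$) leaves room to absorb the $X^{-s,1-b}$-weight on $w$ through geometric summation provided $b>1/2$ is chosen sufficiently close to $1/2$.

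I will then split into the four standard frequency regimes --- $High \times Low \to High$, $High \times High \to Low$, three comparable frequencies, and very low frequencies --- and within each into a resonant subcase $L_{\max} \lesssim N_{\max}^{\alpha} N_{\min}$ versus a non-resonant subcase $L_{\max} \gtrsim N_{\max}^{\alpha} N_{\min}$. In the resonant subcase I will invoke the nonlinear Loomis--Whitney inequality from Lemma \ref{lemma:LW}, yielding $C \sim N_{\max}^{1/2-3\alpha/4} N_{\min}^{-1/2}$. When $N_{\min}$ drops below the crossover $N_{\max}^{(1-\alpha)/2}$ --- which forces $N_{\min} \ll 1$ since $\alpha > 5/2$ --- I will instead apply the bilinear Strichartz estimate \eqref{eq:BilinearToProve}, giving $C \sim N_{\min}^{1/2} N_{\max}^{-\alpha/4}$. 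In the non-resonant subcase the large modulation is converted into a gain via two $L^{4}$-Strichartz embeddings \eqref{eq:StriEmbed}, or by the trilinear non-resonant estimate \eqref{eq:GuoNonReso} when one frequency is $\lesssim 1$.

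The decisive case fixing the regularity threshold is the $High \times Low \to High$ resonant interaction: both Loomis--Whitney and bilinear Strichartz yield the same bound $C \lesssim N_1^{1/4 - \alpha/2+}$ (with a negligible $N_2^{0+}$). Pulling out the derivative $|\xi|\sim N_1$ from the symbol produces the growth $N_1^{5/4 - \alpha/2+}$, which must be absorbed by $N_1^{-s}$ from the $X^{s,b}$-norm of the high-frequency input and $N_1^{-s}$ from the $X^{-s,1-b}$-norm of $w$ (both of which sit at frequency $\sim N_1$). This forces precisely $s > 5/4 - \alpha/2$. The remaining three regimes produce constants with a strictly negative power of the largest frequency (for instance, $1 - 3\alpha/4 < 0$ for $\alpha > 5/2$) and sum trivially.

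The main technical obstacle I anticipate is the bookkeeping of geometric summation across all six dyadic indices, together with the smooth passage in the low-frequency limit $N_2 \to 0$, where Loomis--Whitney, bilinear Strichartz, and \eqref{eq:GuoNonReso} must be interpolated so that every dyadic $N_2 \in 2^{\Z}$ is covered by a summable estimate. The second, asymmetric estimate (with $u \in X^{0,b}$) requires no further input: one reruns the same dyadic analysis, placing the Sobolev weight $N_2^{s}$ only on $v$; since the critical $High \times Low \to High$ case already used the Sobolev weight only at the high frequency $N_1$, the asymmetric weighting does not affect the threshold.
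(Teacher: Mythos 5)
For the first (symmetric) estimate your plan is essentially the paper's proof: the same duality reduction to \eqref{eq:DualXsb}, the same dyadic block estimate \eqref{eq:DualXsbD} with the $L^{\frac12-}$ loss, the same resonant/non-resonant split at $L_{\max}\sim N_{\max}^{\alpha}N_{\min}$, the same crossover $N_2\sim N_1^{\frac{1-\alpha}{2}}$ between the bilinear Strichartz estimate \eqref{eq:BilinearToProve} and the nonlinear Loomis--Whitney bound \eqref{eq:LW}, the same use of \eqref{eq:StriEmbed} and \eqref{eq:GuoNonReso} in the non-resonant/low-frequency regimes, and the same identification of the $High\times Low\to High$ resonant block as the case that fixes $s>\frac54-\frac{\alpha}{2}$ (your bookkeeping sentence about ``absorbing'' the loss by the two $N_1^{\pm s}$ weights is slightly garbled, since those two factors cancel and the threshold actually comes from summing $N_2^{-s}$ up to $N_2\sim N_1$, but the conclusion and mechanism are the right ones). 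The only substantive detail you do not mention is the interpolation used in the $High\times High\to Low$ case to trade $L^{\frac12}$ for $L^{\frac12-}$ on the dual modulation, which is a technicality consistent with the obstacles you anticipate.

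The genuine gap is your last claim, that the asymmetric estimate with $u\in X^{0,b}$ ``requires no further input'' because the critical case places the Sobolev weight only at the high frequency. In dual form the factor $w$ contributes $\langle\xi\rangle^{-s}\sim N^{-s}$ at the output frequency $N\sim N_{\max}$, so each block must produce a compensating $N_{\max}^{s}$ from the inputs. In the symmetric estimate this comes from the high-frequency input measured in $X^{s,b}$; but in the asymmetric estimate the high frequency may sit on $u$, which only carries the $X^{0,b}$ norm, and then the only available weight is $\langle N_2\rangle^{s}$ from the low-frequency factor. Since the dyadic gain $N_1^{\frac54-\frac{\alpha}{2}+}$ is independent of $s$, the block bound needed is $N_1^{s+\frac54-\frac{\alpha}{2}+}\lesssim\langle N_2\rangle^{s}$, which is not summable once $s>\frac{\alpha}{2}-\frac54$ (and $\frac{\alpha}{2}-\frac54<\frac34$ for $\alpha<4$); so ``rerunning the same dyadic analysis'' does not give the one-sided bound for all $s\geqslant 0$. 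What does follow from the same blocks is the tame two-term estimate, or the one-sided bound in the configuration where the $X^{s,b}$-factor carries the dominant frequency; note that the paper's own proof is also silent here, as it only establishes \eqref{eq:DualXsb}, so this is a gap relative to the stated second inequality rather than a divergence from the paper's argument.
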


\subsection{Proof of Theorem \ref{thm:SemilinearLWPPrecise} and Corollary \ref{thm:SemilinearGWP}} 
\label{subsection:ProofOfSemilinear}We give a short proof of Theorem \ref{thm:SemilinearLWPPrecise} by using Lemma \ref{lemma:XsbEnergyEst} and Proposition \ref{prop:XsbBilinear}. We first prove the result on a fixed time interval $[0,1]$ for small initial data. Thereafter, we argue by scaling and subcriticality that the solution also exists for large initial data on a time interval $[0,T]$ where $T = T(\|u_0\|_{H^{s,0}})$. With $\eta$ as before, we define $\Gamma$ as follows:
\begin{equation*}
	\Gamma(u)(t) = \eta (t)U_{\alpha}(t)u_0 + \eta(t)\int_0^tU_{\alpha}(t-s)(u\partial_xu)(s)ds.
\end{equation*}
We shall prove that $u$ is a fixed point of the map $\Gamma$ in a closed ball $\bar{B}_{R} \subset X^{s,b}_1$ of radius $R$ for initial data with sufficiently small norm. We first show that $\Gamma$ is well-defined. For $u \in \bar{B}_R$, using Lemma \ref{lemma:XsbTimeLocalisedFreeSol}, Lemma \ref{lemma:XsbEnergyEst} and Proposition \ref{prop:XsbBilinear}, we obtain
\begin{equation*}
	\begin{split}
	\|\Gamma (u)\|_{X^{s,b}_1} &\lesssim \|\eta(t)U_{\alpha}(t)u_0\|_{X^{s,b}_1} + \Big \| \eta(t)\int_0^tU_{\alpha}(t-s)(u\partial_x u)(s)ds \Big \|_{X^{s,b}_1} \\
	&\lesssim \|u_0\|_{H^{s,0}} + \|u\partial_x u\|_{X^{s,(b-1)}_1}\\ 
	&\leqslant C( \|u_0\|_{H^{s,0}}+ \|u\|_{X^{s,b}_1}^2).
\end{split}	
\end{equation*}
 If we choose the radius $R$  of the ball such that $C\|u_0\|_{H^{s,0}} = \frac{R}{2}$, then 
\begin{equation}
\label{eq:T1}
		\|\Gamma (u)\|_{X^{s,b}_1} \leqslant \frac{R}{2}+ CR^2 \leqslant R, \text{ if } C(2C\|u_0\|_{H^{s,0}}) <\frac{1}{2},
\end{equation}
which shows that $\Gamma$ is well-defined.
 To show that $\Gamma$ is a contraction, for $u, v \in \bar{B}_R$, we have, using Lemma \ref{lemma:XsbEnergyEst} and Proposition \ref{prop:XsbBilinear}
\begin{equation*}
	\begin{split}
		\| \Gamma (u_1) - \Gamma (u_2)\|_{X^{s,b}_1} &\lesssim \Big \| \eta (t)\int_0^t U_{\alpha}(t-s)(u_1 \partial_x u_1 - u_2 \partial_x u_2)(s)ds \Big\|_{X^{s,b}_1} \\
		&\lesssim \|\partial_x(u_1+u_2)(u_1-u_2)\|_{X^{s,(b-1)}_1}\\
		&\lesssim \|u_1+u_2\|_{X^{s,b}_1}\|u_1-u_2\|_{X^{s,b}_1}\\
		&\leqslant 2C_1 R \|u_1-u_2\|_{X^{s,b}_1},
		\end{split}
\end{equation*} 
Hence, $\Gamma$ becomes a contraction on $X^{s,b}_1$ if $\|u_0\|_{H^{s,0}}$ is such that
\begin{equation}\label{eq:T2}
2C_1 (2C\|u_0\|_{H^{s,0}}) <1.
\end{equation}
Using Banach's fixed point theorem, we conclude the existence of a unique solution to \eqref{eq:fKPI} in $X^{s,b}_1$ where the norm of the initial data is chosen as the minimum of that given by \eqref{eq:T1} and \eqref{eq:T2}. \\
Now suppose that $\|u_0\|_{H^{s,0}} \leqslant \varepsilon$ for $\varepsilon \ll 1$ and we have obtained a solution corresponding to this small initial data on the time interval $[0,1]$. 
For $\alpha>\frac{5}{2}$ and $s > \frac{5}{4}-\frac{\alpha}{2}$, from \eqref{eq:SubcriticalInitialDataFKP}, we observe that the anisotropic Sobolev regularity $(s,0)$ is subcritical. Thus any large initial data, say $\|u_0\|_{H^{s,0}} \geqslant \varepsilon$, can be scaled to small data via \eqref{eq:ScalingfKPInitialData}. We then invoke the above argument to obtain a unique solution to \eqref{eq:fKPI} on a time interval $[0,T]$ where $T$ depends only on the norm of the large initial data, $s$, and $\alpha$. The proof is complete.
$\hfill \Box$	

\medskip

Finally, we turn to the proof of Corollary \ref{thm:SemilinearGWP}.	
\begin{proof}[Proof~of~Corollary~\ref{thm:SemilinearGWP}]
In the following let $s \geq 0$. We consider two real-valued initial data $u_0^{(i)} \in H^{s,0}(\R^2)$. We rescale to $\| u_{0 \lambda}^{(i)} \|_{L^2} \leq \varepsilon \ll 1$. By the above this ensures local solutions $u_\lambda^{(i)} \in X^{0,b}$ to \eqref{eq:fKPI} and $\| u_\lambda^{(i)} \|_{X^{0,b}} \lesssim \varepsilon \ll 1$. We let $v_\lambda = u_{\lambda}^{(1)} - u_\lambda^{(2)}$. We obtain by the bilinear estimate:
\begin{equation}
\label{eq:DifferenceEstimate}
\| v_\lambda \|_{X^{s,b}} \lesssim \| v_\lambda(0) \|_{H^{s,0}} + \| v_\lambda \|_{X^{s,b}} ( \| u_\lambda^{(1)} \|_{X^{0,b}} + \| u_\lambda^{(2)} \|_{X^{0,b}}).
\end{equation}
This implies
\begin{equation*}
\| v_\lambda \|_{X^{s,b}} \leqslant C \| v_\lambda(0) \|_{H^{s,0}}.
\end{equation*}
However, by conservation of mass, $\| u_\lambda^{(1)}(1) \|_{L^2} + \| u_\lambda^{(2)}(1) \|_{L^2} \leq 2 \varepsilon \ll 1$. So we can solve \eqref{eq:fKPI} for $t \in [1,2]$ with $u_\lambda^{(i)} \in X^{0,b}_2$ for $i=1,2$, and moreover we can iterate \eqref{eq:DifferenceEstimate}. This gives
\begin{equation*}
\| v_\lambda(2) \|_{H^{s,0}} \leqslant C^2 \| v_\lambda(0) \|_{H^{s,0}},
\end{equation*}
and iterating the argument yields
\begin{equation*}
\| v_\lambda(t) \|_{H^{s,0}} \leqslant C_1 e^{C_2 t} \| v_\lambda(0) \|_{H^{s,0}}.
\end{equation*}
Hence, for $\| v_\lambda(0) \|_{H^{s,0}} \to 0$, we have $\sup_{t \in [0,T]} \| v_\lambda(t) \|_{H^{s,0}} \to 0$. By reversing the scaling, we obtain global well-posedness.
\end{proof}

\section*{Appendix: Calculation of the determinant}
Let $S_i$, $i=1,2$ be the hypersurface given by
\begin{equation*}
	S_i = \Big\{(\tau_i,\xi_i,\eta_i)\in \R\times\R \times \R: \tau_i=\xi_i|\xi_i|^{\alpha}+\frac{\eta_i^2}{\xi_i}\Big\}.
\end{equation*}
The normal to the hypersurface $S_i$ is given by
\begin{equation*}
	\mathbf{n}_i=\Big (1,-( (\alpha+1)|\xi_i|^{\alpha}-\frac{\eta_i^2}{\xi_i^2}), -\frac{2\eta_i}{\xi_i}  \Big).
\end{equation*}
Under the convolution constraints, the third hypersurface will have a normal vector given by
\begin{equation*}
	\mathbf{n}_3=\Big (1,-((\alpha+1)|\xi_1+\xi_2|^{\alpha}-\frac{(\eta_1+\eta_2)^2}{(\xi_1+\xi_2)^2}),- \frac{2(\eta_1+\eta_2)}{\xi_1+\xi_2}  \Big).
\end{equation*}
For the following arguments recall the dyadic localizations\footnote{We omit $'$ for the frequency variables to lighten the notation.}
\begin{equation}
\label{eq:DyadicLocalizationAppendix}
|\xi_1| \sim |\xi_1+\xi_2| \sim 1, \quad |\xi_2| \sim N_{\min}/N_{\max}, \quad \big| \frac{\eta_1}{\xi_1} - \frac{\eta_2}{\xi_2} \big| \sim 1.
\end{equation}

We compute the determinant of these normals. Let
\begin{equation*}
    B :=\begin{vmatrix}
			(\alpha+1)|\xi_1|^{\alpha} -\frac{\eta_1^2}{\xi_1^2} & (\alpha+1)|\xi_2|^{\alpha}-\frac{\eta_2^2}{\xi_2^2} & (\alpha+1)(|\xi_1+\xi_2|)^{\alpha} -\frac{(\eta_1+\eta_2)^2}{(\xi_1+\xi_2)^2}\\
			\frac{2\eta_1}{\xi_1} & \frac{2\eta_2}{\xi_2} & \frac{2(\eta_1+\eta_2)}{\xi_1+\xi_2}\\
			1&1&1
		\end{vmatrix}
\end{equation*}
We obtain by multilinearity for $\tilde{B} = \frac{\xi_1 \xi_2 (\xi_1 + \xi_2) B}{2}$:

\begin{footnotesize}
\begin{equation*}
	\begin{split}
		\tilde{B} &=
		\begin{vmatrix}
			(\alpha+1)|\xi_1|^{\alpha} -\frac{\eta_1^2}{\xi_1^2} & (\alpha+1)|\xi_2|^{\alpha}-\frac{\eta_2^2}{\xi_2^2} & (\alpha+1)(|\xi_1+\xi_2|)^{\alpha} -\frac{(\eta_1+\eta_2)^2}{(\xi_1+\xi_2)^2}\\
			\eta_1 \xi_2(\xi_1+\xi_2) & \eta_2\xi_1(\xi_1+\xi_2) & (\eta_1+\eta_2)\xi_1\xi_2\\
			1 & 1& 1
		\end{vmatrix}\\
		&= 
		\begin{vmatrix}
			(\alpha+1)|\xi_1|^{\alpha} -\frac{\eta_1^2}{\xi_1^2} & (\alpha+1)|\xi_2|^{\alpha}-\frac{\eta_2^2}{\xi_2^2} &(\alpha+1)(|\xi_1+\xi_2|^{\alpha} - |\xi_1|^{\alpha}) -\frac{(\eta_1+\eta_2)^2}{(\xi_1+\xi_2)^2} +\frac{\eta_1^2}{\xi_1^2}\\
			\eta_1\xi_2(\xi_1+\xi_2) & \eta_2\xi_1(\xi_1+\xi_2) & \xi_2(\eta_2\xi_1-\eta_1\xi_2)\\
			1 & 1&0
		\end{vmatrix}\\
		&=
		\begin{vmatrix}
			(\alpha+1)(|\xi_1|^{\alpha}-|\xi_2|^{\alpha}) -\frac{\eta_1^2}{\xi_1^2} +\frac{\eta_2^2}{\xi_2^2}& (\alpha+1)|\xi_2|^{\alpha}-\frac{\eta_2^2}{\xi_2^2} & (\alpha+1)(|\xi_1+\xi_2|^{\alpha} - |\xi_1|^{\alpha}) -\frac{(\eta_1+\eta_2)^2}{(\xi_1+\xi_2)^2} +\frac{\eta_1^2}{\xi_1^2}\\
			(\eta_1\xi_2-\eta_2\xi_1)(\xi_1+\xi_2) & \eta_2\xi_1(\xi_1+\xi_2) & \xi_2(\eta_2\xi_1-\eta_1\xi_2)\\
			0 & 1& 0
		\end{vmatrix}\\
		&=-(\eta_1\xi_2-\eta_2\xi_1)
		\begin{vmatrix}
			(\alpha+1)(|\xi_1|^{\alpha} -|\xi_2|^{\alpha}) +\frac{\eta_2^2}{\xi_2^2} -\frac{\eta_1^2}{\xi_1^2} & (\alpha+1)(|\xi_1+\xi_2|^{\alpha}- |\xi_1|^{\alpha}) +\frac{(\eta_1+\eta_2)^2}{(\xi_1+\xi_2)^2}\\
			\xi_1+\xi_2 & -\xi_2
		\end{vmatrix}\\
		&=-(\eta_1\xi_2-\eta_2\xi_1) \Big ((\alpha+1)(|\xi_1|^{\alpha}\xi_1 +|\xi_2|^{\alpha}\xi_2 - |\xi_1+\xi_2|^{\alpha}(\xi_1+\xi_2)) -\frac{(\eta_1\xi_2-\eta_2\xi_1)^2}{\xi_1\xi_2(\xi_1+\xi_2)}\Big).
	\end{split}
\end{equation*}
This gives
\begin{equation*}
B = -\frac{2 (\eta_1\xi_2-\eta_2\xi_1)}{\xi_1 \xi_2 (\xi_1+\xi_2)} \Big ((\alpha+1)(|\xi_1|^{\alpha}\xi_1 +|\xi_2|^{\alpha}\xi_2 - |\xi_1+\xi_2|^{\alpha}(\xi_1+\xi_2)) -\frac{(\eta_1\xi_2-\eta_2\xi_1)^2}{\xi_1\xi_2(\xi_1+\xi_2)}\Big).
\end{equation*}
\end{footnotesize}

From \eqref{eq:DyadicLocalizationAppendix}, we have that the first factor is
\begin{equation*}
    \Big| \frac{\eta_1 \xi_2 - \eta_2 \xi_1}{\xi_1 \xi_2 (\xi_1 + \xi_2)} \Big| \sim 1.
\end{equation*}

For the second factor observe that
\begin{equation*}
\text{sgn}( (\alpha+1)(|\xi_1|^{\alpha}\xi_1 +|\xi_2|^{\alpha}\xi_2 - |\xi_1+\xi_2|^{\alpha}(\xi_1+\xi_2))) = \text{sgn}( -\frac{(\eta_1\xi_2-\eta_2\xi_1)^2}{\xi_1\xi_2(\xi_1+\xi_2)}).
\end{equation*}
Moreover, by the resonance condition it holds that
\begin{equation*}
\big| (\alpha+1)(|\xi_1|^{\alpha}\xi_1 +|\xi_2|^{\alpha}\xi_2 - |\xi_1+\xi_2|^{\alpha}(\xi_1+\xi_2)) \big| \sim \big| \frac{(\eta_1\xi_2-\eta_2\xi_1)^2}{\xi_1\xi_2(\xi_1+\xi_2)} \big|.
\end{equation*}
The first factor is estimated by the mean-value theorem:
\begin{equation*}
\big| (\alpha+1)(|\xi_1|^{\alpha}\xi_1 +|\xi_2|^{\alpha}\xi_2 - |\xi_1+\xi_2|^{\alpha}(\xi_1+\xi_2)) \big| \sim |\xi_2| \sim \frac{N_{\min}}{N_{\max}}.
\end{equation*}

Hence, the size of this determinant becomes
\begin{equation*}
	B \sim \frac{N_{\min}}{N_{\max}}
\end{equation*}
in the resonant case. $\hfill \Box$

\section*{Acknowledgements}
A.S. gratefully acknowledges financial support by IRTG 2235 during the initial stage of the project.
R.S. acknowledges financial support by the German Research Foundation (DFG) -- Project-Id 258734477 -- SFB 1173.  The second author would like to thank the Institut de Mathématique d'Orsay for kind hospitality, where part of this work was carried out, and  Jean-Claude Saut for explaining the derivation of KP-I equations. 


\bibliographystyle{abbrv}

	\end{document}